\newcommand{\A}{\ensuremath{\mathcal{A}}}
\newcommand{\Mat}{\ensuremath{\mathbb{M}}}
\newcommand{\R}{\ensuremath{\mathbb{R}}}
\newcommand{\C}{\ensuremath{\mathbb{C}}}
\newcommand{\T}{\ensuremath{\mathbb{T}}}
\newcommand{\tr}{\ensuremath{\mathop{\rm Tr\,}\nolimits}}
\newcommand{\clos}{\ensuremath{\mathop{\rm closure\,}\nolimits}}
\renewcommand{\leq}{\ensuremath{\leqslant}}
\renewcommand{\geq}{\ensuremath{\geqslant}}
\renewcommand{\l}{\lambda}
\renewcommand{\O}{{\Omega}}
\renewcommand{\o}{{\omega}}
\newcommand\be{\begin{align}}
\newcommand\ee{\end{align}}
\newcommand{\norm}[1]{ \| #1  \|}
\newcommand{\bnorm}[1]{ \big\| #1  \big\|}
\newcommand{\Bnorm}[1]{ \Big\| #1  \Big\|}
\newcommand{\bgnorm}[1]{ \bigg\| #1  \bigg\|}
\newcommand{\Bgnorm}[1]{ \Bigg\| #1  \Bigg\|}
\newcommand{\xra}{\xrightarrow}
\newcommand{\otpb}{\ot_{\gamma}}
\newcommand{\otp}{\widehat{\ot}}
\newcommand{\ot}{\otimes}
\def\Im{{\rm Im} \, }
\def\Ker{{\rm Ker} \, }
\def\min{{\rm min} }
\def\fin{{\rm fin} }
\def\card{{\rm card} }
 \newtheorem{thm}{Theorem}[section]
 \newtheorem{lemma}[thm]{Lemma}
 \newtheorem{prop}[thm]{Proposition}
 \theoremstyle{definition}
 \theoremstyle{remark}
 \newtheorem{remark}[thm]{Remark}
 \numberwithin{equation}{section}
\begin{document}

\title[Noncommutative Fig\`a-Talamanca-Herz algebras]
 {Noncommutative Fig\`a-Talamanca-Herz\\ algebras for Schur multipliers}

\date{}
\author{C\'edric Arhancet }
\address{%
Laboratoire de Math\'ematiques\\
Universit\'e de Franche-Comt\'e\\
25030 Besan\c{c}on Cedex\\
France}

\email{cedric.arhancet@univ-fcomte.fr}

\thanks{ This work is partially supported by ANR 06-BLAN-0015.}

\subjclass{ Primary 46L51; Secondary, 46M35, 46L07}

\keywords{Fig\`a-Talamanca-Herz algebra, noncommutative
$L_p$-spaces, complex interpolation, Schur multipliers, operator
spaces.}

\date{January 1, 2011}

\begin{abstract}
In this work, we introduce a noncommutative analogue of the
Fig\`a-Talamanca-Herz algebra $A_p(G)$ on the natural predual of the
operator space $\frak{M}_{p,cb}$ of completely bounded Schur
multipliers on the Schatten space $S_p$. We determine the isometric
Schur multipliers and prove that the space $\frak{M}_{p}$ of bounded
Schur multipliers on the Schatten space $S_p$ is the closure in the
weak operator topology of the span of isometric multipliers.
\end{abstract}

\maketitle


\section{Introduction}


The Fourier algebra $A(G)$ of a locally compact group $G$ was
introduced by P. Eymard in \cite{Eym}. The algebra $A(G)$ is the
predual of the group von Neumann algebra $VN(G)$. If $G$ is abelian
with dual group $\widehat{G}$, then the fourier transform induces an
isometric isomorphism of $L_1\big(\widehat{G}\big)$ onto $A(G)$. In
\cite{Fig}, A. Fig\`a-Talamanca showed, if $G$ is abelian, that the
natural predual of the Banach space of the bounded Fourier
multipliers on $L_p(G)$ is isometrically isomorphic to a space
$A_p(G)$ of continuous functions on $G$. Moreover $A_2(G)=A(G)$
isometrically. In \cite{Her} and \cite{Eym}, C. Herz proved that the
space $A_p(G)$ is a Banach algebra for the usual product of
functions (see also \cite{Pie}). Hence $A_p(G)$ is an $L_p$-analogue
of the Fourier algebra $A(G)$. These algebras are called
Fig\`a-Talamanca-Herz algebras. In \cite{Run1}, V. Runde introduced
an operator space analogue $OA_p(G)$ of the algebra $A_p(G)$. The
underlying Banach space of $OA_p(G)$ is different from the Banach
space $A_p(G)$. Moreover, it is possible to show (in using a
suitable variant of \cite[Theorem 5.6.1]{Lar}) that $OA_p(G)$ is the
natural predual of the operator space of the completely bounded
Fourier multipliers. We refer to \cite{Daw1}, \cite{Daw2},
\cite{Lam} and \cite{Run2} for other operator space analogues of
$A_p(G)$.

The purpose of this article is to introduce noncommutative analogues
of these algebras in the context of completely bounded Schur
multipliers on Schatten spaces $S_p$. Recall that a map $T \colon
S_p \to S_p$ is completely bounded if $Id_{S_p} \ot T$ is bounded on
$S_p(S_p)$. If $1 \leq p < \infty$, the operator space $CB(S_p)$ of
completely bounded maps from $S_p$ into itself is naturally a dual
operator space. Indeed, we have a completely isometric isomorphism
$CB(S_p)=\big(S_p \otp S_{p^*}\big)^*$ where $\otp$ denote the
operator space projective tensor product. Moreover, we will prove
that the subspace $\frak{M}_{p,cb}$ of completely bounded Schur
multipliers is a maximal commutative subset of $CB(S_p)$.
Consequently, the subspace $\frak{M}_{p,cb}$ is w*-closed in
$CB(S_p)$. Hence $\frak{M}_{p,cb}$ is naturally a dual operator
space with $\frak{M}_{p,cb}=\big(S_p \otp
S_{p^*}/(\frak{M}_{p,cb})_{\perp}\big)^*$. If we denote by
$\psi_p\colon S_p \otp S_{p^*}\to S_1$ the map $(A,B) \mapsto A*B$,
where $*$ is the Schur product, we will show that
$(\frak{M}_{p,cb})_{\perp}=\Ker \psi_p$. Now, we define the operator
space $\frak{R}_{p,cb}$ as the space $\Im \psi_p$ equipped with the
operator space structure of $S_p \otp S_{p^*}/\Ker \psi_p$. We have
completely isometrically
$\big(\frak{R}_{p,cb}\big)^*=\frak{M}_{p,cb}$. Moreover, by
definition, we have a completely contractive inclusion
$\frak{R}_{p,cb} \subset S_1$. Recall that elements of $S_1$ can be
regarded as infinite matrices. Our principal result is the following
theorem.
\begin{thm}
Suppose $1 \leq p < \infty$. The predual $\frak{R}_{p,cb}$ of the
operator space $\frak{M}_{p,cb}$ equipped with the usual matricial
product or the Schur product is a completely contractive Banach
algebra.
\end{thm}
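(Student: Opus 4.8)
The plan is to show that $\frak{R}_{p,cb}$, which lives as $\Im \psi_p$ with the quotient operator space structure of $S_p \otp S_{p^*}/\Ker \psi_p$, carries a completely contractive multiplication for each of the two candidate products. Since we already have the complete isometry $(\frak{R}_{p,cb})^* = \frak{M}_{p,cb}$, the cleanest route is to dualize: a multiplication $m \colon \frak{R}_{p,cb} \ot \frak{R}_{p,cb} \to \frak{R}_{p,cb}$ is completely contractive precisely when its adjoint $m^* \colon \frak{M}_{p,cb} \to \frak{M}_{p,cb} \otp \frak{M}_{p,cb}$ (or rather into the appropriate dual tensor product) is completely contractive, so I would identify the comultiplication dual to the product and verify its complete contractivity.

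First I would pin down the multiplication concretely. Because $\frak{R}_{p,cb} \subset S_1$ is a completely contractive inclusion and elements are infinite matrices, the Schur product $*$ and the matricial product $\cdot$ are both defined entrywise or by the usual matrix formula on these matrices. The key structural fact to exploit is that $\psi_p(A \ot B) = A * B$, so the image is literally built from Schur products; I would check that $\frak{R}_{p,cb}$ is closed under each product and that the resulting map is well-defined on the quotient (i.e. respects $\Ker \psi_p$). For the Schur product this should be essentially immediate from the entrywise description, whereas for the matricial product I would need to see that the product of two elements of $\Im\psi_p$ again lies in $\Im\psi_p$.

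The heart of the argument is the complete contractivity estimate, and here I would reduce everything to the operator space projective tensor product $S_p \otp S_{p^*}$ upstairs, using its defining universal property (a map out of $\otp$ is completely bounded with a given norm iff the associated completely bounded bilinear map has that norm). Concretely, I expect to factor the product through $\psi_p$: given representing elements in $S_p \otp S_{p^*}$ whose images are the two factors, I would exhibit a single element of $S_p \otp S_{p^*}$ mapping to their product, and bound its projective norm by the product of the norms of the representatives. For the Schur product this factorization is natural because $(A*B)*(C*D) = (A*C)*(B*D)$, so one forms the element $(A*C) \ot (B*D)$ and invokes the known complete contractivity of the Schur product $S_p \times S_p \to S_p$ (and dually on $S_{p^*}$) to control the projective norm. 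For the matricial product one instead uses $(A*B)(C*D)$ and rearranges via the bilinearity of matrix multiplication together with the complete boundedness of multiplication on the Schatten scale.

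The main obstacle I anticipate is the matricial-product case: unlike the Schur product, ordinary matrix multiplication does not factor so transparently through the tensor legs, so producing the correct preimage in $S_p \otp S_{p^*}$ and controlling its $\otp$-norm will require a more careful bilinear factorization, probably passing through an intermediate identification of $\frak{R}_{p,cb}$ that makes the row/column structure visible. I would handle this by writing the matrix product as a composition of the two completely bounded legs and invoking the fact (available from the discussion above, where $\frak{M}_{p,cb}$ is realized inside $CB(S_p)$) that Schur multipliers compose multiplicatively, then transporting this back through the complete isometry $(\frak{R}_{p,cb})^* = \frak{M}_{p,cb}$. Once the complete contractivity of each product map $\frak{R}_{p,cb} \otp \frak{R}_{p,cb} \to \frak{R}_{p,cb}$ is established, associativity is inherited from the associativity of the Schur and matricial products on matrices, and the proof is complete.
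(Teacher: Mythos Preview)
Your proposal has genuine gaps in both cases, and the paper's route is quite different from what you sketch.

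For the Schur product, your idea is to lift via $(A*B)*(C*D)=(A*C)*(B*D)$ and then form $(A*C)\ot(B*D)\in S_p\otp S_{p^*}$. To control the $\otp$-norm of this preimage you would need the Schur product maps $S_p\otp S_p\to S_p$ and $S_{p^*}\otp S_{p^*}\to S_{p^*}$ to be \emph{completely} contractive; the paper only establishes this for $S_p\times S_{p^*}\to S_1$, and the $S_p\times S_p\to S_p$ version is neither proved nor obviously available. At the Banach level your idea does work and is exactly how the paper proves the (non-cb) Proposition that $\frak{R}_p^I$ is a Banach algebra for $*$, but it does not upgrade to the operator space statement.

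For the matricial product your dualization is misformulated: the adjoint of a multiplication $\frak{R}_{p,cb}\otp\frak{R}_{p,cb}\to\frak{R}_{p,cb}$ lands in $\big(\frak{R}_{p,cb}\otp\frak{R}_{p,cb}\big)^*=CB(\frak{R}_{p,cb},\frak{M}_{p,cb})$, not in $\frak{M}_{p,cb}\otp\frak{M}_{p,cb}$, so ``identify the comultiplication and check it is completely contractive'' is not a well-posed plan. Your fallback suggestion, that composition of Schur multipliers should somehow transport to a product on the predual, confuses the algebra structure on $\frak{M}_{p,cb}$ (composition, i.e.\ Schur product of symbols) with the dual of the \emph{matricial} product on $\frak{R}_{p,cb}$; these are different operations.

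What the paper actually does is pass through the \emph{doubled} index set. It builds, for each product, an explicit linear map on symbols ($\Delta$ for the matricial product, $\eta$ for the Schur product) so that $M_{\Delta(A)}$ (resp.\ $M_{\eta(A)}$) on $S_p^I(S_p^I)$ factors as $V\,(M_A\ot Id_{S_p^I})\,W$ with $V,W$ concrete complete contractions on $S_p^I(S_p^I)$. Together with the complete contraction $M_A\mapsto M_A\ot Id$ into $CB(S_p^I(S_p^I))$, this shows $M_A\mapsto M_{\Delta(A)}$ (resp.\ $M_{\eta(A)}$) is a w*-continuous complete contraction $\frak{M}_{p,cb}^I\to\frak{M}_{p,cb}^{I\times I}$. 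Its preadjoint $\frak{R}_{p,cb}^{I\times I}\to\frak{R}_{p,cb}^I$ sends $A\ot B$ to $AB$ (resp.\ $A*B$). A separate, nontrivial proposition provides a canonical complete contraction $\frak{R}_{p,cb}^I\otp\frak{R}_{p,cb}^I\to\frak{R}_{p,cb}^{I\times I}$; composing gives the completely contractive multiplication. The ingredients you are missing are precisely this passage to $I\times I$, the explicit $V,W$ factorizations (which are where the hard estimate hides), and the link $\frak{R}_{p,cb}^I\otp\frak{R}_{p,cb}^I\to\frak{R}_{p,cb}^{I\times I}$.
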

In \cite{Str} and \cite{Par}, R. S. Strichartz and S. K. Parott
showed that if $1 \leq p \leq\infty$, $p\not=2$ every isometric
Fourier multiplier on $L_p(G)$ is a scalar multiple of an operator
induced by a translation. In \cite{Fig}, A. Fig\`a-Talamanca showed
that the space of bounded Fourier multipliers is the closure in the
weak operator topology of the span of these operators. We give
noncommutative analogues of these two results.
\begin{thm}
\begin{enumerate}
  \item  Suppose $1 \leq p \leq\infty $. If $p \not=2$, an isometric Schur multiplier on $S_p$ is defined by a matrix $[a_ib_j]$ with
        $a_i,b_j \in \mathbb{T}$.
  \item  Suppose $1 \leq p < \infty $. The space $\frak{M}_{p}$ of bounded Schur multipliers on $S_p$ is the closure of the span of isometric Schur
         multipliers in the weak operator topology.
\end{enumerate}
\end{thm}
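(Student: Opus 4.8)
The plan is to prove the two assertions separately, reducing the first to a computation on $2\times 2$ blocks and the second to a Hahn--Banach argument against the weakly continuous functionals.

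For assertion (1), I would first record the easy direction: a symbol $[a_ib_j]$ with $a_i,b_j\in\T$ induces the map $A\mapsto D_aAD_b$ with $D_a,D_b$ diagonal unitaries, which is isometric on every $S_p$ by unitary invariance of the Schatten norms. For the converse, suppose $T_M$ is an isometric Schur multiplier with symbol $M=[m_{ij}]$. Applying $T_M$ to the matrix units $E_{ij}$, each of norm $1$ in every $S_p$, and using $T_M(E_{ij})=m_{ij}E_{ij}$, gives $|m_{ij}|=1$ for all $i,j$. Next I would localise: for any two rows $i,k$ and two columns $j,l$, the subspace of $S_p$ of matrices supported on $\{i,k\}\times\{j,l\}$ is isometrically the $2\times 2$ Schatten class $S_p^2$ and is invariant under $T_M$, the restriction being the Schur multiplier by the corresponding submatrix. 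Since an isometry restricted to an invariant subspace is again an isometry, this submatrix defines an isometric multiplier on $S_p^2$.

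It then suffices to treat the $2\times 2$ case. Composing on both sides with the isometric diagonal-unitary multipliers, I may normalise the submatrix to $\left(\begin{smallmatrix}1&1\\1&\lambda\end{smallmatrix}\right)$ with $|\lambda|=1$, where $\lambda=m_{ij}m_{kl}(m_{il}m_{kj})^{-1}$, and the goal is to force $\lambda=1$. Testing on the rank-one matrix $A=\left(\begin{smallmatrix}1&1\\1&1\end{smallmatrix}\right)$, whose only nonzero singular value is $2$, one computes that $T_M(A)$ has $s_1^2+s_2^2=4$ and $s_1s_2=|\lambda-1|$. Among all pairs with $s_1^2+s_2^2=4$ the quantity $s_1^p+s_2^p$, respectively $\max(s_1,s_2)$ when $p=\infty$, attains the value $2^p$ only at the extreme point $(2,0)$ when $p\neq 2$; hence $\|T_M(A)\|_{S_p}=\|A\|_{S_p}=2$ forces $s_2=0$, i.e. $\lambda=1$. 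This yields $m_{ij}m_{kl}=m_{il}m_{kj}$ for all quadruples, and setting $b_j:=m_{1j}$, $a_i:=m_{i1}m_{11}^{-1}$ gives $m_{ij}=a_ib_j$ with $a_i,b_j\in\T$. The exclusion of $p=2$ is essential, since there every unimodular symbol is isometric.

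For assertion (2) I would argue by duality, using only the multipliers $T_{[a\otimes b]}\colon X\mapsto D_aXD_b$, which are isometric for every $p$ (so density of their span suffices for all $1\le p<\infty$, and by (1) they exhaust the isometric multipliers when $p\neq 2$). Since the weak operator topology is the locally convex topology induced by the functionals $T\mapsto\tr(T(A)B)$ with $A\in S_p$, $B\in S_{p^*}$, by Hahn--Banach it is enough to show that any finite sum $\phi(T)=\sum_{k}\tr(T(A_k)B_k)$ vanishing on every $T_{[a\otimes b]}$ vanishes on all of $\frak M_p$. Evaluating, the function $f(a,b):=\phi(T_{[a\otimes b]})=\sum_{ij}a_ib_j\,N_{ij}$, with $N_{ij}:=\sum_k (A_k)_{ij}(B_k)_{ji}$, is a bounded continuous function on the compact abelian group $\O=\T^{\nn}\times\T^{\nn}$, and its Fourier coefficient at the character $(a,b)\mapsto a_ib_j$ equals $N_{ij}$. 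Thus if $\phi$ annihilates all these multipliers, then $f\equiv 0$, every Fourier coefficient vanishes, $N_{ij}=0$ for all $i,j$, and consequently $\phi(T_M)=\sum_{ij}m_{ij}N_{ij}=0$ for every bounded symbol $M$, which is exactly the asserted weak operator density.

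The main obstacle is the rigorous justification of this Fourier extraction, since for $N$ built from $S_p$ and $S_{p^*}$ factors the double sum $\sum_{ij}a_ib_jN_{ij}$ need not converge absolutely and the interchange of the Haar integral $\int_\O \overline{a_ib_j}\,f(a,b)$ with the trace is not automatic. I would handle this by approximating each $A_k$ by finite-rank matrices $A_k^{F}$ in the $S_p$-norm, which is possible for all $p<\infty$; the estimate $\bigl|\tr\bigl(D_a(A_k-A_k^{F})D_bB_k\bigr)\bigr|\le \|A_k-A_k^{F}\|_{S_p}\|B_k\|_{S_{p^*}}$ is uniform in $(a,b)$ because $D_a,D_b$ are unitary, so each Fourier-coefficient computation reduces to a finite sum where the identities are elementary, and the same uniform estimate shows that $f$ is genuinely continuous on $\O$, closing the argument.
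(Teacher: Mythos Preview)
Your proof is correct, but both parts take a different route from the paper.

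For part (1), the paper does not argue via $2\times 2$ blocks. Instead it first observes that an isometric Schur multiplier $M_C$ is automatically onto (each $e_{ij}$ lies in the range since $c_{ij}\neq 0$), and then invokes Arazy's description of the surjective isometries of $S_p$: any such map is $A\mapsto UAV$ or $A\mapsto UA^{T}V$ for unitaries $U,V$. Testing these two forms on matrix units forces $U,V$ to be diagonal in the first case and yields a contradiction in the second, giving the rank-one form of $C$ directly. Your argument is genuinely more elementary: it avoids Arazy's theorem entirely, replacing it by the single singular-value computation on $\left(\begin{smallmatrix}1&1\\1&\lambda\end{smallmatrix}\right)$. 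The trade-off is that the paper's route would generalise immediately to any class of isometries for which an Arazy-type classification is available, whereas yours is tailored to Schur multipliers but is self-contained.

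For part (2), the paper also argues by duality, but using the concrete predual $\frak{R}_p$ built earlier: a matrix $C\in\frak{R}_p$ orthogonal to every isometric multiplier satisfies $\lim_{J}\sum_{i,j\in J}a_ib_jc_{ij}=0$ for all unimodular $(a_i),(b_j)$, and the paper isolates $c_{i_0j_0}$ by the four-term sign combination $\sum(a_i+a'_i)(b_j+b'_j)c_{ij}=4c_{i_0j_0}$ with $a'_i=-1$ for $i\neq i_0$, $a'_{i_0}=1$, and similarly for $b'$. This purely algebraic trick replaces your Haar-integration extraction of Fourier coefficients on $\T^{\nn}\times\T^{\nn}$. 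Both methods prove the same thing (weak*-density in $\frak{M}_p$, whence WOT-density since $\frak{M}_p$ is its own commutant and hence WOT-closed); the paper's sign-flip is shorter and sidesteps your uniform-approximation step, while your Fourier formulation makes the group-theoretic analogy with the Fig\`a-Talamanca result more transparent.
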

The paper is organized as follows.

In \S2, we fix notations and we show that the natural preduals of
$\frak{M}_{p}$ and $\frak{M}_{p,cb}$ admit concrete realizations as spaces of matrices. We
give elementary properties of these spaces.

In \S3, we show that the operator space $\frak{R}_{p,cb}$ equipped
with the matricial product is a completely contractive Banach
algebra.

In \S4, we turn to the Schur product. We observe that the natural
predual $\frak{R}_p$ of the Banach space $\frak{M}_{p}$ of bounded
Schur multipliers is a Banach algebra for the Schur product.
Moreover, we show that the space $\frak{R}_{p,cb}$ equipped with the
Schur product is a completely contractive Banach algebra.

In \S5, we determine the isometric Schur multipliers on $S_p$ and
prove that the space $\frak{M}_{p}$ is the closure in the weak
operator topology of the span of isometric multipliers.


\section{Preduals of spaces of Schur multipliers}

Let us recall some basic notations. Let $\T=\{z \in \C\ |\ |z|=1 \}$
and $\delta_{ij}$ the symbol of Kronecker.

If $E$ and $F$ are Banach spaces, $B(E,F)$ is the space of bounded
linear maps between $E$ and $F$.  We denote by $\otpb$ the Banach
projective tensor product. If $E,F$ and $G$ are Banach spaces we
have $(E\otpb F)^*=B(E,F^*) $ isometrically. In particular, if $E$
is a dual Banach space, $B(E)$ is also a dual Banach space. If
$(E_0,E_1)$ is a compatible couple of Banach spaces we denote by
$(E_0,E_1)_\theta$ the intermediate space obtained by complex
interpolation between $E_0$ and $E_1$.

The readers are refereed to \cite{BLM}, \cite{ER1}, \cite{Pau} and
\cite{Pis4} for the details on operator spaces and completely
bounded maps. We let $CB(E,F)$ for the space of all completely
bounded maps endowed with the norm
$$
\norm{T}_{E\xra{}F,cb}=\sup_{n\geq 1} \bnorm{Id_{M_n} \ot
u}_{M_n(E)\xra{}M_n(F)}.
$$
When $E$ and $F$ are two operator spaces,
$CB(E,F)$ is an operator space for the structure corresponding to
the isometric identifications
$M_n\big(CB(E,F)\big)=CB\big(E,M_n(F)\big)$. The dual operator space
of $E$ is $E^*=CB(E,\C)$. If $E$ and $F$ are operator spaces then
the adjoint map $T\mapsto T^*$ from $CB(E,F)$ into $CB(F^*,E^*)$ is
a complete isometry.

If $I$ is a set, we denote by $C_{I}$ the operator space
$B\big(\C,\ell_2^I\big)$ and by $R_{I}$ the operator space
$B\big(\overline{\ell_2^I},\C\big)$. We have a complete isometry
$B\big(\ell_2^I\big)=CB\big(C_{I}\big)$ (see \cite[(1.14)]{BLM}).

The complex interpolated space between two compatible operator
spaces $E_0$ and $E_1$ is the usual Banach space $E_\theta$ with the
matrix norms corresponding to the isometric identifications
$M_n(E_\theta)=\big(M_n(E_0),M_n(E_1)\big)_{\theta}$. Let $F_0,F_1$
be two other compatible operator spaces. Let $\varphi\colon E_0+E_1
\to F_0+F_1$ be a linear map. If $\varphi$ is completely bounded as
a map from $E_0$ into $F_0$, and from $E_1$ into $F_1$, then, for
any $0\leq \theta\leq 1$, $\varphi$ is completely bounded from
$E_\theta$ into $F_\theta$ with
\begin{equation*}
\norm{\varphi}_{cb,E_\theta\xra{}F_\theta} \leq
\big(\norm{\varphi}_{cb,E_0\xra{}F_0}\big)^{1-\theta}\big(\norm{\varphi}_{cb,E_1\xra{}F_1}\big)^{\theta}.
\end{equation*}
If $E_0 \cap E_1$ is dense in both $E_0$ and $E_1$, we have a
completely contractive inclusion
$$
\big(CB(E_0),CB(E_1)\big)_\theta \subset CB(E_\theta)
$$
(see \cite[Lemma 0.2]{Har}).

We denote by $\otp$ the operator space projective tensor product, by
$\ot_{\min}$ the operator space minimal tensor product, by $\ot_h$
the Haagerup tensor product, by $\ot_{\sigma h}$ the normal Haagerup
tensor product, by $\overline{\ot}$ the normal spatial tensor
product, by $\ot_{w^*h}$ the weak* Haagerup tensor product and by
$\ot_{eh}$ the extended Haagerup tensor product (see \cite{BLM},
\cite{ER2} and \cite{Spr}). Suppose that $E,F,G$ and $H$ are
operator spaces. If $\varphi\colon E\to F$ and $\psi\colon G\to H$
are completely bounded maps then the maps $\varphi\ot \psi\colon E
\ot_h G\to F \ot_h H$ and $\varphi\ot \psi\colon E \otp G \to F \otp
H$ are completely bounded and we have
\begin{equation*}
\norm{\varphi\ot \psi}_{cb,E \ot_h G\xra{}F \ot_h H}\leq
\norm{\varphi}_{cb,E\to F}\norm{\psi}_{cb,G\to H}
\end{equation*}
and
\begin{equation*}
\norm{\varphi\ot \psi}_{cb,E \otp G\xra{}F \otp H}\leq
\norm{\varphi}_{cb,E\to F}\norm{\psi}_{cb,G\to H}.
\end{equation*}

If $E,F$ are operator spaces, we have $E\ot_h F \subset E
\ot_{w^*h}F$ completely isometrically (see \cite{BLM} page 43).

If $E,F$ and $G$ are operator spaces, we denote by $CB(E\times F,G)$
the space of jointly completely bounded map. We have
$$
CB(E\times F,G)=CB\big(E\otp F,G\big)=CB\big(E,CB(F,G)\big)
$$
completely isometrically. Consequently, we have $\big(E\otp
F\big)^*=CB(E,F^*)$ completely isometrically. In particular, if $E$
is a dual operator space, $CB(E)$ is also a dual operator space.

At several times, we will use the next easy lemma left to the
reader.
\begin{lemma}
\label{lemmaVW} Suppose $E$ and $F$ are operator spaces. Let
$V\colon E\to F$ and $W\colon F \to E$ be any completely contractive
maps. Then the map
$$
\begin{array}{cccc}
  \Theta_{V,W}:  &  CB(E)   &  \longrightarrow   & CB(F)   \\
    &  T    &  \longmapsto       &  VTW  \\
\end{array}
$$
is completely contractive. Moreover, if $E$ and $F$ are reflexive
then this map is also w*-continuous.
\end{lemma}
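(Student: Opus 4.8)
The plan is to treat the two assertions separately, since the complete contractivity holds in full generality while the w*-continuity uses reflexivity. For the complete contractivity I would exploit the canonical identification $M_n\big(CB(E)\big)=CB\big(E,M_n(E)\big)$ recalled in the preliminaries. An element $[T_{ij}]\in M_n\big(CB(E)\big)$ corresponds to the map $\widetilde T\colon E\to M_n(E)$, $x\mapsto [T_{ij}(x)]$, with $\norm{\widetilde T}_{cb}=\norm{[T_{ij}]}_{M_n(CB(E))}$. Applying $Id_{M_n}\ot \Theta_{V,W}$ produces $[VT_{ij}W]$, which under the identification $M_n\big(CB(F)\big)=CB\big(F,M_n(F)\big)$ is exactly the composition $(Id_{M_n}\ot V)\circ\widetilde T\circ W\colon F\to M_n(F)$, as one checks by evaluating at $y\in F$. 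Since $W$ is completely contractive and $Id_{M_n}\ot V$ is completely contractive (amplifying a complete contraction by the identity of $M_n$ does not increase the cb norm), the submultiplicativity of the cb norm under composition gives $\norm{[VT_{ij}W]}_{M_n(CB(F))}\le \norm{[T_{ij}]}_{M_n(CB(E))}$. As this holds for every $n\ge 1$, the map $\Theta_{V,W}$ is completely contractive.

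For the w*-continuity under the reflexivity hypothesis, I would identify the predual map explicitly. Since $E$ and $F$ are reflexive, the identifications $\big(E\otp E^*\big)^*=CB(E,E^{**})=CB(E)$ and $\big(F\otp F^*\big)^*=CB(F)$ turn $CB(E)$ and $CB(F)$ into dual operator spaces. Tracking the dualities, for $T\in CB(E)$ and $y\ot\eta\in F\otp F^*$ one computes
\begin{equation*}
\la \Theta_{V,W}(T),y\ot\eta\ra=\eta\big(VT(Wy)\big)=(V^*\eta)\big(T(Wy)\big)=\la T,Wy\ot V^*\eta\ra,
\end{equation*}
so that $\Theta_{V,W}$ is precisely the adjoint of the map $W\ot V^*\colon F\otp F^*\to E\otp E^*$, where $V^*\colon F^*\to E^*$ is the adjoint of $V$. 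Being an adjoint of a bounded map between the preduals, $\Theta_{V,W}$ is automatically w*-continuous.

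There is no genuine difficulty here; both parts are routine verifications, which is why the statement is flagged as easy. The only points requiring care are the correct bookkeeping in the two canonical identifications (in particular keeping the order $VTW$ and placing $Id_{M_n}\ot V$ on the left and $W$ on the right), and, in the second part, checking that the predual candidate is genuinely $W\ot V^*$: the adjoint $V^*$ rather than $V$ appears because $V$ sits on the range side of the pairing. The resulting complete contractivity of $W\ot V^*$ then follows from the complete isometry of the adjunction $T\mapsto T^*$ together with the functoriality of the operator space projective tensor product for completely bounded maps, both recorded in the preliminaries, so the argument closes at once.
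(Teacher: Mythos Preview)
Your argument is correct on both counts. The paper does not supply a proof of this lemma; it is introduced with the words ``easy lemma left to the reader'', so there is nothing to compare against beyond confirming that your verification fills the gap as intended. Your use of the identification $M_n\big(CB(E)\big)=CB\big(E,M_n(E)\big)$ to factor $[VT_{ij}W]$ as $(Id_{M_n}\ot V)\circ\widetilde T\circ W$ is the natural route for the complete contractivity, and your identification of $W\ot V^*$ as the predual map is exactly the standard way to certify w*-continuity under reflexivity.
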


A Banach algebra $\A$ equipped with an operator space structure is
called completely contractive if the algebra product $(a,b)\xra{}ab$
from $\A \times \A$ to $\A$ is a jointly completely contractive
bilinear map.

We equip $\ell_\infty^{I}$ with its natural operator space structure
coming from its structure as a $C^*$-algebra and the Banach space
$\ell_1^I$ with its natural operator space structure coming from its
structure of predual of $\ell_\infty^I$.

If $I$ is an index set and if $E$ is a vector space, we write
$\Mat_I(E)$ for the space of the $I \times I$ matrices with entries
in $E$. We denote by $\Mat^{\fin}_{I}(E)$ the subspace of matrices
with a finite number of non null entries. For $I={\{1,\ldots,n\}}$,
we simplify the notations, we let $M_n(E)$ for
$\Mat_{\{1,\ldots,n\}}(E)$. We write $\Mat_{\fin}$ for
$\Mat_{\mathbb{N}}^{\fin}(\mathbb{C})$. We use the inclusion $\Mat_I
\ot \Mat_I \subset \Mat_{I \times I}$ with the identification $[A
\ot B]_{(t,r),(u,s)}=a_{tu}b_{rs}$. For all $i,j,k,l\in I$, the
tensor $e_{ij} \ot e_{kl}$ identifies to the matrix
$[\delta_{it}\delta_{ju}\delta_{kr}\delta_{ls}]_{(t,r),(u,s) \in
I\times I} $ (see \cite{ER1} page 5 for more information on these
identifications).

Given a set $I$, the set $\mathcal{P}_f(I)$ of all finite subsets of
$I$ is directed with respect to set inclusion. For $J\in
\mathcal{P}_f(I)$ and $A\in \Mat_{I}$, we write $\mathcal{T}_{J}(A)$
for the matrix obtained from $A$ by setting each entry to zero if
its row and column index are not both in $J$. We call
$\big(\mathcal{T}_{J}(A)\big)_{J \in \mathcal{P}_f(I)}$ the net of
finite submatrices of $A$.

The Schatten-von Neumann class $S_p^{I}$, $1 \leq p< \infty$, is the
space of those compact operators $A$ from $\ell_2^I$ into $\ell_2^I$
such that
$\norm{A}_{S_p^{I}}=\big(\tr(A^*A)^{\frac{p}{2}}\big)^{\frac{1}{p}}<\infty$.
The space $S_\infty^{I}$ of compact operators from $\ell_2^I$ into
$\ell_2^I$ is equipped with the operator norm. For $I=\mathbb{N}$,
we simplify the notations, we let $S_p$ for $S_p^{\mathbb{N}}$. The
space $S_\infty^{I}\big(S_\infty^{K}\big)$ of compact operators from
$\ell_2^I \ot_{2} \ell_2^K$ into $\ell_2^I \ot_{2} \ell_2^K$ is
equipped with the operator norm. If $1 \leq p < \infty$, the space
$S_p^{I}\big(S_p^{K}\big)$ is the space of those compact operators
$C$ from $\ell_2^I \ot_{2} \ell_2^K$ into $\ell_2^I \ot_{2}
\ell_2^K$ such that $\norm{C}_{S_p^{I}(S_p^{K})} = \big((\tr \ot
\tr)(C^*C)^{\frac{p}{2}}\big)^{\frac{1}{p}} < \infty$.

Elements of $S_p^{I}$ are regarded as matrices $A=[a_{ij}]_{i,j \in
I}$ of $\Mat_{I}$. If $A \in S_p^{I}$ we denote by $A^T$ the
operator of $S_p^{I}$ whose the matrix is the matrix transpose of
$A$. If $1 \leq p \leq \infty$, $A \in S_p^{I}$ and $B \in
S_{p^*}^{I}$, the operator $AB^T$ belongs to $S_1^{I}$. We let
$\langle A,B\rangle_{S_p^{I},S_{p^*}^{I}}=\tr\big(AB^T\big)$. We
have $ \langle A,B\rangle_{S_p^{I},S_{p^*}^{I}}=\lim_{J}\sum_{i,j\in
J}a_{ij}b_{ij}$.

We equip $S_\infty^{I}$ with its natural operator space structure
coming from its structure as a $C^*$-algebra. We equip $S_1^{I}$
with its natural operator space structure coming from its structure
as dual of $S_\infty^{I}$. If $1 < p < \infty $, we give on
$S_p^{I}$ the operator space structure defined by $S_p^{I} =
\big(S_\infty^{I},S_1^{I}\big)_{\frac{1}{p}}$ completely
isometrically (see \cite{Pis4} page 140 for interesting remarks on
this definition). By the same way, we define an operator space
structure on $S_p^{I}\big(S_p^{K}\big)$. We have completely
isometrically
$S_p^{I}\big(S_p^{K}\big)=S_p^{K}\big(S_p^{I}\big)=S_p^{I\times K}$.
We will often silently use these identifications. By the same way,
we define $S_p^{I}\big(S_p^{K}(S_p^{L})\big)$ and similar operator
space structures. G. Pisier showed that a map $T\colon S_p^{I} \to
S_p^{I}$ is completely bounded if $Id_{S_p} \ot T$ is bounded on
$S_p\big(S_p^{I}\big)$ (see \cite[Lemma 1.7]{Pis2}). The readers are
refereed to \cite{Pis2} for the details on operator space structures
on the Schatten-von Neumann class.

We denote by $*$ the Schur (Hadamard) product: if
$A=[a_{ij}]_{i,j\in I}$ and $B=[b_{ij}]_{i,j\in I }$ are matrices of
$\Mat_I$ we have $A*B=[a_{ij}b_{ij}]_{i,j \in I}$. We recall that a
matrix $A$ of $\Mat_{I}$ defines a Schur multiplier $M_A$ on
$S_p^{I}$ if for any $B \in S_p^{I}$ the matrix $M_{A}(B)=A*B$
represents an element of $S_p^{I}$. In this case, by the closed
graph theorem, the linear map $B \mapsto M_A(B)$ is bounded on
$S_p^{I}$. The notation $\frak{M}_{p}^{I}$ stands for the algebra of
all bounded Schur multipliers on the Schatten space $S_p^{I}$. We
denote by $\frak{M}_{p,cb}^{I}$ the space of completely bounded
Schur multipliers on $S_p^{I}$. We give the space
$\frak{M}_{p,cb}^{I}$ the operator space structure induced by
$CB\big(S_p^{I}\big)$. For $I=\mathbb{N}$, we simplify the
notations, we let $\frak{M}_{p}$ for $\frak{M}_{p}^{\mathbb{N}}$ and
$\frak{M}_{p,cb}$ for $\frak{M}_{p,cb}^{\mathbb{N}}$. Recall that if
$A\in S_{p}^{I}$, we have $M_A\in \frak{M}_{p}^{I}$ (see \cite{BLM}
page 225).

If $M_{C} \in \frak{M}_{p}^{I}$, we have $M_{C} \in
\frak{M}_{p^*}^{I}$. Moreover, if $A\in S_p^{I}$ and $B\in
S_{p^*}^{I}$, we have
$$
\big\langle
M_{C}(A),B\big\rangle_{S_{p}^{I},S_{p^*}^{I}}=\big\langle
A,M_{C}(B)\big\rangle_{S_{p}^{I},S_{p^*}^{I}}.
$$
If $1 \leq p \leq \infty$, the Banach spaces $\frak{M}_{p}^{I}$ and
$\frak{M}_{p^*}^{I}$ are isometric and the operator spaces
$\frak{M}_{p,cb}^{I}$ and $\frak{M}_{p^*,cb}^{I}$ are completely
isometric. We have $\frak{M}_{\infty}^{I}=\frak{M}_{\infty,cb}^{I}$
isometrically (see e.g. \cite[Remark 2.2]{Neu} and \cite[Lemma
2]{Hla}). Moreover, we have $\frak{M}_{\infty,cb}^{I}=\ell_\infty^I
\ot_{w^*h} \ell_\infty^I$ completely isometrically (see e.g.
\cite[Theorem 3.1]{Spr}) and $\frak{M}_{2}^{I}=\ell_\infty^{I\times
I}$ isometrically.

If $M_A \in \frak{M}_{p}^{I}$ is a Schur multiplier, we have
$\bnorm{M_{\mathcal{T}_J(A)}}_{B(S_p^I)} \leq
\norm{M_{A}}_{B(S_p^I)}$ for any finite subset $J$ of $I$. Moreover,
if $M_A \in \frak{M}_{p,cb}^{I}$, we have for any finite subset $J$
of $I$ the inequality $\bnorm{M_{\mathcal{T}_J(A)}}_{CB(S_p^I)} \leq
\norm{M_{A}}_{CB(S_p^I)}$.

It is well-known that the map $(A,B) \mapsto A*B$ from $S_p^{I}
\times S_{p^*}^{I}$ into $S_1^{I}$ is contractive. In order to study
the preduals of $\frak{M}_{p}^{I}$ and $\frak{M}_{p,cb}^{I}$, we
need to show that this map is jointly completely contractive.

\begin{prop}
Suppose $1 \leq p \leq \infty$. The bilinear map
$$
\begin{array}{cccc}
    &   S_p^{I} \times S_{p^*}^{I}  &  \longrightarrow   & S_1^{I}   \\
    &   (A,B)  &  \longmapsto       &  A*B  \\
\end{array}
$$
is jointly completely contractive.
\end{prop}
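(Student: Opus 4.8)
The plan is to linearize the bilinear map and pass to its adjoint. Joint complete contractivity of $(A,B)\mapsto A*B$ is equivalent to the assertion that the induced linear map $\Psi_p\colon S_p^{I} \otp S_{p^*}^{I} \to S_1^{I}$, $A\ot B \mapsto A*B$, is completely contractive. Since $\big(S_p^{I} \otp S_{p^*}^{I}\big)^*=CB\big(S_p^{I},(S_{p^*}^{I})^*\big)=CB(S_p^{I})$ and $(S_1^{I})^*=S_\infty^{I}$, and since $T\mapsto T^*$ is a complete isometry, the map $\Psi_p$ is completely contractive if and only if its adjoint $\Psi_p^*\colon S_\infty^{I} \to CB(S_p^{I})$ is completely contractive. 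Using the duality pairing $\langle X,Y\rangle=\sum_{ij}x_{ij}y_{ij}$ and the identity $\langle M_C(A),B\rangle=\langle A,M_C(B)\rangle$ recorded above, a direct computation gives $\Psi_p^*(C)=M_C$ for every $C\in S_\infty^{I}$. Thus everything reduces to showing that the symbol map $\mu_p\colon C\mapsto M_C$ is a complete contraction from $S_\infty^{I}$ into $CB(S_p^{I})$.

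First I would treat the endpoint $p=\infty$. Here I would use the completely isometric identification $S_\infty^{I}=C_I \ot_h R_I$ together with the natural complete contractions $j\colon C_I \to \ell_\infty^{I}$ and $k\colon R_I \to \ell_\infty^{I}$ sending the canonical vectors to the standard basis of $\ell_\infty^{I}$; these are completely contractive because a map into a commutative $C^*$-algebra has completely bounded norm equal to the supremum of the norms of its coordinate functionals. By functoriality of the Haagerup tensor product and the completely isometric inclusion $\ot_h \subset \ot_{w^*h}$, the map $j\ot k$ is a complete contraction
$$
S_\infty^{I}=C_I \ot_h R_I \longrightarrow \ell_\infty^{I} \ot_{w^*h} \ell_\infty^{I}=\frak{M}_{\infty,cb}^{I}\subset CB(S_\infty^{I}).
$$
Since $j\ot k$ sends the matrix unit $e_{ij}=e_{i1}\ot e_{1j}$ to $\delta_i \ot \delta_j$, which under the identification $\frak{M}_{\infty,cb}^{I}=\ell_\infty^{I} \ot_{w^*h}\ell_\infty^{I}$ corresponds to the multiplier $M_{e_{ij}}$, this composition agrees with $\mu_\infty$ on finitely supported matrices and hence equals $\mu_\infty$. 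Therefore $\mu_\infty\colon S_\infty^{I} \to CB(S_\infty^{I})$ is completely contractive.

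Finally I would interpolate. Applying the complete isometry $T\mapsto T^*$ from $CB(S_\infty^{I})$ onto $CB(S_1^{I})$ and the relation $(M_C)^*=M_C$ (a consequence of the same pairing identity), the endpoint result also shows $\mu$ is completely contractive from $S_\infty^{I}$ into $CB(S_1^{I})$. Viewing $\mu$ as a single linear map out of $S_\infty^{I}$, completely contractive into each of $CB(S_\infty^{I})$ and $CB(S_1^{I})$, the complex interpolation inequality for completely bounded maps, together with $S_p^{I}=\big(S_\infty^{I},S_1^{I}\big)_{1/p}$ and the completely contractive inclusion $\big(CB(S_\infty^{I}),CB(S_1^{I})\big)_{1/p}\subset CB(S_p^{I})$, yields that $\mu_p$ is completely contractive for every $1\le p \le \infty$; by the reduction of the first paragraph this proves the proposition. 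The main obstacle is precisely this endpoint step: one must upgrade the elementary scalar estimate $\norm{A*B}_{S_1^{I}}\le \norm{A}_{S_\infty^{I}}\norm{B}_{S_1^{I}}$ (seen by writing $B=\sum_k \sigma_k u_k v_k^*$ and using $A*(u_kv_k^*)=D_{u_k}AD_{\overline{v_k}}$ with the H\"older bound $\norm{D_{u_k}AD_{\overline{v_k}}}_{S_1^{I}}\le \norm{D_{u_k}}_{S_2^{I}}\norm{A}_{S_\infty^{I}}\norm{D_{\overline{v_k}}}_{S_2^{I}}=\norm{A}_{S_\infty^{I}}$) to the operator-space level, which is exactly what the Haagerup-tensor factorization $S_\infty^{I}=C_I\ot_h R_I \to \ell_\infty^{I} \ot_{w^*h}\ell_\infty^{I}$ accomplishes.
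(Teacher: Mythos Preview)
Your endpoint argument---factoring $S_\infty^I=C_I\otimes_h R_I$ through $\ell_\infty^I\otimes_h\ell_\infty^I\hookrightarrow\ell_\infty^I\otimes_{w^*h}\ell_\infty^I=\frak{M}_{\infty,cb}^I$ to obtain that $C\mapsto M_C$ is completely contractive from $S_\infty^I$ into $CB(S_\infty^I)$, and hence by passage to adjoints into $CB(S_1^I)$---is exactly the paper's argument. The problem lies in the reduction of your first paragraph: you assert $(S_1^I)^*=S_\infty^I$, but for infinite $I$ one has $(S_1^I)^*=B(\ell_2^I)$. Consequently the true adjoint is $\Psi_p^*\colon B(\ell_2^I)\to CB(S_p^I)$, and showing that its restriction to $S_\infty^I$ is completely contractive does not by itself yield $\|\Psi_p^*\|_{cb}\le1$. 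Put differently, your linear interpolation in the range proves that $S_\infty^I\times S_p^I\to S_p^I$ is jointly completely contractive, which is not the statement to be established.

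The simplest repair is already implicit in what you did and is precisely the paper's route. The fact that $\mu\colon S_\infty^I\to CB(S_1^I)$ is completely contractive says \emph{directly}, via the identification $CB(E,CB(F,G))=CB(E\widehat{\otimes}F,G)$ and with no adjoint needed, that $S_\infty^I\times S_1^I\to S_1^I$, $(A,B)\mapsto A*B$, is jointly completely contractive; by the commutativity of $*$ and of $\widehat{\otimes}$ so is $S_1^I\times S_\infty^I\to S_1^I$. The paper then finishes by bilinear complex interpolation between these two endpoints. If you prefer to salvage the adjoint route, you must extend $C\mapsto M_C$ from $S_\infty^I$ to all of $B(\ell_2^I)$: approximate an arbitrary $C\in B(\ell_2^I)$ by its truncations $\mathcal{T}_J(C)\in S_\infty^I$ (completely norm-decreasing at every matrix level) and use that $\Psi_p^*$, being an adjoint, is $w^*$-continuous into the dual space $CB(S_p^I)$---but that step is absent from your write-up.
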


\begin{proof}
We denote $\beta\colon \ell_2^I \to \ell_{\infty}^I$ the canonical
contractive map. We have
$$
\norm{\beta}_{cb,C_I \rightarrow {\ell_{\infty}^I}} =
\norm{\beta}_{\ell_{2}^I \rightarrow \ell_{\infty}^I}\leq 1 \ \ \
\text{and} \ \ \ \norm{\beta}_{cb,R_I \rightarrow \ell_{\infty}^I} =
\norm{\beta}_{\ell_{2}^I \rightarrow \ell_{\infty}^I} \leq 1
$$
(see \cite[(1.10)]{BLM}). Then by tensoring, the map $ C_I \ot_h R_I
\rightarrow \ell_{\infty}^I \ot_h \ell_{\infty}^I$ is completely
contractive. Now recall that we have a completely isometric
canonical map $\ell_{\infty}^I \ot_h \ell_{\infty}^I \rightarrow
\frak{M}_{\infty}^{I}$ and a completely isometric map $T\mapsto T^*$
from $CB(S_\infty^{I})$ into $CB(S_1^{I})$. Then the map
$$
\begin{array}{ccccccc}
 S_\infty^{I}=C_I \ot_h R_I &  \longrightarrow  &   \ell_{\infty}^I \ot_h \ell_{\infty}^I  &  \longrightarrow   &  \frak{M}_{\infty}^{I} &\longrightarrow & CB(S_1^{I}) \\
           e_{ij}  &       \longmapsto  &    e_i   \ot e_j                &  \longmapsto       &     M_{e_{ij}}                 & \longmapsto & M_{e_{ij}}\\
\end{array}
$$
is completely contractive. This means that the map $A\mapsto M_A$
from $S_\infty^{I}$ into $CB(S_1^{I})$ is completely contractive.
Then the map $(A,B)\mapsto A*B$ from $S_\infty^{I} \times S_{1}^{I}$
into $S_1^{I}$ is completely jointly contractive. By the
commutativity of $*$ and $\otp$, the map from $S_1^{I} \times
S_{\infty}^{I}$ into $S_1^{I}$ is also completely jointly
contractive. Finally, we obtain the result by bilinear interpolation
(see \cite{Pis4} page 57 and \cite{BeL} page 96).
\end{proof}

Then we can define the completely contractive map
$$
\begin{array}{cccc}
  \psi_p^{I}:  &   S_p^{I} \otp S_{p^*}^{I}  &  \longrightarrow   &  S_1^{I}  \\
               &   A \ot B  &  \longmapsto       & A*B.   \\
\end{array}
$$
As $S_p^{I} \otpb S_{p^*}^{I}$ embeds contractively into $S_p^{I}
\otp S_{p^*}^{I}$, the map $\psi_p^{I}$ induces a contraction from
$S_p^{I} \otpb S_{p^*}^{I}$ into $S_1^{I}$, which we denote by
$\varphi_{p}^{I}$. We let $\psi_p=\psi_p^{\mathbb{N}}$. The
following theorem (and the comments which follow) is a
noncommutative version of a theorem of Fig\`a-Talamanca \cite{Fig}.
This latter theorem states that the natural predual of the space of
bounded Fourier multipliers admits a concrete realization as a space
$A_p(G)$ of continuous functions on $G$. In the sequel, we consider
the dual pairs $CB(S_p^{I})$, $S_p^{I} \otp S_{p^*}^{I}$ and
$B(S_p^{I})$, $S_p^{I} \otpb S_{p^*}^{I}$ where $1 \leq p < \infty$.
\pagebreak[2]
\begin{thm}
Suppose $1 \leq p < \infty$.
\begin{enumerate}
  \item The pre-annihilator $\big(\frak{M}_{p,cb}^{I}\big)_{\perp}$ of the space $\frak{M}_{p,cb}^{I}$ of completely bounded Schur
multipliers on $S_p^{I}$ is equal to $\Ker \psi_p^{I}$. We have a
complete isometry $\frak{M}_{p,cb}^{I}=\big(S_p^{I} \otp
S_{p^*}^{I}/\Ker \psi_p^{I}\big)^* $.
  \item The pre-annihilator $\big(\frak{M}_{p}^{I}\big)_{\perp}$ of the space $\frak{M}_{p}^{I}$ of bounded Schur
multipliers on $S_p^{I}$ is equal to $\Ker \varphi_p^{I}$. We have
an isometry $\frak{M}_{p}^{I}=\big(S_{p}^{I} \otpb S_{p^*}^{I}/\Ker
\varphi_{p}^{I}\big)^* $.
\end{enumerate}
\end{thm}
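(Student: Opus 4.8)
The plan is to prove the single identity $(\Ker \psi_p^{I})^{\perp}=\frak{M}_{p,cb}^{I}$ inside $CB(S_p^{I})=\big(S_p^{I}\otp S_{p^*}^{I}\big)^{*}$ and then read off both assertions of (1) from general duality; part (2) will be the verbatim Banach-space analogue. Throughout I use the pairing $\langle T,A\ot B\rangle=\langle T(A),B\rangle_{S_p^{I},S_{p^*}^{I}}$ coming from $\big(E\otp F\big)^{*}=CB(E,F^{*})$ together with $\big(S_{p^*}^{I}\big)^{*}=S_p^{I}$ (valid since $p<\infty$). Under it one computes $\langle T,e_{ij}\ot e_{kl}\rangle=T(e_{ij})_{kl}$, and for a multiplier $\langle M_C,A\ot B\rangle=\sum_{i,j}C_{ij}A_{ij}B_{ij}$, i.e. the trace pairing of the symbol $C$ against $\psi_p^{I}(A\ot B)=A*B$.

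For the inclusion $\frak{M}_{p,cb}^{I}\subseteq(\Ker \psi_p^{I})^{\perp}$ (multipliers kill the kernel), the naive argument that $\langle M_C,\cdot\rangle$ factors through $\psi_p^{I}$ is not immediate, because the symbol $C$ need not define a bounded functional on $S_1^{I}$ (symbols of Schur multipliers need not lie in $S_\infty^{I}$). I would circumvent this by truncation: for finite $J$ the symbol $\mathcal{T}_J(C)$ is a finite matrix, hence lies in $\big(S_1^{I}\big)^{*}=B(\ell_2^{I})$, so $\langle M_{\mathcal{T}_J(C)},u\rangle=\langle \mathcal{T}_J(C),\psi_p^{I}(u)\rangle$ genuinely factors through $\psi_p^{I}$ and vanishes on $\Ker \psi_p^{I}$. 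Since $\norm{M_{\mathcal{T}_J(C)}}_{CB(S_p^{I})}\le\norm{M_C}_{CB(S_p^{I})}$ by the truncation bound recalled above, and $\langle M_{\mathcal{T}_J(C)},A\ot B\rangle\to\langle M_C,A\ot B\rangle$ on elementary tensors, uniform boundedness forces $M_{\mathcal{T}_J(C)}\to M_C$ in the weak* topology of $\big(S_p^{I}\otp S_{p^*}^{I}\big)^{*}$; passing to the limit against a fixed $u\in\Ker \psi_p^{I}$ yields $\langle M_C,u\rangle=0$.

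The substantive inclusion is $(\Ker \psi_p^{I})^{\perp}\subseteq\frak{M}_{p,cb}^{I}$: every $T\in CB(S_p^{I})$ annihilating $\Ker \psi_p^{I}$ is a Schur multiplier. Here I would set $C_{ij}:=\langle T,e_{ij}\ot e_{ij}\rangle=T(e_{ij})_{ij}$. Because $e_{ij}*e_{kl}=0$ for $(i,j)\neq(k,l)$, the tensor $e_{ij}\ot e_{kl}$ lies in $\Ker \psi_p^{I}$, so $T(e_{ij})_{kl}=\langle T,e_{ij}\ot e_{kl}\rangle=0$ off the diagonal entry; thus $T(e_{ij})=C_{ij}e_{ij}$ and $T(A)=C*A$ for every finitely supported $A$. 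Comparing $(k,l)$-entries and using that both $A\mapsto\langle T(A),e_{kl}\rangle$ and $A\mapsto C_{kl}\langle A,e_{kl}\rangle$ are continuous on $S_p^{I}$ and agree on the finite matrices, which are dense in $S_p^{I}$ (this is exactly where $p<\infty$ enters), I obtain $T(A)=C*A$ for all $A\in S_p^{I}$. In particular $C*A\in S_p^{I}$ for every $A$, so $M_C=T$ is a bounded Schur multiplier, completely bounded because $T$ is; that is, $T=M_C\in\frak{M}_{p,cb}^{I}$.

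With $(\Ker \psi_p^{I})^{\perp}=\frak{M}_{p,cb}^{I}$ in hand, both statements of (1) follow formally. Taking pre-annihilators and using $(N^{\perp})_{\perp}=\overline{N}=\Ker \psi_p^{I}$ for the norm-closed subspace $N=\Ker \psi_p^{I}$ gives $(\frak{M}_{p,cb}^{I})_{\perp}=\Ker \psi_p^{I}$; and the operator-space duality $(X/N)^{*}=N^{\perp}$ (completely isometrically), with $X=S_p^{I}\otp S_{p^*}^{I}$, yields the complete isometry $\frak{M}_{p,cb}^{I}=\big(S_p^{I}\otp S_{p^*}^{I}/\Ker \psi_p^{I}\big)^{*}$. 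Part (2) runs through the same three steps with $\otpb$, the identification $B(S_p^{I})=\big(S_p^{I}\otpb S_{p^*}^{I}\big)^{*}$, and $\varphi_p^{I}$ in place of $\otp$, $CB(S_p^{I})$, and $\psi_p^{I}$; everything transfers verbatim (note $\varphi_p^{I}$ is the restriction of $\psi_p^{I}$, so the same elementary tensors lie in $\Ker \varphi_p^{I}$, and $\norm{M_{\mathcal{T}_J(C)}}_{B(S_p^{I})}\le\norm{M_C}_{B(S_p^{I})}$), giving the isometry of (2). I expect the main obstacle to be the substantive inclusion — reconstructing a multiplier from a functional that merely annihilates the kernel — together with the care needed in the easy inclusion to avoid treating $C$ as an $S_1^{I}$-functional.
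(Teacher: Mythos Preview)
Your proof is correct and follows essentially the same route as the paper: the truncation-plus-weak* argument for $\frak{M}_{p,cb}^{I}\subset(\Ker\psi_p^{I})^{\perp}$ and the elementary-tensor argument $e_{ij}\ot e_{kl}\in\Ker\psi_p^{I}$ for the reverse inclusion are exactly what the paper does. The one organizational difference is that the paper, having established $(\Ker\psi_p^{I})^{\perp}\subset\frak{M}_{p,cb}^{I}$ and $\Ker\psi_p^{I}\subset(\frak{M}_{p,cb}^{I})_{\perp}$, proves separately that $\frak{M}_{p,cb}^{I}$ is a maximal commutative subset of $CB(S_p^{I})$ and hence weak*-closed, in order to deduce the complete isometry; you instead invoke the general operator-space identity $(X/N)^{*}=N^{\perp}$ directly once $(\Ker\psi_p^{I})^{\perp}=\frak{M}_{p,cb}^{I}$ is known. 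Your shortcut is legitimate (annihilators are automatically weak*-closed, so the maximal commutativity is not strictly needed here), while the paper's extra step yields the independently interesting fact that Schur multipliers form a maximal commutative subalgebra of $CB(S_p^{I})$.
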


\begin{proof}
We will only prove the part 1. The proof of part 2 is similar. Let
$C=\sum_{k=1}^{l} A_k \ot B_k \in S_p^{I} \ot S_{p^*}^{I}$. Note
that, for all integers $k$, we have $M_{A_k}\in \frak{M}_{p}^{I}$.
If $i,j$ are elements of $I$ we have
\begin{align*}
  \big\langle M_{e_{ij}},C\big\rangle_{CB(S_p^{I}),S_p^{I} \otp S_{p^*}^{I}}
   &= \Bigg\langle  M_{e_{ij}}, \sum_{k=1}^{l} A_k \ot B_k \Bigg\rangle_{CB(S_p^{I}),S_p^{I} \otp S_{p^*}^{I}} \\
   &= \sum_{k=1}^{l}\big\langle e_{ij} * A_k,B_k \big\rangle_{S_p^{I},S_{p^*}^{I}} \\
   &= \sum_{k=1}^{l}\big\langle e_{ij} , A_k*B_k\big\rangle_{S_p^{I},S_{p^*}^{I}}   \\
   &= \Bigg\langle e_{ij} , \sum_{k=1}^{l}A_k*B_k\Bigg\rangle\\
   &= \Big[\psi_p^{I}(C)\Big]_{ij}.
\end{align*}
By continuity, if $C \in S_p^{I} \otp S_{p^*}^{I}$, we have
$\big\langle M_{e_{ij}},C\big\rangle_{CB(S_p^{I}),S_p^{I} \otp
S_{p^*}^I}=\Big[\psi_p^I(C)\Big]_{ij}$. We deduce that, if $C \in
\Ker \psi_p^{I}$ and $M_D \in \frak{M}_{p,cb}^{I}$, we have for all
$J\in \mathcal{P}_f(I)$
$$
\big\langle M_{\mathcal{T}_J(D)},C\big\rangle_{CB(S_p^{I}),S_p^{I}
\otp S_{p^*}^{I}}=0.
$$
Now, it is easy to see that we have
$M_{\mathcal{T}_J(D)}\xrightarrow[J]{so} M_{D}$ in $CB(S_p^{I})$
$\big($i.e., for all $A \in S_p^I$, we have
$M_{\mathcal{T}_J(D)}(A)\xrightarrow[J]{} M_{D}(A)\big)$. Then
$M_{\mathcal{T}_J(D)}\xrightarrow[J]{wo} M_{D}$ in $CB(S_p^{I})$.
Moreover, recall that, for all $J\in \mathcal{P}_f(I)$, we have
$\bnorm{M_{\mathcal{T}_J(D)}}_{\frak{M}_{p,cb}^{I}} \leq
\norm{M_{D}}_{\frak{M}_{p,cb}^{I}}$. Thus
$M_{\mathcal{T}_J(D)}\xrightarrow[J]{w^*} M_{D}$. Consequently, if
$C\in \Ker \psi_p^{I}$ and $M_D\in \frak{M}_{p,cb}^{I}$ we have
$$
\big\langle M_{D},C\big\rangle_{CB(S_p^{I}),S_p^{I} \otp
S_{p^*}^{I}}=\lim_{J} \big\langle
M_{\mathcal{T}_J(D)},C\big\rangle_{CB(S_p^{I}),S_p^{I} \otp
S_{p^*}^{I}}=0.
$$
Thus we have $\Ker \psi_p^{I} \subset
\big(\frak{M}_{p,cb}^{I}\big)_{\perp}$. Now we will show that
$\big(\Ker\psi_p^{I}\big)^{\perp} \subset \frak{M}_{p,cb}^{I}$.
Suppose that $T \in \big(\Ker\psi_p^{I}\big)^{\perp}$. If $i,j,k,l$
are elements of $I$ such that $(i,j)\not=(k,l)$, the tensor $e_{ij}
\ot e_{kl}$ belongs to $\Ker\psi_p^{I}$. Therefore we have
\begin{align*}
  \big\langle T(e_{ij}),e_{kl}\big\rangle_{S_p^{I},S_{p^*}^{I}}  &= \big\langle T,e_{ij} \ot e_{kl}\big\rangle_{CB(S_p^{I}),S_p^{I} \otp S_{p^*}^{I}}  \\
   &= 0.
\end{align*}
Hence $T$ is a Schur multiplier. We conclude that
$\big(\Ker\psi_p^{I}\big)^{\perp}\subset \frak{M}_{p,cb}^{I}$. Since
$\Ker\psi_p^{I}$ is norm-closed in $S_p^{I} \otp S_{p^*}^{I}$ we
deduce that
$$
\big(\frak{M}_{p,cb}^{I}\big)_{\perp} \subset
\Big(\big(\Ker\psi_p^{I}\big)^{\perp}\Big)_{\perp}=\Ker\psi_p^{I}.
$$
Then the first claim of part 1 of the theorem is proved.

Now, we will show that $\frak{M}_{p,cb}^{I}$ is a maximal
commutative subset of $CB(S_p^{I})$. Let $T\colon S_p^{I}\to
S_p^{I}$ be a bounded map which commutes with all Schur multipliers
$M_{e_{ij}}\colon S_p^{I}\to S_p^{I}$ where $i,j\in I$. Then, for
all $i,j,k,l\in I$ such that $(i,j)\not=(k,l)$ we have
\begin{align*}
 \big\langle T(e_{ij}),e_{kl}\big\rangle_{S_p^{I},S_{p^*}^{I}}
   &= \big\langle TM_{e_{ij}}(e_{ij}),e_{kl}\big\rangle_{S_p^{I},S_{p^*}^{I}} \\
   &= \big\langle M_{e_{ij}}T(e_{ij}),e_{kl}\big\rangle_{S_p^{I},S_{p^*}^{I}} \\
   &= \big\langle T(e_{ij}),M_{e_{ij}}(e_{kl})\big\rangle_{S_p^{I},S_{p^*}^{I}}\\
   &= 0.
\end{align*}
Hence $T$ is a Schur multiplier. This proves the claim. Then
$\frak{M}_{p,cb}^{I}$ is weak* closed in $CB(S_p^I)$. We immediately
deduce the second claim of part 1 of the theorem.
\end{proof}

If $1\leq p < \infty$, we define the operator space
$\frak{R}_{p,cb}^{I}$ as the space $\Im \psi_p^{I}$ equipped with
the operator space structure of $S_p^{I} \otp S_{p^*}^{I}/\Ker
\psi_p^{I}$. We let $\frak{R}_{p,cb}=\frak{R}_{p,cb}^{\mathbb{N}}$.
We have completely isometrically
$\big(\frak{R}_{p,cb}^{I}\big)^*=\frak{M}_{p,cb}^{I}$. By
definition, we have a completely contractive inclusion
$\frak{R}_{p,cb}^{I} \subset S_1^{I}$. We define the Banach space
$\frak{R}_{p}^I$ as the space $\Im \varphi^{I}_p$ equipped with the
norm of $S_p^{I} \otpb S_{p^*}^{I}/\Ker \varphi_p^{I}$. We let
$\frak{R}_{p}=\frak{R}_{p}^{\mathbb{N}}$. We have isometrically
$\big(\frak{R}_p^{I}\big)^*=\frak{M}_p^{I}$.

By duality, well-known results on $\frak{M}_{p}^{I} $ and
$\frak{M}_{p,cb}^{I}$ translate immediately into results on
$\frak{R}_{p}^{I}$ and $\frak{R}_{p,cb}^{I}$. If $1\leq p < \infty$,
there is a contractive inclusion $\frak{R}_{p}^{I}\subset
\frak{R}_{p,cb}^{I}$. If $1 < p < \infty$, the Banach spaces
$\frak{R}_{p}^{I}$ and $\frak{R}_{p^*}^{I}$ are isometric and the
operator spaces $\frak{R}_{p,cb}^{I}$ and $\frak{R}_{p^*,cb}^{I}$
are completely isometric. We have a completely isometric isomorphism
\begin{equation}
\label{R1}
\begin{array}{cccc}
   &   \ell_1^I \ot_{h} \ell_1^I  &  \longrightarrow   &  \frak{R}_{1,cb}^{I}  \\
    & e_i \ot e_j    &  \longmapsto       &  e_{ij}  \\
\end{array}
\end{equation}
and isometric isomorphisms
$$
\begin{array}{cccc}
    &  \ell_1^I \ot_{h} \ell_1^I   &  \longrightarrow   & \frak{R}_{1}^{I}   \\
    &  e_i\ot e_j   &  \longmapsto       &   e_{ij} \\
\end{array}
 \ \ \ \text{and}\ \ \
\begin{array}{ccc}
       \ell_1^{I\times I}       &  \longrightarrow   &  \frak{R}_{2}^{I}=\frak{R}_{2,cb}^{I}   \\
               e_{ij}            &  \longmapsto       &         e_{ij}.         \\
\end{array}
$$
Suppose $1 \leq p\leq q \leq 2$, we have injective contractive maps
$$
\frak{M}_{1}^{I} \subset \frak{M}_{p}^{I}\subset \frak{M}_{q}^{I}
\subset \frak{M}_{2}^{I} \ \ \ \text{and} \ \ \ \frak{M}_{1,cb}^{I}
\subset \frak{M}_{p,cb}^{I}\subset \frak{M}_{q,cb}^{I} \subset
\frak{M}_{2,cb}^{I}
$$
(see \cite{Har} page 219). One more time, by duality, we deduce that
we have injective contractive inclusions
$$
\frak{R}_{2}^{I}\subset \frak{R}_{q}^{I}\subset
\frak{R}_{p}^{I}\subset \frak{R}_{1}^{I} \ \ \ \text{and} \ \ \
\frak{R}_{2,cb}^{I}\subset \frak{R}_{q,cb}^{I}\subset
\frak{R}_{p,cb}^{I}\subset \frak{R}_{1,cb}^{I}.
$$
Actually, the last inclusions are completely contractive. It is a
part of Proposition \ref{prop inclusion}.

Suppose $1 \leq p < \infty$. By a well-known property of the Banach
projective tensor product, an element $C$ in $S_1^I$ belongs to
$\frak{R}_p^{I}$ if and only if there exists two sequences
$(A_n)_{n\geq 1} \subset S_p^{I}$ and $(B_n)_{n\geq 1} \subset
S_{p^*}^{I}$ such that the series $\sum_{n=1}^{+\infty}A_n\ \ot B_n$
converges absolutely in $ S_p^{I} \widehat{\ot} S_{p^*}^{I}$ and $
C=\sum_{n=1}^{+\infty} A_n*B_n$ in $S_1^{I}$. Moreover, we have
\begin{align}
\label{expression norme}
\norm{C}_{\frak{R}_p^{I}}
    &= \inf\Bigg\{\ \sum_{n=1}^{+\infty}\norm{A_n}_{S_p^{I}} \norm{B_n}_{S_{p^*}^{I}}\ | \ C=\sum_{n=1}^{+\infty}A_n*B_n\Bigg\}
  \end{align}
where the infimum is taken over all possible ways to represent $C$
as before. We observe that we have an inclusion $\Mat^\fin_{I}
\subset \frak{R}_{p}^{I}$. It is clear that $\Mat^\fin_{I}$ is dense
in $\frak{R}_{p}^{I}$ and $\frak{R}_{p,cb}^{I}$.
\begin{remark}
The Banach spaces $\frak{M}_{p}^{I}$ and $\frak{M}_{p,cb}^{I}$
contain the space $\ell_\infty^I$. We deduce that, if $I$ is
infinite, the Banach spaces $\frak{M}_{p}^{I}$,
$\frak{M}_{p,cb}^{I}$, $\frak{R}_{p}^{I}$ and $\frak{R}_{p,cb}^{I}$
are not reflexive.
\end{remark}
Now we make precise the duality between the operator spaces
$\frak{M}_{p,cb}^{I}$ and $\frak{R}_{p,cb}^{I}$ on the one hand and
the Banach spaces $\frak{M}_{p}^{I}$ and $\frak{R}_{p}^{I}$ on the
other hand. Moreover, the next lemma specifies the density of
$\Mat_I^{\fin}$ in $\frak{R}_{p}^I$ and $\frak{R}_{p,cb}^I$.

\begin{lemma}
\label{duality Rp Mp}\label{lemma approx in Rp} Suppose $1 \leq p <
\infty$.
\begin{enumerate}
  \item If $J$ is a finite subset of $I$, the
truncation map $\mathcal{T}_J\colon \frak{R}_{p,cb}^I \to
\frak{R}_{p,cb}^I$ is completely contractive. Moreover, if $A\in
\frak{R}_{p,cb}^I$, we have in $\frak{R}_{p,cb}^I$
\begin{equation}\label{convergence truncation}
  \mathcal{T}_J(A)\xra[J]{}A.
\end{equation}
  \item For any completely bounded Schur multiplier $M_A\in \frak{M}_{p,cb}^{I}$ and any $B\in \frak{R}_{p,cb}^{I}$, we have
\begin{equation}\label{duality Rp-Mp}
\big\langle
M_{A},B\big\rangle_{\frak{M}_{p,cb}^{I},\frak{R}_{p,cb}^{I}}=\lim_{J}\sum_{i,j\in
J}a_{ij}b_{ij}.
\end{equation}
 \item If $J$ is a finite subset of $I$, the
truncation map $\mathcal{T}_J\colon \frak{R}_{p}^I \to
\frak{R}_{p}^I$ is contractive. Moreover, if $A\in \frak{R}_{p}^I$,
we have $ \mathcal{T}_J(A)\xra[J]{}A$ in $\frak{R}_{p}^I$.
\item For any bounded Schur multiplier $M_A\in \frak{M}_{p}^{I}$ and any $B\in \frak{R}_{p}^{I}$, we have
$\displaystyle \big\langle
M_{A},B\big\rangle_{\frak{M}_{p}^{I},\frak{R}_{p}^{I}}=\lim_{J}\sum_{i,j\in
J}a_{ij}b_{ij}.$
\end{enumerate}
\end{lemma}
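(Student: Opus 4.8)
The plan is to establish parts 1 and 2 in the operator space setting and then obtain parts 3 and 4 by the same arguments with the operator space projective tensor product $\otp$ replaced by the Banach projective tensor product $\otpb$ and ``completely contractive'' weakened to ``contractive''. The crucial structural observation, which drives everything, is that truncation is a compression: writing $P_J$ for the orthogonal projection of $\ell_2^I$ onto $\ell_2^J$, one has $\mathcal{T}_J(B)=P_JBP_J$ for every matrix $B$, so on $S_p^I$ the truncation $\mathcal{T}_J$ is nothing but the Schur multiplier $M_{\chi_{J\times J}}$ with $\chi_{J\times J}(k,l)=\chi_J(k)\chi_J(l)$. Since the compression $X\mapsto P_JXP_J$ is completely contractive on $S_\infty^I$ and on $S_1^I$, it is completely contractive on $S_p^I$ (and likewise on $S_{p^*}^I$) by the interpolation estimate for completely bounded maps recalled in \S2.

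For the complete contractivity in part 1, I would introduce the completely contractive map $u_J=(P_J\cdot P_J)\otp(P_J\cdot P_J)$ on $S_p^I\otp S_{p^*}^I$, which is completely contractive because $\otp$ respects complete contractions. A direct entrywise check gives the intertwining identity $\psi_p^I\circ u_J=\mathcal{T}_J\circ\psi_p^I$, since $(P_JAP_J)*(P_JBP_J)=\mathcal{T}_J(A*B)$. In particular $u_J$ maps $\Ker\psi_p^I$ into itself, so it descends, by the universal property of the operator space quotient, to a completely contractive map on $S_p^I\otp S_{p^*}^I/\Ker\psi_p^I=\frak{R}_{p,cb}^I$; and under the identification $\frak{R}_{p,cb}^I=\Im\psi_p^I\subset S_1^I$ the intertwining identity shows that this descended map is exactly $\mathcal{T}_J$. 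Hence $\mathcal{T}_J\colon\frak{R}_{p,cb}^I\to\frak{R}_{p,cb}^I$ is completely contractive.

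The convergence $\mathcal{T}_J(A)\xra[J]{}A$ then follows from a standard three-$\e$ argument: given $\e>0$, choose $A'\in\Mat_I^{\fin}$ with $\|A-A'\|_{\frak{R}_{p,cb}^I}<\e$ (using density of $\Mat_I^{\fin}$); for every finite $J$ containing the support of $A'$ one has $\mathcal{T}_J(A')=A'$, so the uniform bound $\|\mathcal{T}_J\|\le 1$ gives $\|\mathcal{T}_J(A)-A\|\le\|\mathcal{T}_J(A-A')\|+\|A'-A\|\le 2\e$. For the duality formula of part 2 I would combine this with continuity of the pairing and an explicit evaluation on finitely supported matrices. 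Recall that $CB(S_p^I)=(S_p^I\otp S_{p^*}^I)^*$ via $\langle T,U\ot V\rangle=\langle T(U),V\rangle_{S_p^I,S_{p^*}^I}$, and that $M_A$ annihilates $\Ker\psi_p^I$, so its value on a class in $\frak{R}_{p,cb}^I$ may be computed on any tensor preimage. For finite $J$ the matrix $\mathcal{T}_J(B)=\sum_{i,j\in J}b_{ij}e_{ij}$ has the preimage $\sum_{i,j\in J}b_{ij}(e_{ij}\ot e_{ij})$ under $\psi_p^I$, because $e_{ij}*e_{ij}=e_{ij}$; evaluating and using $\langle M_A(e_{ij}),e_{ij}\rangle=a_{ij}$ yields $\langle M_A,\mathcal{T}_J(B)\rangle=\sum_{i,j\in J}a_{ij}b_{ij}$. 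Letting $J$ increase and invoking $\mathcal{T}_J(B)\xra[J]{}B$ together with continuity of $M_A$ gives $\langle M_A,B\rangle=\lim_J\sum_{i,j\in J}a_{ij}b_{ij}$, and parts 3 and 4 follow by the identical computation with $\varphi_p^I$ in place of $\psi_p^I$.

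I expect the only genuinely delicate point to be the middle step: verifying that $u_J$ descends to the quotient and that the descended map really is $\mathcal{T}_J$, which rests entirely on the intertwining identity $\psi_p^I\circ u_J=\mathcal{T}_J\circ\psi_p^I$ and on the fact that $\frak{R}_{p,cb}^I$ carries the quotient operator space structure. Once this is in place the convergence (density plus uniform boundedness) and the duality formula (continuity plus a one-line evaluation on the $e_{ij}$) are routine bookkeeping with the dual pairings.
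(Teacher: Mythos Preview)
Your argument is correct. The duality computation $\langle M_A,e_{ij}\rangle=a_{ij}$, the three-$\varepsilon$ convergence, and the passage to the limit for the pairing formula all match the paper's proof essentially verbatim.

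Where you diverge from the paper is in how you obtain the complete contractivity of $\mathcal{T}_J$ on $\frak{R}_{p,cb}^I$. The paper argues on the \emph{dual} side: since $\mathcal{T}_J\colon S_p^I\to S_p^I$ is completely contractive, the induced map $M_A\mapsto M_{\mathcal{T}_J(A)}$ on $\frak{M}_{p,cb}^I$ is completely contractive (it is composition with $\mathcal{T}_J$, cf.\ Lemma~\ref{lemmaVW}); then one takes the preadjoint and uses density of $\Mat_I^{\fin}$ to identify that preadjoint with $\mathcal{T}_J$ on $\frak{R}_{p,cb}^I$. You instead work on the \emph{predual} side, lifting $\mathcal{T}_J$ to $u_J=\mathcal{T}_J\otp\mathcal{T}_J$ on $S_p^I\otp S_{p^*}^I$ and pushing it down through the quotient via the intertwining relation $\psi_p^I\circ u_J=\mathcal{T}_J\circ\psi_p^I$. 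Your route is a bit more self-contained---no need to identify a preadjoint---while the paper's route is shorter because it reuses the $\Theta_{V,W}$ lemma already in place. Both are entirely sound.
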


\begin{proof}
We only prove the assertions for the operator space
$\frak{R}_{p,cb}^I$. If $i,j$ are elements of $I$ and $M_A \in
\frak{M}_{p,cb}^{I}$, we have
\begin{align*}
   \big\langle M_{A},e_{ij}\big\rangle_{\frak{M}_{p,cb}^{I},\frak{R}_{p,cb}^{I}}
   &=  \big\langle M_{A},e_{ij}*e_{ij}\big\rangle_{\frak{M}_{p,cb}^{I},\frak{R}_{p,cb}^{I}}\\
   &=  \big\langle M_{A}(e_{ij}),e_{ij}\big\rangle_{S_p^{I},S_{p^*}^{I}}\\
   &= a_{ij}.
\end{align*}
Then we deduce that, for all $M_A \in \frak{M}_{p,cb}^{I}$ and all
$B \in \Mat^{\fin}_I$, we have $ \big\langle
M_{A},B\big\rangle_{\frak{M}_{p,cb}^{I},\frak{R}_{p,cb}^{I}}=\sum_{i,j\in
I}a_{ij}b_{ij}$. Now, it is not difficult to see that, for any
finite subset $J$ of $I$, the truncation map $\mathcal{T}_J:S_p^I
\to S_p^I$ is completely contractive. Then, it follows easily that
the truncation map $\mathcal{T}_J\colon \frak{M}_{p,cb}^I \to
\frak{M}_{p,cb}^I$ is completely contractive. Hence, by duality and
by using the density of $\Mat^{\fin}_I$ in $\frak{R}_{p,cb}^I$, we
deduce that the truncation map $\mathcal{T}_J\colon
\frak{R}_{p,cb}^I \to \frak{R}_{p,cb}^I$ is completely contractive.
Furthermore, by density of $\Mat_I^{\fin}$ in $\frak{R}_{p,cb}^I$,
it is not difficult to prove the assertion (\ref{convergence
truncation}). Finally, the equality (\ref{duality Rp-Mp}) is now
immediate.
\end{proof}

Finally, we end the section by giving supplementary properties of
these operator spaces. For that, we need the following proposition
inspired by \cite[Proposition 2.4]{Neu}. If $x,y\in \R$, we denote
by $M_{x,y}\colon S_{p}^I\to S_{p}^I$ the Schur multiplier
associated with the matrix $\big[e^{ixr}e^{iys}\big]_{r,s\in I}$ of
$\Mat_{I}$ and by $\overline{M}_{x,y}\colon S_{p}^I\to S_{p}^I$ the
Schur multiplier associated with the matrix $\big[e^{-i xr}e^{-i
ys}\big]_{r,s\in I}$ of $\Mat_{I}$. It is easy to see that, for all
$x,y\in \R$, the maps $M_{x,y}\colon S_{p}^I\to S_{p}^I$ and
$\overline{M}_{x,y}\colon S_{p}^I\to S_{p}^I$ are completely
contractive. We denote by $dx$ the normalized measure on $[0,2\pi]$.

\begin{prop}
Suppose $1 \leq p \leq \infty$. The space $\frak{M}_{p,cb}^{I}$ of
completely bounded Schur multipliers on $S_p^I$ is 1-completely
complemented in the space $CB\big(S_p^I\big)$.
\end{prop}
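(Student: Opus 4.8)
The plan is to exhibit an explicit completely contractive projection $P\colon CB(S_p^I)\to CB(S_p^I)$ with range exactly $\frak{M}_{p,cb}^{I}$, obtained by averaging over the two families of diagonal multipliers $M_{x,y}$ and $\overline{M}_{x,y}$. Writing $D_x=[\,e^{ixr}\delta_{rs}\,]_{r,s\in I}$ for the diagonal unitary, one has $M_{x,y}(B)=D_xBD_y$ and $\overline{M}_{x,y}(B)=D_x^*BD_y^*$, so that $\overline{M}_{x,y}=M_{-x,-y}$. For $T\in CB(S_p^I)$ I would set
\[
P(T)=\int_0^{2\pi}\!\!\int_0^{2\pi}\overline{M}_{x,y}\,T\,M_{x,y}\,dx\,dy,
\]
interpreted pointwise: $P(T)(B)=\int_0^{2\pi}\!\!\int_0^{2\pi}\overline{M}_{x,y}\big(T(M_{x,y}(B))\big)\,dx\,dy$ as a Bochner integral in $S_p^I$. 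Since $M_{x,y}$ and $\overline{M}_{x,y}$ are completely contractive on $S_p^I$, each conjugation $T\mapsto\overline{M}_{x,y}TM_{x,y}$ is the completely contractive map $\Theta_{\overline{M}_{x,y},M_{x,y}}$ of Lemma \ref{lemmaVW}.

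I would then check that $P$ is well defined and completely contractive. The integrand $(x,y)\mapsto\overline{M}_{x,y}(T(M_{x,y}(B)))$ is norm-continuous on the compact square (because $(x,y)\mapsto D_xBD_y$ is norm-continuous for $B$ compact and $T$ is bounded), hence Bochner integrable, and $\norm{P(T)(B)}_{S_p^I}\leq\norm{T}_{cb}\norm{B}_{S_p^I}$. For complete contractivity I would amplify: under the identification $M_n(CB(S_p^I))=CB(S_p^I,M_n(S_p^I))$, the amplification of $P$ is the pointwise average of the completely contractive maps $(\Id_{M_n}\ot\overline{M}_{x,y})\circ(\,\cdot\,)\circ M_{x,y}$, so the triangle inequality for vector-valued integrals against the probability measure $dx\,dy$ gives $\norm{P}_{cb}\leq1$. (For $1\leq p<\infty$ one may instead define $P(T)$ as a weak* integral in $CB(S_p^I)=(S_p^I\otp S_{p^*}^I)^*$ and read off $\norm{P}_{cb}\leq1$ directly from the fact that $P$ is a probability average of the completely contractive maps $\Theta_{\overline{M}_{x,y},M_{x,y}}$; the endpoint $p=\infty$ is covered using $\frak{M}_{\infty}^{I}=\frak{M}_{\infty,cb}^{I}$.)

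The heart of the argument is the computation that $P(T)$ is a Schur multiplier. Using $M_{x,y}(e_{ij})=e^{i(xi+yj)}e_{ij}$ together with the pairing identity $\langle \overline{M}_{x,y}(C),e_{kl}\rangle=e^{-i(xk+yl)}\langle C,e_{kl}\rangle$, one obtains for all $i,j,k,l\in I$
\[
\big\langle \overline{M}_{x,y}\,T\,M_{x,y}(e_{ij}),e_{kl}\big\rangle_{S_p^I,S_{p^*}^I}=e^{ix(i-k)}e^{iy(j-l)}\big\langle T(e_{ij}),e_{kl}\big\rangle_{S_p^I,S_{p^*}^I}.
\]
Averaging and using the orthogonality relation $\int_0^{2\pi}\!\!\int_0^{2\pi}e^{ix(i-k)}e^{iy(j-l)}\,dx\,dy=\delta_{ik}\delta_{jl}$ (valid here since the indices are integers; for a general index set one averages instead over the compact group $\T^I\times\T^I$ of pairs of diagonal unitaries, where $\int_{\T^I}u_i\overline{u_k}\,du=\delta_{ik}$) yields $\langle P(T)(e_{ij}),e_{kl}\rangle=\delta_{ik}\delta_{jl}\langle T(e_{ij}),e_{ij}\rangle$. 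Hence $P(T)$ has diagonal kernel, i.e. it is the Schur multiplier $M_A$ with symbol $a_{ij}=\langle T(e_{ij}),e_{ij}\rangle$; being completely bounded, it lies in $\frak{M}_{p,cb}^{I}$.

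Finally I would verify that $P$ restricts to the identity on $\frak{M}_{p,cb}^{I}$. If $T=M_C$ is a Schur multiplier, then $\overline{M}_{x,y}M_CM_{x,y}$ is again a Schur multiplier whose symbol is $e^{-i(xr+ys)}c_{rs}e^{i(xr+ys)}=c_{rs}$, so $\overline{M}_{x,y}M_CM_{x,y}=M_C$ for all $x,y$ and therefore $P(M_C)=M_C$. Thus $P$ is a completely contractive idempotent with range $\frak{M}_{p,cb}^{I}$, which is precisely the statement that $\frak{M}_{p,cb}^{I}$ is $1$-completely complemented in $CB(S_p^I)$. I expect the only genuinely technical point to be the rigorous treatment of the vector-valued integral and the passage from contractivity to \emph{complete} contractivity of $P$ (notably the endpoint $p=\infty$, where $CB(S_\infty^I)$ is not a dual operator space); everything else reduces to the orthogonality relation above.
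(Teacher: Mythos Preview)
Your proposal is correct and follows essentially the same averaging strategy as the paper: construct a completely contractive projection onto $\frak{M}_{p,cb}^I$ by integrating the conjugation maps $\Theta_{M_{x,y},\overline{M}_{x,y}}$ over the torus, then verify via the orthogonality relations that the result is a Schur multiplier fixing Schur multipliers. The only cosmetic difference is that the paper writes the conjugation in the order $M_{x,y}T\overline{M}_{x,y}$ rather than your $\overline{M}_{x,y}TM_{x,y}$ (equivalent after $(x,y)\mapsto(-x,-y)$), and your parenthetical about averaging over $\T^I\times\T^I$ for a general index set is a refinement the paper does not make explicit, since its definition of $M_{x,y}$ implicitly takes $I\subset\Z$.
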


\begin{proof}
Let $T\colon S_p^I\to S_p^I$ be a completely bounded map. For any
$A\in \Mat^{\fin}_I$ the map
$$
\begin{array}{cccc}
    &   [0,2\pi] \times [0,2\pi]  &  \longrightarrow   &  S_p^I  \\
    &  (x,y)   &  \longmapsto       &  M_{x,y}T\overline{M}_{x,y}(A)  \\
\end{array}
$$
is continuous and we have
\begin{align*}
\MoveEqLeft\Bgnorm{\int_{0}^{2\pi}\int_{0}^{2\pi}
M_{x,y}T\overline{M}_{x,y}(A)dxdy}_{S_p^I}
\leq \int_{0}^{2\pi}\int_{0}^{2\pi}\Bnorm{ M_{x,y}T\overline{M}_{x,y}(A)}_{S_p^I}dxdy \\
   &\leq \int_{0}^{2\pi}\int_{0}^{2\pi} \bnorm{M_{x,y}}_{S_p^I \xra{} S_p^I}\norm{T}_{S_p^I \xra{} S_p^I}\bnorm{\overline{M}_{x,y}}_{S_p^I \xra{} S_p^I}\norm{A}_{S_{p}^I}dxdy \\
   &\leq \norm{T}_{S_p^I \xra{} S_p^I}\norm{A}_{S_{p}^I}.
\end{align*}
By the previous computation, we deduce that there exists a unique
linear map $P(T)\colon S_p^I\to S_p^I$ such that for all $A\in
S_p^I$ we have
$$
\big(P(T)\big)(A)=\int_{0}^{2\pi}\int_{0}^{2\pi}M_{x,y}T\overline{M}_{x,y}(A)dxdy.
$$
Moreover, for all $\sum_{k=1}^{l}A_k\ot B_k \in \Mat_{\fin} \ot
S_p^I$ we have
\begin{align*}
\MoveEqLeft \Bgnorm{\Big(Id_{S_p}\ot
P(T)\Big)\Bigg(\sum_{k=1}^{l}A_k \ot B_k \Bigg)}_{S_p(S_p^I)}
=\Bgnorm{\sum_{k=1}^{l}A_k\ot\int_{0}^{2\pi}\int_{0}^{2\pi}M_{x,y}T\overline{M}_{x,y}(B_k)dxdy}_{S_p(S_p^I)}\\
&= \Bgnorm{\int_{0}^{2\pi}\int_{0}^{2\pi}\Big(Id_{S_p}\ot
M_{x,y}T\overline{M}_{x,y}\Big)\Bigg(\sum_{k=1}^{l}A_k\ot
B_k\Bigg)dxdy}_{S_p(S_p^I)}
\\
 & \leq \norm{T}_{cb,S_p^I \rightarrow
S_p^I}\Bgnorm{\sum_{k=1}^{l}A_k\ot B_k}_{S_p(S_p^I)}.
\end{align*}
Thus we see that the linear map $P(T)$ is actually completely
bounded and that we have $ \bnorm{P(T)}_{cb,S_p^I \rightarrow S_p^I}
\leq \norm{T}_{cb,S_p^I \rightarrow S_p^I}$. Now, for all $r,s,k,l
\in I$ we have
\begin{align*}
 \big\langle P(T)e_{rs},e_{kl}\big\rangle_{S_p^I,S_{p^*}^I}
   &= \int_{0}^{2\pi}\int_{0}^{2\pi}\Big\langle M_{x,y}T\overline{M}_{x,y}e_{rs},e_{kl}\Big\rangle_{S_p^I,S_{p^*}^I}dxdy \\
   &= \int_{0}^{2\pi}\int_{0}^{2\pi} e^{-i xr}e^{-i ys}\Big\langle M_{x,y}Te_{rs},e_{kl}\Big\rangle_{S_p^I,S_{p^*}^I}dxdy \\
   &= \Bigg(\int_{0}^{2\pi}\int_{0}^{2\pi} e^{-i xr}e^{-i ys}e^{i xk}e^{i yl} dxdy\Bigg) \big\langle Te_{rs},e_{kl}\big\rangle_{S_p^I,S_{p^*}^I}\\
   &= \Bigg(\int_{0}^{2\pi} e^{i x(k-r)}dx\Bigg)\Bigg(\int_{0}^{2\pi}e^{i y(l-s)}dy\Bigg)\big\langle Te_{rs},e_{kl}\big\rangle_{S_p^I,S_{p^*}^I}\\
   &= \delta_{rk}\delta_{sl}\big\langle T(e_{rs}),e_{kl}\big\rangle_{S_p^I,S_{p^*}^I}.
\end{align*}
Then the linear map $P(T)\colon S_p^{I}\to S_p^{I}$ is a Schur
multiplier. Moreover, if $T\colon S_p^{I}\to S_p^{I}$ is a Schur
multiplier, we have $P(T)=T$.

Now, if $T \in M_n\big(CB(S_p^I)\big)$ and $[A_{kl}]_{1\leq k,l\leq
m}\in M_m\big(S_p^I\big)$, with the notations of Lemma
\ref{lemmaVW},  we have
\begin{align*}
\MoveEqLeft \Biggl\| \biggl[\int_{0}^{2\pi}\int_{0}^{2\pi} M_{x,y}T_{ij}\overline{M}_{x,y}(A_{kl})dx dy\biggr]_{\raisebox{2pt}{\ensuremath{\substack{1\leq i,j\leq n\\ 1 \leq k,l\leq m}}}}\Biggr\|_{M_{mn}(S_p^I)} \\
   &\leq   \int_{0}^{2\pi} \int_{0}^{2\pi}\bgnorm{\Big[ M_{x,y}T_{ij}\overline{M}_{x,y}\Big]_{1\leq i,j\leq n}}_{M_n(CB(S_p^I))} \bnorm{[A_{kl}]}_{}dx dy \\
   &=       \int_{0}^{2\pi} \int_{0}^{2\pi}\bgnorm{\Big(Id_{M_n}\ot\Theta_{M_{x,y},\overline{M}_{x,y}}\Big)(T)}_{M_n(CB(S_p^I))}\bnorm{[A_{kl}]}dx dy \\
   &\leq   \norm{T}_{M_n(CB(S_p^I))}\bnorm{[A_{kl}]_{1\leq k,l\leq m}}_{M_m(S_p^I)}\hspace{1cm} \text{by Lemma \ref{lemmaVW}}.
\end{align*}
Thus we obtain
\begin{align*}
\bnorm{(Id_{M_n}\ot P)(T)}_{M_n(CB(S_p^I))}
   &=     \Bnorm{\big[P(T_{ij})\big]_{1 \leq i,j\leq n}}_{M_n(CB(S_p^I))} \\
   &\leq   \norm{T}_{M_n(CB(S_p^I))}.
\end{align*}
We deduce that the map $P\colon CB\big(S_p^I\big) \to
\frak{M}_{p,cb}^{I}$ is completely contractive.
 The proof is complete.
\end{proof}

\begin{prop}
\label{prop inclusion}
\begin{enumerate}
  \item We have completely isometric isomorphisms
$$
\begin{array}{cccc}
    &  \ell_1^I \otp \ell_1^I   &  \longrightarrow   & \frak{R}_{2,cb}^{I}   \\
    &  e_i\ot e_j   &  \longmapsto       &   e_{ij} \\
\end{array}
 \ \ \ \text{and}\ \ \
\begin{array}{cccc}
    &   \ell_\infty^{I \times I}  &  \longrightarrow   &  \frak{M}_{2,cb}^{I}   \\
    &             A               &  \longmapsto       &         M_A.         \\
\end{array}
$$
  \item Suppose $1 \leq p\leq q \leq 2$. We have injective completely contractive maps
$$
\frak{M}_{1,cb}^{I}\subset \frak{M}_{p,cb}^{I}\subset
\frak{M}_{q,cb}^{I}\subset \frak{M}_{2,cb}^{I} \ \ \ \text{and}\ \ \
\frak{R}_{2,cb}^{I}\subset \frak{R}_{q,cb}^{I}\subset
\frak{R}_{p,cb}^{I}\subset \frak{R}_{1,cb}^{I}.
$$
\end{enumerate}
\end{prop}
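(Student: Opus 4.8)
The plan is to establish the inclusions of part (2) first, since their bounded (Banach) analogues are already recorded, and then to deduce part (1) by a duality argument that reduces everything to one operator–space computation at the midpoint $p=2$.

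For part (2) I would argue by complex interpolation between the \emph{dual} endpoints $S_p^I$ and $S_{p^*}^I$. Fix $1\le p\le q\le 2$; since $p\le 2\le p^*$ we have $q\in[p,p^*]$, so by reiteration of the defining identity $S_r^I=(S_\infty^I,S_1^I)_{1/r}$ there is $\theta\in[0,1]$ with $S_q^I=(S_p^I,S_{p^*}^I)_\theta$ completely isometrically. Because $\Mat^{\fin}_I$ is contained and dense in each of $S_p^I$ and $S_{p^*}^I$ (including the extreme case $p=1$, $p^*=\infty$, as finite–rank matrices are dense in $S_\infty^I=K(\ell_2^I)$), the completely contractive embedding $\big(CB(E_0),CB(E_1)\big)_\theta\subset CB(E_\theta)$ from the preliminaries applies with $E_0=S_p^I$, $E_1=S_{p^*}^I$. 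The decisive point is that $\frak{M}_{p,cb}^I$ and $\frak{M}_{p^*,cb}^I$ are completely isometric \emph{through the identity $M_A\mapsto M_A$ on multipliers}; regarding both as the same subspace of the corresponding $CB$–spaces, the couple $(\frak{M}_{p,cb}^I,\frak{M}_{p^*,cb}^I)$ carries coinciding matrix norms, whence $(\frak{M}_{p,cb}^I,\frak{M}_{p^*,cb}^I)_\theta=\frak{M}_{p,cb}^I$ completely isometrically (one interpolates a space with itself). Applying the interpolation functor to the completely isometric inclusions $\frak{M}_{r,cb}^I\subset CB(S_r^I)$ then gives
\begin{equation*}
\frak{M}_{p,cb}^I=\big(\frak{M}_{p,cb}^I,\frak{M}_{p^*,cb}^I\big)_\theta\subset\big(CB(S_p^I),CB(S_{p^*}^I)\big)_\theta\subset CB(S_q^I),
\end{equation*}
all maps completely contractive; since a Schur multiplier is sent to a Schur multiplier, the image lands in $\frak{M}_{q,cb}^I$, which yields the completely contractive inclusion $\frak{M}_{p,cb}^I\subset\frak{M}_{q,cb}^I$ and hence the whole $\frak{M}$–chain.

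The $\frak{R}$–chain then follows by duality. The inclusion $\iota\colon\frak{M}_{p,cb}^I\to\frak{M}_{q,cb}^I$ is w*–continuous: w*–convergence in $\frak{M}_{p,cb}^I$ is tested against $\frak{R}_{p,cb}^I$, and since $\frak{R}_{q,cb}^I\subset\frak{R}_{p,cb}^I$ (the already cited contractive inclusion suffices here) it forces w*–convergence in $\frak{M}_{q,cb}^I$. A w*–continuous complete contraction of dual operator spaces has a completely contractive pre-adjoint, which here is the identity on $\Mat^{\fin}_I$ and so is the natural map $\frak{R}_{q,cb}^I\to\frak{R}_{p,cb}^I$; it is injective because both spaces embed compatibly and contractively into $S_1^I$. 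This gives the completely contractive $\frak{R}$–inclusions.

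For part (1) I would first observe that the two displayed maps are mutually adjoint under the dualities $(\frak{R}_{2,cb}^I)^*=\frak{M}_{2,cb}^I$ and $(\ell_1^I\otp\ell_1^I)^*=CB(\ell_1^I,\ell_\infty^I)=\ell_\infty^I\ot_{\min}\ell_\infty^I=\ell_\infty^{I\times I}$, so it is enough to show that $A\mapsto M_A$ is a complete isometry from $\ell_\infty^{I\times I}$ onto $\frak{M}_{2,cb}^I$. The lower bound is immediate: evaluating $[M_{A^{(kl)}}]\in M_n(\frak{M}_{2,cb}^I)$ on $e_{ij}\in S_2^I$ produces $\mathbf{A}_{ij}\ot e_{ij}$, where $\mathbf{A}_{ij}=[a^{(kl)}_{ij}]_{kl}$, with $\|\mathbf{A}_{ij}\ot e_{ij}\|_{M_n(S_2^I)}=\|\mathbf{A}_{ij}\|_{M_n}$, so the cb–norm is at least $\sup_{i,j}\|\mathbf{A}_{ij}\|_{M_n}=\|[A^{(kl)}]\|_{M_n(\ell_\infty^{I\times I})}$. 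The reverse inequality, i.e. complete contractivity of $A\mapsto M_A$ into $CB(S_2^I)$, is the delicate direction: the minimal–type matrix norms of the interpolation midpoint $S_2^I=(S_\infty^I,S_1^I)_{1/2}$ are needed, and the Haagerup estimate that handled the endpoints $p=1,\infty$ points the wrong way (it controls $\ot_h$, not $\ot_{\min}$). Here I would exploit the operator–Hilbertian structure of $S_2^I$ — writing $S_2^I=(C_I,R_I)_{1/2}\ot_h(R_I,C_I)_{1/2}$ — together with the fact that a Schur multiplier is diagonal for the orthogonal decomposition $S_2^I=\bigoplus_{(i,j)}\C\,e_{ij}$, to show that such diagonal maps carry exactly the minimal matrix norm; combined with the lower bound this gives the complete isometry, and the predual statement follows by duality.

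I expect the main obstacle to be precisely this reverse inequality in part (1): proving that the operator space structure $\frak{M}_{2,cb}^I$ inherits from $CB(S_2^I)$ is the commutative (minimal) one. This requires the genuine operator–space structure of $S_2^I$ at the interpolation midpoint, rather than the one–sided (and, via $\ot_h$, wrong–sided) estimates that are enough to drive the interpolation argument of part (2).
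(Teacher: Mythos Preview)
Your argument for part~(2) is correct and takes a genuinely different route from the paper. The paper first proves part~(1), then uses the identification $\frak{M}_{2,cb}^I=\ell_\infty^{I\times I}$ (hence its minimal operator space structure) to upgrade the known Banach contraction $\frak{M}_{p,cb}^I\to\frak{M}_{2,cb}^I$ to a complete contraction; it then interpolates between $S_p^I$ and $S_2^I$ and invokes the $1$-complementation of $\frak{M}_{r,cb}^I$ in $CB(S_r^I)$ (the averaging projection of the preceding proposition) to land in $\frak{M}_{q,cb}^I$. Your choice of endpoints $p$ and $p^*$, exploiting the complete isometry $\frak{M}_{p,cb}^I=\frak{M}_{p^*,cb}^I$, trivialises the interpolation on the multiplier side and bypasses both part~(1) and the complementation lemma entirely --- a neater, more self-contained argument. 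The duality step for the $\frak{R}$-chain is also fine.

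Part~(1), however, has a real gap. You correctly isolate the hard direction (complete contractivity of $A\mapsto M_A$ from $\ell_\infty^{I\times I}$ into $CB(S_2^I)$), but your sketch --- ``exploit the operator-Hilbertian structure'' so that ``diagonal maps carry exactly the minimal matrix norm'' --- is a restatement of the goal, not a proof. The paper's mechanism is concrete and short, and it is the idea you are missing: use Pisier's identification $S_2^I=(C_{I\times I},R_{I\times I})_{1/2}$. The diagonal embedding $\ell_\infty^{I\times I}\hookrightarrow B(\ell_2^{I\times I})=CB(C_{I\times I})$ is a $*$-homomorphism, hence completely isometric; passing to adjoints (and using $(M_A)^*=M_A$) gives the same for $CB(R_{I\times I})$. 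Interpolating these two complete isometries yields a complete contraction
\[
\ell_\infty^{I\times I}\longrightarrow\bigl(CB(C_{I\times I}),CB(R_{I\times I})\bigr)_{1/2}\longrightarrow CB(S_2^I),
\]
which, combined with the easy direction (minimality of $\ell_\infty^{I\times I}$, or your explicit lower bound), gives the complete isometry. The Haagerup factorisation $S_2^I=(C_I,R_I)_{1/2}\ot_h(R_I,C_I)_{1/2}$ you wrote down is compatible with this, but the operative device is the column--row interpolation description of $OH_{I\times I}$, not a $\ot_h$-factorisation.
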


\begin{proof}
1) By minimality, we have a completely contractive map
$\frak{M}_{2,cb}^{I} \rightarrow \ell_\infty^{I \times I}$. We will
show that the inverse map is completely contractive. We have a
complete isometry
$$
\begin{array}{cccc}
    &   \ell_\infty^{I\times I}  &  \longrightarrow   &  B\big(S_2^I\big)= CB(C_{I\times I})  \\
    &               A                      &  \longmapsto       &           M_A.   \\
\end{array}
$$
Now we know that $(R_{I\times I})^*=C_{I\times I}$. Then we deduce a
complete isometry
$$
\begin{array}{ccccc}
   \ell_\infty^{I\times I}  &  \longrightarrow   &        CB(C_{I\times I})        &  \longrightarrow  &  CB(R_{I\times I})  \\
               A                      &  \longmapsto       &      M_A      &  \longmapsto      &  (M_A)^*=M_A. \\
\end{array}
$$
By interpolation, we deduce a complete contraction
$$
\ell_\infty^{I\times I} \rightarrow \big(CB(C_{I\times
I}),CB(R_{I\times I}) \big)_{\frac{1}{2}}.
$$  Recall that we have
$\big(C_{I\times I},R_{I\times I}\big)_{\frac{1}{2}}=S_2^{I}$
completely isometrically (see \cite{Pis2} pages 137 and 140). Then
we have a complete contraction
$$
\big(CB(C_{I\times I}),CB(R_{I\times I}) \big)_{\frac{1}{2}}
\rightarrow CB\big(S_2^{I}\big).
$$
Finally, we obtain a complete contraction $\ell_\infty^{I\times
I}\xra{}CB\big(S_2^{I}\big)$. We obtain the other isomorphism by
duality.

2) Let $1 \leq p\leq q \leq 2$. Recall that we have a contraction
from $\frak{M}_{p,cb}^{I}$ into $\frak{M}_{2,cb}^{I}$ (see
\cite{Har} page 219). Moreover we have
$\frak{M}_{2,cb}^{I}=\ell_\infty^{I\times I}$ completely
isometrically. Thus we have a complete contraction
$\frak{M}_{p,cb}^{I} \rightarrow \frak{M}_{2,cb}^{I}$. Now, there
exists $0 \leq \theta \leq 1$ with
$S_q^{I}=\big(S_p^{I},S_2^{I}\big)_\theta$. Moreover, the identity
mapping $ \frak{M}_{p,cb}^{I}\rightarrow\frak{M}_{p,cb}^{I}$ is
completely contractive. By interpolation, we obtain a complete
contraction $ \frak{M}_{p,cb}^{I} \rightarrow
\big(\frak{M}_{p,cb}^{I},\frak{M}_{2,cb}^{I}\big)_{\theta}$. On one
hand, we know that we have a complete contraction
$$
\Big(CB\big(S_p^{I}\big),CB\big(S_2^{I}\big)\Big)_\theta \rightarrow
CB\Big(\big(S_p^{I},S_2^{I}\big)_\theta\Big)=CB\big(S_q^{I}\big).
$$
On the other hand, the space $\frak{M}_{p,cb}^{I}$ of completely
bounded Schur multipliers is 1-completely complemented in the space
$CB\big(S_p^I\big)$. Then we have a complete contraction
$\big(\frak{M}_{p,cb}^{I},\frak{M}_{2,cb}^{I}\big)_{\theta}
\rightarrow \frak{M}_{q,cb}^{I}$. By composition, we deduce that we
have a complete contraction $\frak{M}_{p,cb}^{I} \subset
\frak{M}_{q,cb}^{I}$. We obtain the other completely contractive
maps by duality.
\end{proof}


\section{Non commutative Fig\`a-Talamanca-Herz algebras}
We begin with the cases $p=1$ and $p=2$. Recall that we have a
completely isometric isomorphism $\frak{R}_{1,cb}^I=\ell_1^I \ot_{h}
\ell_1^I$ (see (\ref{R1})) and a completely contractive inclusion
$\frak{R}_{1,cb}^I\subset S_1^I$. Hence, the trace on $S_1^I$
induces a completely contractive functional
$$
\begin{array}{cccc}
  \tr:  &  \ell_1^I \ot_{h} \ell_1^I   &  \longrightarrow   &  \mathbb{C}  \\
        &  e_i \ot e_j   &  \longmapsto       &  \delta_{ij}.  \\
\end{array}
$$
By tensoring, we deduce a completely contractive map
$$
Id_{\ell_1^I} \ot \tr \ot Id_{\ell_1^I}\colon \ell_1^I \ot_{h}
\ell_1^I \ot_{h} \ell_1^I \ot_{h} \ell_1^I \to \ell_1^I \ot_{h}
\ell_1^I.
$$
By composition with the canonical completely contractive map
$$
\big(\ell_1^I \ot_{h} \ell_1^I\big) \otp \big( \ell_1^I \ot_{h}
\ell_1^I\big)\xra{} \ell_1^I \ot_{h} \ell_1^I \ot_{h} \ell_1^I
\ot_{h} \ell_1^I
$$
we obtain a completely contractive map
$$
Id_{\ell_1^I} \ot \tr \ot Id_{\ell_1^I}\colon \big(\ell_1^I \ot_{h}
\ell_1^I\big) \otp \big( \ell_1^I \ot_{h} \ell_1^I\big) \to \ell_1^I
\ot_{h} \ell_1^I .
$$
With the identification $\frak{R}_{1,cb}^I=\ell_1^I \ot_{h}
\ell_1^I$, we obtain the completely contractive map
$$
\begin{array}{cccc}
    & \frak{R}_{1,cb}^I \otp \frak{R}_{1,cb}^I  &  \longrightarrow   & \frak{R}_{1,cb}^I  \\
    &  A \ot B   &  \longmapsto       &  AB.  \\
\end{array}
$$
This means that the space $\frak{R}_{1,cb}^I$ equipped with the
matricial product is a completely contractive Banach algebra. Now,
recall that we have $\frak{R}_{2,cb}^I=\ell_1^I \otp \ell_1^I$
completely isometrically. Then, by a similar argument,
$\frak{R}_{2,cb}^I$ equipped with the matricial product is also a
completely contractive Banach algebra. For other values of $p$, the
proof is more complicated since we do not have any explicit
description of $\frak{R}_{p,cb}^I$.

In the following proposition, we give a link between
$\frak{R}_{p,cb}^I$ and $\frak{R}_{p,cb}^{I\times I}$.

\begin{prop}
\label{link RpI and RpI2} Suppose $1\leq p < \infty$. Then there
exists a canonical complete contraction
$$
\begin{array}{cccc}
    &  \frak{R}_{p,cb}^I \otp \frak{R}_{p,cb}^I   &  \longrightarrow   &  \frak{R}_{p,cb}^{I\times I}  \\
    &   A \ot B  &  \longmapsto       & A \ot B.   \\
\end{array}
$$
\end{prop}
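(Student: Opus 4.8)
The plan is to exhibit the desired map as a composition of complete contractions that descends through a complete quotient map. The computational heart is the identity
\begin{equation*}
(A_1 * B_1)\ot(A_2 * B_2)=(A_1\ot A_2)*(B_1\ot B_2),\qquad A_1,A_2\in S_p^I,\ B_1,B_2\in S_{p^*}^I,
\end{equation*}
which holds because, under the identification $\Mat_I\ot\Mat_I\subset\Mat_{I\times I}$, the $\big((t,r),(u,s)\big)$-entry of either side equals $(a_1)_{tu}(b_1)_{tu}(a_2)_{rs}(b_2)_{rs}$. Thus the prospective map should send $\psi_p^I(A_1\ot B_1)\ot\psi_p^I(A_2\ot B_2)$ to $\psi_p^{I\times I}\big((A_1\ot A_2)\ot(B_1\ot B_2)\big)$.

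First I would record the auxiliary fact that the canonical bilinear map $S_p^I\times S_p^K\to S_p^{I\times K}$, $(A,B)\mapsto A\ot B$, is jointly completely contractive; equivalently, that there is a complete contraction $\theta_p\colon S_p^I\otp S_p^K\to S_p^{I\times K}$. This is obtained exactly as in the proof that the Schur product $S_p^I\times S_{p^*}^I\to S_1^I$ is jointly completely contractive: for $p=\infty$ it is the canonical completely contractive map $S_\infty^I\otp S_\infty^K\to S_\infty^I\ot_{\min}S_\infty^K=S_\infty^{I\times K}$ (see \cite{BLM}); for $p=1$ it is the complete isometry $S_1^I\otp S_1^K=S_1^{I\times K}$; and the intermediate values follow by bilinear complex interpolation from $S_p^{I\times K}=\big(S_\infty^{I\times K},S_1^{I\times K}\big)_{1/p}$ (see \cite{Pis4} page 57 and \cite{BeL} page 96).

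Next I would assemble the complete contraction
\begin{equation*}
\Phi\colon \big(S_p^I\otp S_{p^*}^I\big)\otp\big(S_p^I\otp S_{p^*}^I\big)\longrightarrow \frak{R}_{p,cb}^{I\times I}
\end{equation*}
as the composition of three maps: the canonical shuffle isomorphism onto $\big(S_p^I\otp S_p^I\big)\otp\big(S_{p^*}^I\otp S_{p^*}^I\big)$, which is a complete isometry by the commutativity and associativity of $\otp$; the map $\theta_p\otp\theta_{p^*}$, completely contractive by the functoriality of $\otp$ recalled earlier; and finally the quotient map $\psi_p^{I\times I}$. Evaluating on an elementary tensor and using the identity above gives $\Phi\big(A_1\ot B_1\ot A_2\ot B_2\big)=(A_1*B_1)\ot(A_2*B_2)$.

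Finally, recall that $\psi_p^I\colon S_p^I\otp S_{p^*}^I\to\frak{R}_{p,cb}^I$ is a complete quotient map, whence $q:=\psi_p^I\otp\psi_p^I$ is again a complete quotient map, since the operator space projective tensor product preserves complete quotient maps (see \cite{ER1}). Since $\Phi$ factors as $\Phi=m\circ q$, where $m\colon A\ot B\mapsto A\ot B$ is the candidate map $\frak{R}_{p,cb}^I\otp\frak{R}_{p,cb}^I\to\frak{R}_{p,cb}^{I\times I}$, the universal property of operator space quotients yields $\norm{m}_{cb}\le\norm{\Phi}_{cb}\le 1$. The main obstacle is precisely this factorization: I must verify that $\Phi$ annihilates $\ker q$, i.e. that $A\ot B\mapsto A\ot B$ is genuinely well defined on $\frak{R}_{p,cb}^I\otp\frak{R}_{p,cb}^I$ with values in $\frak{R}_{p,cb}^{I\times I}$. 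I would check this by reducing, via the density of $\Mat^{\fin}_I$ in $\frak{R}_{p,cb}^I$, to finitely supported matrices $A=\mathcal{T}_J(\cdot)$ and $B=\mathcal{T}_{J'}(\cdot)$, where $A\ot B$ is the honest (finite) matrix tensor, so that the identity $\Phi=m\circ q$ is a direct coordinate computation on the generators $e_{ij}\ot e_{ij}\ot e_{kl}\ot e_{kl}$, and then extending by continuity of $\Phi$ together with the quotient property.
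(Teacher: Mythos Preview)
Your overall strategy coincides with the paper's: assemble $\Phi$ as the composite of the shuffle, $\theta_p\otp\theta_{p^*}$, and $\psi_p^{I\times I}$; note that $q=\psi_p^I\otp\psi_p^I$ is a complete quotient map (\cite[Proposition 7.1.7]{ER1}); and descend $\Phi$ through $q$. The entry-by-entry identity $(A_1*B_1)\ot(A_2*B_2)=(A_1\ot A_2)*(B_1\ot B_2)$ that you isolate is exactly what drives the paper's computation as well.

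The one soft spot is the final paragraph. Appealing to ``density of $\Mat^{\fin}_I$ in $\frak{R}_{p,cb}^I$'' and ``checking on the generators $e_{ij}\ot e_{ij}\ot e_{kl}\ot e_{kl}$'' does not, as written, establish that $\Phi$ annihilates $\Ker q$: density in the \emph{target} of $q$ says nothing about the kernel upstairs, and those particular tensors do not span the domain. The clean repair uses only ingredients you already have. Your identity says precisely that, after composing with the completely contractive inclusion $\iota\colon\frak{R}_{p,cb}^{I\times I}\hookrightarrow S_1^{I\times I}$, one has $\iota\circ\Phi=\theta_1\circ q$ on every elementary tensor $A_1\ot B_1\ot A_2\ot B_2$; both sides are continuous linear maps into $S_1^{I\times I}$, hence agree everywhere, and $\Ker q\subset\Ker\Phi$ follows at once. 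The paper handles this step differently: it invokes the description
\[
\Ker q=\clos\Big(\Ker\psi_p^I\ot\big(S_p^I\otp S_{p^*}^I\big)+\big(S_p^I\otp S_{p^*}^I\big)\ot\Ker\psi_p^I\Big)
\]
from \cite[Proposition 7.1.7]{ER1} and verifies by a direct limit computation that $\psi_p^{I\times I}\circ\beta\circ\alpha$ kills each summand, using the same tensor identity you wrote down.
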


\begin{proof}
The identity mapping on $S_p^I \ot S_p^I$ extends to a complete
contraction $S_{p}^I\otp S_{p}^I \to S_{p}^I(S_{p}^I)$. Hence by
tensoring, we obtain a completely contractive map
$$
\beta\colon S_{p}^I \otp S_{p}^I \otp S_{p^*}^I \otp S_{p^*}^I \to
S_{p}^I(S_{p}^I) \otp S_{p^*}^I(S_{p^*}^I).
$$
The map $\psi_p^I\colon S_{p}^I \otp
S_{p^*}^I\xra{}\frak{R}_{p,cb}^I$ is a complete quotient map. By
\cite[Proposition 7.1.7]{ER1}, we obtain a complete quotient map
$$
\psi_p^I \ot \psi_p^I\colon S_{p}^I \otp S_{p^*}^I \otp S_{p}^I
\otp S_{p^*}^I\to \frak{R}_{p,cb}^I \otp \frak{R}_{p,cb}^I.
$$
Finally, by the commutativity of $\otp$, the map
$$
\begin{array}{cccc}
  \alpha:  &  S_{p}^I \otp S_{p^*}^I \otp S_{p}^I \otp S_{p^*}^I   &  \longrightarrow   &  S_{p}^I \otp S_{p}^I \otp S_{p^*}^I \otp S_{p^*}^I  \\
    &  A\ot B\ot C \ot D   &  \longmapsto       &  A \ot C \ot B \ot D  \\
\end{array}
$$
is completely isometric. We will prove that there exists a unique
linear map such that the following diagram is commutative and that
this map is completely contractive.
$$
\xymatrix @R=2cm @C=1cm{
    S_{p}^I \otp S_{p^*}^I \otp S_{p}^I \otp S_{p^*}^I \ar[r]^{\alpha} \ar[d]_{\psi_p^I \ot \psi_p^I} & S_{p}^I \otp S_{p}^I \otp S_{p^*}^I \otp S_{p^*}^I \ar[r]^{\beta} & S_{p}^I(S_{p}^I) \otp S_{p^*}^I(S_{p^*}^I) \ar[d]^{\psi_{p}^{I\times I}}\\
   \frak{R}_{p,cb}^I \otp \frak{R}_{p,cb}^I \ar@{.>}[rr]_{}                                               &                             & \frak{R}_{p,cb}^{I\times I}
  }
$$
We have $\frak{R}_{p,cb}^I \otp \frak{R}_{p,cb}^I =\Big(S_{p}^I \otp
S_{p^*}^I \otp S_{p}^I \otp S_{p^*}^I\Big)/\Ker\big(\psi_p^I \ot
\psi_p^I\big)$ completely isometrically. It suffices to show that
$\Ker \big(\psi_p^I \ot \psi_p^I\big) \subset
\Ker\big(\psi_{p}^{I\times I}\beta \alpha\big) $. By
\cite[Proposition 7.1.7]{ER1} , we have the equality
$$
\Ker \big(\psi_p^I \ot \psi_p^I\big)=\clos
\Big(\Ker\big(\psi_p^I\big) \ot S_{p}^I \otp S_{p^*}^I+ S_{p}^I \otp
S_{p^*}^I\ot \Ker\big(\psi_p^I\big) \Big).
$$
Since the space $\Ker\big(\psi_{p}^{I\times I}\beta \alpha\big)$ is
closed in $S_{p}^I \otp S_{p^*}^I \otp S_{p}^I \otp S_{p^*}^I$, it
suffices to show that
$$
\Ker\big(\psi_p^I\big) \ot S_{p}^I \otp S_{p^*}^I+ S_{p}^I \otp
S_{p^*}\ot \Ker\big(\psi_p^I\big)\subset \Ker\big(\psi_{p}^{I\times
I}\beta \alpha\big).
$$
Let $E \in \Ker\big(\psi_p^I\big) \ot  S_{p}^I \otp S_{p^*}^I$.
There exists integers $n_i, m_j$, matrices $A_{k,i},C_{l,j}\in
S_{p}^I$ and $B_{k,i},D_{l,j} \in S_{p^*}^I$ such that the sequences
$$
\Bigg(\sum_{k=1}^{n_i} A_{k,i}\ot B_{k,i} \Bigg)_{i \geq 1}  \ \ \ \
\ \text{and} \ \ \ \ \ \Bigg(\sum_{l=1}^{m_j} C_{l,j}\ot
D_{l,j}\Bigg)_{j\geq 1}
$$
are convergent in $S_p^I \otp S_{p^*}^I$,
$$
E=\Bigg(\lim_{i\to +\infty}\sum_{k=1}^{n_i} A_{k,i}\ot B_{k,i}
\Bigg)\ot\Bigg( \lim_{j\to +\infty}\sum_{l=1}^{m_j} C_{l,j}\ot
D_{l,j}\Bigg)
$$
and
$$
\psi_{p}^I\Bigg(\lim_{i \to
+\infty}\sum_{k=1}^{n_i} A_{k,i}\ot B_{k,i}\Bigg)= 0.
$$
Then, in the space $S_1^I$, we have
\begin{align}
 \sum_{k=1}^{n_i} A_{k,i}* B_{k,i}   &\xra[i \to +\infty]{}0.
\end{align}
Moreover, note that, by continuity of the map $\psi_p^I\colon S_p^I
\otp S_{p^*}^I \to S_1^I$, the sequence $\displaystyle
\Bigg(\sum_{l=1}^{m_j} C_{l,j}* D_{l,j}\Bigg)_{j\geq 1}$ is
convergent. Now, we have
\begin{align*}
\MoveEqLeft \psi_{p}^{I\times I}\beta \alpha(E)
    = \psi_{p}^{I\times I}\beta \alpha\Bigg(\Bigg(\lim_{i \to +\infty}\sum_{k=1}^{n_i} A_{k,i}\ot B_{k,i}\Bigg)\ot\Bigg( \lim_{j \to +\infty}\sum_{l=1}^{m_j} C_{l,j}\ot D_{l,j}\Bigg)\Bigg) \\
   &= \lim_{i \to +\infty}\lim_{j \to +\infty} \sum_{k=1}^{n_i}\sum_{l=1}^{m_j}\psi_{p}^{I\times I}\beta \alpha\big( A_{k,i}\ot B_{k,i} \ot C_{l,j}\ot D_{l,j}\big) \\
   &= \lim_{i \to +\infty}\lim_{j \to +\infty} \sum_{k=1}^{n_i}\sum_{l=1}^{m_j}\psi_{p}^{I\times I}\big( A_{k,i}\ot C_{l,j} \ot B_{k,i}\ot D_{l,j}\big)\\
   &= \lim_{i \to +\infty}\lim_{j \to +\infty} \sum_{k=1}^{n_i}\sum_{l=1}^{m_j}  \big(A_{k,i}\ot C_{l,j}\big)*\big(B_{k,i}\ot D_{l,j}\big)\\
   &= \lim_{i \to +\infty}\lim_{j \to +\infty} \sum_{k=1}^{n_i}\sum_{l=1}^{m_j}  \big(A_{k,i}* B_{k,i}\big)\ot \big(C_{l,j}* D_{l,j}\big)\\
   &= \Bigg(\lim_{i \to +\infty} \sum_{k=1}^{n_i} A_{k,i}* B_{k,i}\Bigg) \ot\Bigg(\lim_{j \to +\infty}\sum_{l=1}^{m_j}  C_{l,j}* D_{l,j}\Bigg)\\
   &= 0\hspace{1cm} \text{by (3.1)}.
\end{align*}
We prove that $S_{p}^I \otp S_{p^*}^I\ot \Ker\big(\psi_p^I\big)
\subset \Ker\big(\psi_{p}^{I\times I}\beta \alpha\big)$ by a similar
computation. The proof is complete.
\end{proof}

Now, we define the map $V\colon \Mat_I^\fin \ot \Mat_I^\fin \xra{}
\Mat_I^\fin \ot \Mat_I^\fin$ by $V(e_{ij}\ot e_{kl})= \delta_{kl}\
e_{ik}\ot e_{kj}$.

\begin{prop}
With respect to trace duality, the map $W\colon \Mat_I^\fin \ot
\Mat_I^\fin \to \Mat_I^\fin \ot \Mat_I^\fin$ defined by
$$
W(e_{ij}\ot e_{kl})= \delta_{jk}\ e_{il}\ot e_{jj}
$$
is the dual map of $V$. Moreover, the map $V$ induces a partial
isometry $V\colon S_{2}^I \ot_{2} S_{2}^I \to S_{2}^I \ot_{2}
S_{2}^I$.
\end{prop}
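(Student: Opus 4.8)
The plan is to verify both assertions by a direct computation on the elementary tensors $e_{ij}\ot e_{kl}$, which form a basis of $\Mat_I^\fin\ot\Mat_I^\fin$ and on which $V$ and $W$ are defined.

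For the duality statement, I would first record that the trace pairing on $\Mat_I^\fin$ is the bilinear form $\langle e_{ij},e_{kl}\rangle=\tr(e_{ij}e_{kl}^T)=\delta_{ik}\delta_{jl}$, so that on the tensor square $\langle e_{ij}\ot e_{kl},e_{i'j'}\ot e_{k'l'}\rangle=\delta_{ii'}\delta_{jj'}\delta_{kk'}\delta_{ll'}$. To obtain $W=V^*$ it then suffices to check, on all pairs of basis vectors, the identity $\langle V(e_{ij}\ot e_{kl}),e_{i'j'}\ot e_{k'l'}\rangle=\langle e_{ij}\ot e_{kl},W(e_{i'j'}\ot e_{k'l'})\rangle$. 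Plugging in the definitions, the left-hand side equals $\delta_{kl}\delta_{ii'}\delta_{kj'}\delta_{kk'}\delta_{jl'}$ and the right-hand side equals $\delta_{j'k'}\delta_{ii'}\delta_{jl'}\delta_{kj'}\delta_{lj'}$; I would then check that both products are nonzero exactly when $i=i'$, $j=l'$, $k=l$ and $j'=k'=k$, in which case each equals $1$. Hence the two maps are adjoint for the trace duality.

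For the second assertion I would work inside the Hilbert space $S_2^I\ot_2 S_2^I=S_2^{I\times I}$, whose orthonormal basis is $\{e_{ij}\ot e_{kl}\}$. The key observation is that $V(e_{ij}\ot e_{kl})=0$ whenever $k\neq l$, so that $V$ vanishes on the closed subspace $N$ spanned by $\{e_{ij}\ot e_{kl}:k\neq l\}$, while on its orthogonal complement $M$, the closed linear span of $\{e_{ij}\ot e_{kk}\}$, it acts by $V(e_{ij}\ot e_{kk})=e_{ik}\ot e_{kj}$. Since the triple $(i,j,k)$ is recovered from $e_{ik}\ot e_{kj}$ (its first index, its last index, and the common middle index), distinct triples give distinct image vectors, so $\{e_{ik}\ot e_{kj}:i,j,k\in I\}$ is again an orthonormal family. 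Consequently $V$ maps the orthonormal basis $\{e_{ij}\ot e_{kk}\}$ of $M$ onto an orthonormal set and annihilates $M^\perp=N$; this shows $\norm{V}\le 1$ on the dense domain $\Mat_I^\fin\ot\Mat_I^\fin$, so $V$ extends by continuity, and the extension is isometric on $M$ and zero on $N$, that is, a partial isometry with initial space $M$.

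The arguments are elementary, and the only delicate point --- where I expect the genuine, if modest, effort to lie --- is the index bookkeeping: in the first part, confirming that the two strings of Kronecker symbols single out the very same configuration of indices, and in the second, verifying that distinct triples $(i,j,k)$ yield distinct vectors $e_{ik}\ot e_{kj}$, so that $V$ really preserves norms on $M$ rather than merely sending basis vectors to unit vectors.
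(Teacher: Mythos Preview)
Your proof is correct and follows essentially the same approach as the paper: both verify the duality by computing the trace pairing on elementary tensors and matching the resulting Kronecker-delta expressions. The paper dismisses the partial isometry statement as ``clear''; your argument spelling out the initial space $M=\overline{\operatorname{span}}\{e_{ij}\ot e_{kk}\}$ and checking that $V$ sends its orthonormal basis to an orthonormal family is a welcome elaboration of exactly what the paper has in mind.
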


\begin{proof}
For all $i,j,k,l,r,s,t,u\in I$, we have
\begin{align*}
 \tr \Big(V(e_{ij}\ot e_{kl})(e_{rs}\ot e_{tu})^T\Big)
   &= \delta_{kl} \tr\Big((e_{ik}\ot e_{kj})\big(e_{rs}^T\ot e_{tu}^T\big)\Big)\\
   &= \delta_{kl} \tr\big(e_{ik}e_{rs}^T\big) \tr\big(e_{kj}e_{tu}^T\big) \\
   &=  \delta_{klst}\delta_{ir}\delta_{ju}
\end{align*}
and
\begin{align*}
 \tr\Big((e_{ij}\ot e_{kl})\big(W(e_{rs}\ot e_{tu})\big)^T\Big)
   &= \delta_{st} \tr\Big((e_{ij}\ot e_{kl})(e_{ru}\ot e_{ss})^T\Big)\\
   &= \delta_{st} \tr\big(e_{ij}e_{ru}^T\big) \tr\big(e_{kl}e_{ss}^T\big) \\
   &=  \delta_{klst}\delta_{ir}\delta_{ju}.
\end{align*}
We conclude that $W$ is the dual map of $V$.  The fact that $V$
induces a partial isometry is clear.
\end{proof}

\begin{prop}
\label{V and W extend to Sp} Suppose $1 \leq p \leq \infty$. The
maps $V\colon \Mat_I^\fin \ot \Mat_I^\fin \to \Mat_I^\fin \ot
\Mat_I^\fin$ and $W\colon \Mat_I^\fin \ot \Mat_I^\fin \to
\Mat_I^\fin \ot \Mat_I^\fin$ admit completely contractive extensions
$V\colon S_{p}^I(S_{p}^I) \to S_{p}^I(S_{p}^I)$ and $W\colon
S_{p}^I(S_{p}^I) \to S_{p}^I(S_{p}^I)$.
\end{prop}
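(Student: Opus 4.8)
The plan is to factor each of $V$ and $W$ as a composition of standard completely contractive maps on $S_p^I(S_p^I)=S_p^{I\times I}$, so that complete contractivity holds for every $1\le p\le\infty$ at once, with no need to treat the endpoints separately. Throughout I use the identifications $S_p^I(S_p^I)=S_p^{I\times I}$ and $e_{ij}\ot e_{kl}=E_{(i,k),(j,l)}$, where $E_{(i,k),(j,l)}$ is the matrix unit of $\Mat_{I\times I}$. Since $\Mat_I^\fin\ot\Mat_I^\fin$ is dense in $S_p^I(S_p^I)$ for every $p$, it suffices to produce completely contractive maps on $S_p^{I\times I}$ agreeing with $V$ and $W$ on matrix units; the resulting extensions are then automatically unique.

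The building blocks, all acting on $S_p^{I\times I}$, are the following. Let $D$ and $D'$ be the Schur multipliers on $S_p^{I\times I}$ whose symbols, at the entry indexed by row $(i,k)$ and column $(j,l)$, equal $\delta_{kl}$ and $\delta_{jk}$ respectively, so that $D(e_{ij}\ot e_{kl})=\delta_{kl}\,e_{ij}\ot e_{kl}$ and $D'(e_{ij}\ot e_{kl})=\delta_{jk}\,e_{ij}\ot e_{kl}$. Writing $\delta_{kl}=\langle \xi_k,\xi_l\rangle_{\ell_2^I}$ and $\delta_{jk}=\langle \xi_k,\xi_j\rangle_{\ell_2^I}$ for the canonical basis $(\xi_m)_{m\in I}$ of $\ell_2^I$ exhibits both symbols as given by unit vectors, so $D$ and $D'$ are completely contractive Schur multipliers on $S_\infty^{I\times I}$, hence, by Proposition \ref{prop inclusion} together with $\frak{M}_{\infty,cb}^{I\times I}=\frak{M}_{1,cb}^{I\times I}$, completely contractive on $S_p^{I\times I}$ for every $1\le p\le\infty$. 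Finally let $U\colon \ell_2^I\ot_2\ell_2^I\to \ell_2^I\ot_2\ell_2^I$ be the flip $\xi\ot\eta\mapsto\eta\ot\xi$, and let $R_U(X)=XU$; right multiplication by the unitary $U$ is completely isometric on every $S_p^{I\times I}$.

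A direct check on matrix units gives
$$
R_U(e_{ij}\ot e_{kl})=e_{il}\ot e_{kj},
$$
whence $R_U\circ D(e_{ij}\ot e_{kl})=\delta_{kl}\,e_{ik}\ot e_{kj}=V(e_{ij}\ot e_{kl})$ and $R_U\circ D'(e_{ij}\ot e_{kl})=\delta_{jk}\,e_{il}\ot e_{jj}=W(e_{ij}\ot e_{kl})$. Thus $V=R_U\circ D$ and $W=R_U\circ D'$ on $\Mat_I^\fin\ot\Mat_I^\fin$, each a composition of completely contractive maps on $S_p^I(S_p^I)$, which proves the proposition. The only real content is spotting this factorization: once one observes that $V$ and $W$ differ from right multiplication by the flip only by a diagonal-type truncation, the remaining verifications (the two index chases, the complete contractivity of a Schur multiplier with rank-one data, and of right multiplication by a unitary) are routine and uniform in $p$. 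I expect the main obstacle to be precisely the discovery of the building blocks; as a fallback one can instead deduce complete contractivity at $p=\infty$ from the same factorization, obtain the case $p=1$ from the identity $W=V^*$ of the previous proposition, and interpolate via $S_p^{I\times I}=\big(S_\infty^{I\times I},S_1^{I\times I}\big)_{1/p}$.
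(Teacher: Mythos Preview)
Your argument is correct, and it takes a genuinely different route from the paper's. The paper proves complete contractivity of $V$ and $W$ on $S_\infty^I(S_\infty^I)$ by an explicit norm computation: it multiplies (on the left, or conjugates) by the same flip unitary $U=\sum_{r,s}e_{rs}\ot e_{sr}$ to turn $(Id\ot V)(B)$ and $(Id\ot W)(B)$ into block-diagonal matrices whose blocks are submatrices of $B$, then bounds by the norm of $B$; from there it passes to $p=1$ via $W=V^*$ and to general $p$ by interpolation. Your factorization $V=R_U\circ D$, $W=R_U\circ D'$ with $D,D'$ Schur multipliers of rank-one data packages the same flip idea more cleanly: each factor is completely contractive on every $S_p^{I\times I}$ at once, so no separate endpoint analysis, duality step, or interpolation is needed. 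What the paper's approach buys is self-containment (it does not appeal to the $\langle a_r,b_c\rangle$ characterization of $\frak{M}_{\infty,cb}$ or to the inclusions of Proposition~\ref{prop inclusion}); what your approach buys is brevity and a structural explanation of why the result is uniform in $p$. Two minor remarks: the Schur multiplier $D$ is simply $Id\ot P_{\mathrm{diag}}$, so for that factor you do not even need the rank-one characterization; and your appeal to Proposition~\ref{prop inclusion} is for the index set $I\times I$, which is of course covered since that proposition is stated for an arbitrary index set.
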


\begin{proof}
We first prove that $V$ and $W$ admit completely contractive
extensions from $S_\infty^I(S_\infty^I)$ into
$S_\infty^I(S_\infty^I)$. Suppose that $B= \sum_{i,j,k,l\in J}
b_{ijkl} \ot e_{ij} \ot e_{kl} \in \Mat_\fin \ot \Mat_I^\fin
\ot\Mat_I^\fin$ with $J\in \mathcal{P}_f(I)$ and $b_{ijkl} \in
\Mat_\fin$ for all $i,j,k,l\in J$. Note that the matrix
$U=\displaystyle\sum_{r,s\in J} e_{rs}\ot e_{sr}\displaystyle$ of
$S_{\infty}^J(S_{\infty}^J)$ is unitary. Then we have
\begin{align*}
 \MoveEqLeft  \Bnorm{ (Id_{S_\infty} \ot V)(B)}_{S_\infty(S_\infty^I(S_\infty^I))} =\Bgnorm{\sum_{i,j,k\in J} b_{ijkk}\ot e_{ik} \ot e_{kj}}_{S_\infty(S_\infty^I(S_\infty^I))}\\
   &= \Bgnorm{\Bigg(I_{S_\infty}\ot\Bigg(\sum_{r,s\in J} e_{rs}\ot e_{sr}\Bigg)\Bigg)\Bigg(\sum_{i,j,k\in J} b_{ijkk}\ot  e_{ik} \ot e_{kj}\Bigg)}_{S_\infty(S_\infty^I(S_\infty^I))} \\
   &= \Bgnorm{\sum_{r,s,i,j,k\in J} b_{ijkk} \ot e_{rs}e_{ik} \ot e_{sr}e_{kj}}_{S_\infty(S_\infty^I(S_\infty^I))} \\
   &= \Bgnorm{\sum_{i,j,k\in J} b_{ijkk}\ot  e_{kk} \ot e_{ij}}_{S_\infty(S_\infty^I(S_\infty^I))} \\
   &= \Bgnorm{\sum_{k\in J} e_{kk} \ot \Bigg(\sum_{i,j\in I} b_{ijkk}\ot  e_{ij}\Bigg)}_{S_\infty^I(S_\infty(S_\infty^I))} \\
   &= \max_{ k \in J}  \Bgnorm{\sum_{i,j\in I} b_{ijkk}\ot e_{ij}}_{S_\infty(S_\infty^I)} \\
   & \leq  \norm{B}_{S_\infty(S_\infty^I(S_\infty^I))}\hspace{0.4cm} \text{(submatrices)}
\end{align*}
and
\begin{align*}
\MoveEqLeft \Bnorm{ (Id_{S_\infty} \ot
W)(B)}_{S_\infty(S_\infty^I(S_\infty^I))}
   =  \Bgnorm{\sum_{i,j,l\in J} b_{ijjl}\ot e_{il} \ot e_{jj}}_{S_\infty(S_\infty^I(S_\infty^I))}\\
   &= \Bgnorm{\big(I_{S_\infty} \ot U\big)\Bigg(\sum_{i,j,l\in J} b_{ijjl}\ot  e_{il} \ot e_{jj}\Bigg)\big(I_{S_\infty}\ot U\big)}_{S_\infty(S_\infty^I(S_\infty^I))} \\ 
   &= \Bgnorm{\sum_{r,s,i,j,l,t,u\in J} b_{ijjl} \ot e_{rs}e_{il}e_{tu} \ot e_{sr}e_{jj}e_{ut}}_{S_\infty(S_\infty^I(S_\infty^I))} \\
   &= \Bgnorm{\sum_{i,j,l\in J} b_{ijjl} \ot  e_{jj} \ot e_{il}}_{S_\infty(S_\infty^I(S_\infty^I))} \\
   &= \Bgnorm{\sum_{j \in J}e_{jj} \ot \Bigg(\sum_{i,l\in J} b_{ijjl}\ot  e_{il}\Bigg)}_{S_\infty^I(S_\infty(S_\infty^I))} \\
   &= \max_{j \in J}  \Bgnorm{\sum_{i,l\in J} b_{ijjl}\ot e_{il}}_{S_\infty(S_\infty^I)}\\
   &\leq \Bgnorm{\sum_{i,j,k,l\in J} b_{ijkl} \ot e_{kj} \ot e_{il}}_{S_\infty(S_\infty^I(S_\infty^I))}\hspace{0.4cm} \text{(submatrices)}\\
   &= \Bgnorm{\Bigg(I_{S_\infty}\ot\Bigg(\sum_{r,s\in J} e_{rs}\ot e_{sr}\Bigg)\Bigg)\Bigg(\sum_{i,j,k,l\in J} b_{ijkl} \ot e_{kj} \ot e_{il}\Bigg)}_{S_\infty(S_\infty^I(S_\infty^I))}  \\
   &= \Bgnorm{\sum_{r,s,i,j,k\in J} b_{ijkl} \ot e_{rs}e_{kj} \ot e_{sr}e_{il}}_{S_\infty(S_\infty^I(S_\infty^I))} \\
   &= \norm{B}_{S_\infty(S_\infty^I(S_\infty^I))}\\
\end{align*}
Then we deduce the claim. Hence, by duality, the maps $V^*\colon
S_{1}^I\big(S_{1}^I\big) \to S_{1}^I\big(S_{1}^I\big)$ and
$W^*\colon S_{1}^I\big(S_{1}^I\big) \to S_{1}^I\big(S_{1}^I\big)$
are completely contractive. Moreover, we know that $W=V^*$. By
interpolation between $p=1$ and $p=\infty$, we obtain that the maps
$V\colon S_{p}^I\big(S_{p}^I\big) \to S_{p}^I\big(S_{p}^I\big)$ and
$W\colon S_{p}^I\big(S_{p}^I\big) \to S_{p}^I\big(S_{p}^I\big)$ are
completely contractive.
\end{proof}

Now, we define the linear map
$$
\begin{array}{cccc}
  \Delta:  &  \Mat_I   &  \longrightarrow   &  \Mat_{I\times I}  \\
           &   A  &  \longmapsto            &  [a_{ts}\delta_{ur}]_{(t,r),(u,s) \in I\times I}.  \\
\end{array}
$$

\begin{prop}
\label{the diag commutes} Let $1 \leq p \leq \infty$. Suppose that
$M_A\colon S_p^I \to S_p^I$ is a completely bounded  Schur
multiplier on $S_p^I$ associated with a matrix $A$ of $\Mat_I$. Then
the map $V\big(M_{A} \ot Id_{S_p^I}\big)W$ is a bounded Schur
multiplier on $S_p^I(S_p^I)$. Its associated matrix is $\Delta(A)$.
\end{prop}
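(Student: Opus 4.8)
The plan is to verify both assertions by a direct computation on the matrix units, which span the dense subspace $\Mat_I^\fin \ot \Mat_I^\fin$ of $S_p^I(S_p^I)$. Recall the identification $e_{ij}\ot e_{kl}=E_{(i,k),(j,l)}$ of $\Mat_I\ot\Mat_I$ with $\Mat_{I\times I}$, under which a Schur multiplier on $S_p^I(S_p^I)=S_p^{I\times I}$ with matrix $\Phi$ sends each $E_{(t,r),(u,s)}$ to $\phi_{(t,r),(u,s)}E_{(t,r),(u,s)}$. Thus, to identify the associated matrix of $V\big(M_A\ot Id_{S_p^I}\big)W$, it suffices to show that this map acts diagonally on each $e_{ij}\ot e_{kl}$ with the scalar prescribed by $\Delta(A)$.

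First I would apply the three maps in turn. By definition of $W$, we have $W(e_{ij}\ot e_{kl})=\delta_{jk}\,e_{il}\ot e_{jj}$; since $M_A$ acts on the first tensor leg, $M_A\ot Id_{S_p^I}$ multiplies this by $a_{il}$, giving $\delta_{jk}a_{il}\,e_{il}\ot e_{jj}$; and finally $V(e_{il}\ot e_{jj})=e_{ij}\ot e_{jl}$ by definition of $V$ (the internal Kronecker factor being $\delta_{jj}=1$). Hence
$$
V\big(M_A\ot Id_{S_p^I}\big)W(e_{ij}\ot e_{kl})=\delta_{jk}\,a_{il}\,e_{ij}\ot e_{jl}.
$$

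The decisive point is a collapse forced by the Kronecker factor. Writing the input as $E_{(t,r),(u,s)}$, i.e. $t=i$, $r=k$, $u=j$, $s=l$, the output reads $\delta_{ur}\,a_{ts}\,E_{(t,u),(u,s)}$. Whenever $\delta_{ur}\neq 0$ we have $u=r$, so $E_{(t,u),(u,s)}=E_{(t,r),(u,s)}$ coincides with the input matrix unit, and therefore
$$
V\big(M_A\ot Id_{S_p^I}\big)W(E_{(t,r),(u,s)})=a_{ts}\delta_{ur}\,E_{(t,r),(u,s)}=[\Delta(A)]_{(t,r),(u,s)}\,E_{(t,r),(u,s)}.
$$
This exhibits the map as diagonal on the matrix units with matrix $\Delta(A)$. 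I expect this index bookkeeping — in particular, recognizing that although neither $V$ nor $W$ is itself a Schur multiplier, the Kronecker factors force the composition back onto the diagonal — to be the only genuine subtlety of the argument.

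It then remains to record boundedness. Since $M_A$ is completely bounded on $S_p^I$, the map $M_A\ot Id_{S_p^I}$ is bounded on $S_p^I(S_p^I)$ (this is exactly the content of complete boundedness, via the flip identification $S_p^I(S_p^I)=S_p^{I\times I}=S_p^I(S_p^I)$), while $V$ and $W$ are completely contractive on $S_p^I(S_p^I)$ by Proposition \ref{V and W extend to Sp}. Composing, $V\big(M_A\ot Id_{S_p^I}\big)W$ is bounded. A bounded operator sending each matrix unit to a scalar multiple of itself is by definition a bounded Schur multiplier, so the computation above shows that $V\big(M_A\ot Id_{S_p^I}\big)W$ is the bounded Schur multiplier on $S_p^I(S_p^I)$ with associated matrix $\Delta(A)$, as claimed.
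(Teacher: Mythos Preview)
Your proof is correct and follows essentially the same approach as the paper: a direct computation on the matrix units $e_{ij}\ot e_{kl}$ showing that $V(M_A\ot Id_{S_p^I})W$ acts as multiplication by $[\Delta(A)]_{(t,r),(u,s)}=a_{ts}\delta_{ur}$. The paper computes $M_{\Delta(A)}(e_{ij}\ot e_{kl})$ and $V(M_A\ot Id)W(e_{ij}\ot e_{kl})$ separately and matches them (writing the common value as $\delta_{jk}a_{il}\,e_{ik}\ot e_{kl}$, which agrees with your $\delta_{jk}a_{il}\,e_{ij}\ot e_{jl}$ once the Kronecker factor forces $j=k$), while you make the boundedness argument more explicit than the paper does.
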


\begin{proof}
If $i,j,k,l \in I$ and $M_A \in \frak{M}_{p,cb}^I$, we have
\begin{align*}
  M_{\Delta(A)}(e_{ij}\ot e_{kl})
   &= \Big([a_{ts}\delta_{ur}]_{(t,r),(u,s) \in I\times I}\Big)* \Big([\delta_{it}\delta_{ju}\delta_{kr}\delta_{ls}]_{(t,r),(u,s) \in I\times I}\Big)\\
   &= \delta_{jk} a_{il}\Big([\delta_{it}\delta_{ju}\delta_{kr}\delta_{ls}]_{(t,r),(u,s) \in I\times I}\Big)\\
   &= \delta_{jk} a_{il}e_{ik}\ot e_{kl}
\end{align*}
and
\begin{align*}
  V\big(M_{A} \ot Id_{S_p^I}\big)W(e_{ij}\ot e_{kl})
   &=  \delta_{jk}V\big(M_{A} \ot Id_{S_p^I}\big)(e_{il}\ot e_{jj})\\
   &=  \delta_{jk}a_{il}V(e_{il}\ot e_{kk})\\
   &=  \delta_{jk}a_{il} e_{ik} \ot e_{kl}.\qedhere
\end{align*}
\end{proof}

Recall that, for all operator spaces $E$ and $F$, the map $R\ot T
\mapsto R \ot T$ is completely contractive  from $CB(E) \otp CB(F)$
into $CB\big(E \ot_{\min} F\big)$ and from $CB(E) \otp CB(F)$ into
$CB\big(E \otp F\big)$ (see \cite[Proposition 5.11]{BlP}).

\begin{prop}
\label{theo tensor By IdSp}
Suppose $1 \leq p \leq \infty$. Let
$I,J$ be any sets. The map
$$
\begin{array}{cccc}
    &   CB\big(S_p^I\big)  &  \longrightarrow   &  CB\big(S_p^I(S_p^J)\big)  \\
    &   T  &  \longmapsto       & T \ot Id_{S_p^J}   \\
\end{array}
$$
is a complete contraction.
\end{prop}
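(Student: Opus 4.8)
The plan is to read off the completely bounded norm of $\Phi(T):=T\ot Id_{S_p^J}$ directly from Pisier's description of completely bounded maps on Schatten spaces (\cite[Lemma 1.7]{Pis2}, already used in \S2), rather than through the factorisation $CB(S_p^I)\to CB(S_p^I)\otp CB(S_p^J)\to CB(S_p^I\ot_{\min}S_p^J)$. The latter would use the first map of \cite[Proposition 5.11]{BlP} recalled above together with the contractive map $T\mapsto T\ot Id_{S_p^J}$ (tensoring with the norm-one element $Id_{S_p^J}$), but it identifies the target only when $p=\8$, so it does not by itself settle intermediate $p$.

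For $1\le p<\8$ I would argue as follows. Fix $T\in CB(S_p^I)$. Since $S_p^I(S_p^J)=S_p^{I\times J}$ is again a Schatten space, Pisier's characterisation gives
$$\norm{\Phi(T)}_{cb,S_p^I(S_p^J)\to S_p^I(S_p^J)}=\Bnorm{Id_{S_p}\ot\big(T\ot Id_{S_p^J}\big)}_{B(S_p(S_p^I(S_p^J)))}.$$
Now I would use the complete isometries $S_p^A(S_p^B)=S_p^B(S_p^A)=S_p^{A\times B}$ recorded in \S2 to regroup the tensor factors: under the canonical flip $S_p(S_p^I(S_p^J))=S_p(S_p^J)(S_p^I)$ the operator $Id_{S_p}\ot T\ot Id_{S_p^J}$ (the identity on the outer $S_p$ and on the $S_p^J$ factor, $T$ on the $S_p^I$ factor) becomes $Id_{S_p(S_p^J)}\ot T$. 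Writing $S_p(S_p^J)=S_p^{\nn\times J}$, the displayed quantity equals $\bnorm{Id_{S_p^{\nn\times J}}\ot T}_{B(S_p^{\nn\times J}(S_p^I))}$. Finally, for any index set $M$ the ampliation $Id_{S_p^M}\ot T$ on the honest Schatten space $S_p^M(S_p^I)$ has norm at most $\norm{T}_{cb,S_p^I\to S_p^I}$: reduce to finite $M$ by the submatrix (truncation) property, and embed a finite $M$ as a corner of $\nn$ so that $\bnorm{Id_{S_p^M}\ot T}\le\bnorm{Id_{S_p}\ot T}=\norm{T}_{cb}$. This yields $\norm{\Phi(T)}_{cb}\le\norm{T}_{cb}$.

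For $p=\8$ the space $S_\8^I(S_\8^J)$ is the spatial tensor product $S_\8^I\ot_{\min}S_\8^J$, and the corresponding bound is the classical fact that ampliation by the identity is completely contractive for the minimal tensor product, i.e.\ exactly the first map of \cite[Proposition 5.11]{BlP} applied to $T\ot Id_{S_\8^J}$. To upgrade the above to \emph{complete} contractivity for $1\le p<\8$, I would run the identical reduction one matrix level up: an element of $M_n(CB(S_p^I))$ is a completely bounded map $S_p^I\to M_n(S_p^I)$, and the completely isometric form of Pisier's characterisation identifies its $M_n(CB(S_p^I))$-norm with the norm of the corresponding $Id_{S_p}$-ampliation. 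Applying this to both $CB(S_p^I)$ and $CB(S_p^I(S_p^J))$, and using that the flip $S_p(S_p^I(S_p^J))=S_p(S_p^J)(S_p^I)$ is completely isometric, the scalar computation carries over verbatim and gives $\bnorm{(Id_{M_n}\ot\Phi)\big([T_{ab}]\big)}\le\bnorm{[T_{ab}]}$ for every $n$.

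The regrouping identities and the submatrix estimate are routine; the step that must be handled with care, and which I expect to be the main obstacle, is the passage from the isometric statement of Pisier's lemma to its completely isometric form, namely that the operator space structure on $CB(S_p^K)$ defined through $M_n(CB(S_p^K))=CB\big(S_p^K,M_n(S_p^K)\big)$ is faithfully recovered, at every matrix level, by the single ampliation $Id_{S_p}\ot(\cdot)$ acting on the genuine Schatten space $S_p(S_p^K)$. Once this is granted the complete contraction follows immediately from the one-line norm computation.
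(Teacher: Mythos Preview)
Your argument is correct and takes a genuinely different route from the paper.

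The paper proves the proposition by interpolation in the $J$-variable. It uses the identifications $S_\infty^J(S_p^I)=S_\infty^J\ot_{\min}S_p^I$ and $S_1^J(S_p^I)=S_1^J\otp S_p^I$, obtains the two endpoint complete contractions $CB(S_p^I)\to CB(S_q^J(S_p^I))$ for $q\in\{1,\infty\}$ from the Blecher--Paulsen map $CB(E)\otp CB(F)\to CB(E\ot F)$, interpolates, and then composes with the canonical complete contraction $(CB(E_0),CB(E_1))_\theta\to CB(E_\theta)$ together with Fubini. Complete contractivity comes for free from the interpolation machinery.

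Your approach bypasses interpolation entirely: you compute $\norm{\Phi(T)}_{cb}$ directly as an $Id_{S_p}$-ampliation norm via Pisier's Lemma~1.7, regroup by Fubini to turn it into $\bnorm{Id_{S_p^{\nn\times J}}\ot T}$, and reduce by truncation to $\bnorm{Id_{S_p}\ot T}=\norm{T}_{cb}$. The lift to complete contractivity does work: the key facts are that Pisier's lemma applies to maps $S_p^I\to M_n(S_p^I)$ and that $S_p[M_n(E)]=M_n(S_p[E])$ (Pisier's Fubini), which together give exactly the completely isometric embedding $CB(S_p^K)\hookrightarrow B(S_p(S_p^K))$ you need. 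You are right to flag this as the step requiring justification; it is not stated in the paper, though it follows from Pisier's framework. One bonus of your method is that it makes the Remark following the proposition (that $\Phi$ is in fact a complete isometry) immediate: the reverse inequality comes from embedding $\nn\hookrightarrow\nn\times J$. The paper's interpolation argument, by contrast, gives only the contraction and leaves the isometry as a separate observation.
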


\begin{proof}
By definition, we have
$S_\infty^J\big(S_p^I\big)=S_\infty^J\ot_{\min}S_p^I$ and
$S_1^J\big(S_p^I\big)=S_1^J\otp S_p^I$ completely isometrically.
Then we obtain two complete contractions
$$
\begin{array}{ccccc}
  CB\big(S_p^I\big)  &  \longrightarrow   & CB\big(S_\infty^J\big) \otp CB\big(S_p^I\big)     & \longrightarrow  &   CB\big(S_\infty^J(S_p^I)\big)    \\
    T                &  \longmapsto       &    Id_{S_\infty^J} \ot   T      & \longmapsto      &     Id_{S_\infty^J}\ot T  \\
\end{array}
$$
and
$$
\begin{array}{ccccc}
  CB\big(S_p^I\big)  &  \longrightarrow   &  CB\big(S_1^J\big) \otp CB\big(S_p^I\big)    & \longrightarrow  &   CB\big(S_1^J(S_p^I)\big)    \\
    T                &  \longmapsto       &      Id_{S_1^J} \ot T       & \longmapsto      &   Id_{S_1^J}\ot T  . \\
\end{array}
$$
By interpolation, we obtain a completely contractive map
$$
CB\big(S_p^I\big)\xra{}\Big(CB\big(S_\infty^J(S_p^I)\big),CB\big(S_1^J(S_p^I)\big)\Big)_{\frac{1}{p}}.
$$
We conclude by composing with the complete contraction
$$
\Big(CB\big(S_\infty^J(S_p^I)\big),CB\big(S_1^J(S_p^I)\big)\Big)_{\frac{1}{p}}
\xra{} CB\big(S_p^J(S_p^I)\big)
$$
and by using the Fubini's theorem (see \cite[Theorem 1.9]{Pis2}).
\end{proof}

\begin{remark}
It is easy to see that this map is completely isometric.
\end{remark}

The next theorem is the principal result of this paper.
\pagebreak[2]
\begin{thm}
\label{th principal} Suppose $1 \leq p <\infty$. The space
$\frak{R}_{p,cb}^I$ equipped with the usual matricial product is a
completely contractive Banach algebra. More precisely, if $A$ and
$B$ are matrices of $\frak{R}_{p,cb}^I$ and $i,j\in I$, the limit
$\lim_{J}\sum_{k\in J} a_{ik}b_{kj}$ exists. Moreover, the matrix
$A.B$ of $\Mat_I$ defined by $[A.B]_{ij}=\lim_{J}\sum_{k\in J}
a_{ik}b_{kj}$ belongs to $\frak{R}_{p,cb}^I$. Finally, the map
$$
\begin{array}{cccc}
    &  \frak{R}_{p,cb}^I \otp \frak{R}_{p,cb}^I   &  \longrightarrow   &   \frak{R}_{p,cb}^I   \\
    &                 A\ot B                  &  \longmapsto       &       AB           \\
\end{array}
$$
is completely contractive.
\end{thm}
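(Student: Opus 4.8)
The plan is to obtain the multiplication as the pre-adjoint of the completely contractive map $M_A\mapsto M_{\Delta(A)}$ between the multiplier spaces, and then to factor it through $\frak{R}_{p,cb}^{I\times I}$ via Proposition \ref{link RpI and RpI2}. Thus the whole product map is to be displayed as a composition of complete contractions already produced in this section.

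First I would build the map on the multiplier side. Composing the complete contraction $T\mapsto T\ot Id_{S_p^I}$ of Proposition \ref{theo tensor By IdSp} with $\Theta_{V,W}\colon T\mapsto VTW$, which is completely contractive by Lemma \ref{lemmaVW} applied to the maps of Proposition \ref{V and W extend to Sp}, gives a complete contraction $\Lambda\colon CB(S_p^I)\to CB\big(S_p^I(S_p^I)\big)$, $T\mapsto V(T\ot Id_{S_p^I})W$. By Proposition \ref{the diag commutes}, $\Lambda(M_A)=M_{\Delta(A)}$ for every completely bounded Schur multiplier $M_A$; hence, using $S_p^I(S_p^I)=S_p^{I\times I}$ completely isometrically, $\Lambda$ restricts to a complete contraction $\Phi^*\colon \frak{M}_{p,cb}^I\to \frak{M}_{p,cb}^{I\times I}$, $M_A\mapsto M_{\Delta(A)}$.

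Next I would check that $\Phi^*$ is weak*-continuous, so that it descends to preduals. Since $\Mat_I^\fin$ is dense in $\frak{R}_{p,cb}^I$, on bounded subsets of $\frak{M}_{p,cb}^I$ weak*-convergence is exactly entrywise convergence of the matrices (test against the $e_{ij}$ and use Lemma \ref{duality Rp Mp}). As $\Delta$ merely relocates entries and $\|M_{\Delta(A_\a)}\|_{cb}\le\|M_{A_\a}\|_{cb}$, a bounded weak*-convergent net $M_{A_\a}\to M_A$ is sent to the bounded entrywise-convergent, hence weak*-convergent, net $M_{\Delta(A_\a)}\to M_{\Delta(A)}$. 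Thus $\Phi^*$ is weak*-to-weak* continuous on the unit ball, so weak*-continuous by Krein--Smulian, and therefore is the adjoint of a map $\Phi\colon \frak{R}_{p,cb}^{I\times I}\to \frak{R}_{p,cb}^I$; this $\Phi$ is completely contractive because $T\mapsto T^*$ is a complete isometry. Pairing and using $[\Delta(A)]_{(t,r),(u,s)}=a_{ts}\d_{ur}$ identifies it on tensor matrices: $\la M_A,\Phi(A'\ot B')\ra=\la M_{\Delta(A)},A'\ot B'\ra=\la M_A,A'B'\ra$, so $\Phi(A'\ot B')=A'B'$.

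Finally, composing $\Phi$ with the complete contraction $\iota\colon \frak{R}_{p,cb}^I\otp\frak{R}_{p,cb}^I\to\frak{R}_{p,cb}^{I\times I}$ of Proposition \ref{link RpI and RpI2}, which sends $A\ot B$ to the tensor matrix $A\ot B$, yields a complete contraction $\frak{R}_{p,cb}^I\otp\frak{R}_{p,cb}^I\to\frak{R}_{p,cb}^I$ carrying $A\ot B$ to $\Phi(A\ot B)=AB$; by the definition of a completely contractive Banach algebra this is the assertion. For the refined statement I set $A.B:=\Phi(A\ot B)\in\frak{R}_{p,cb}^I$ and compute its entries by pairing with $M_{e_{ij}}$: the duality formula of Lemma \ref{duality Rp Mp} together with $[\Delta(e_{ij})]_{(t,r),(u,s)}=\d_{it}\d_{js}\d_{ur}$ collapses the pairing to $\lim_J\sum_{k\in J}a_{ik}b_{kj}$, which simultaneously proves that this limit exists and equals $[A.B]_{ij}$. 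I expect the genuine obstacle to be the weak*-continuity of $\Phi^*$, since for infinite $I$ none of these spaces is reflexive so Lemma \ref{lemmaVW} alone does not give it; the entrywise/Krein--Smulian argument is the way around, and one must also verify that the net over finite subsets of $I\times I$ furnished by Lemma \ref{duality Rp Mp} reindexes to the stated limit over finite $J\subset I$.
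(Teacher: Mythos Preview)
Your strategy is exactly the one in the paper: build the completely contractive w*-continuous map $M_A\mapsto M_{\Delta(A)}$ from $\frak{M}_{p,cb}^I$ to $\frak{M}_{p,cb}^{I\times I}$ via Propositions \ref{V and W extend to Sp}, \ref{the diag commutes}, \ref{theo tensor By IdSp} and Lemma \ref{lemmaVW}, take its pre-adjoint, and precompose with the map of Proposition \ref{link RpI and RpI2}. The identification of the pre-adjoint on elementary tensors and the entry computation via Lemma \ref{duality Rp Mp} are also the same as in the paper.

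The one point where you diverge is the w*-continuity argument, and here you have slightly misread Lemma \ref{lemmaVW}. The reflexivity hypothesis there is on $E$ and $F$, not on $CB(E)$ or the multiplier spaces; in our situation $E=F=S_p^I(S_p^I)$, which \emph{is} reflexive for $1<p<\infty$ (even for infinite $I$). The paper exploits this: it disposes of $p=1$ and $p=2$ beforehand using the explicit identifications $\frak{R}_{1,cb}^I=\ell_1^I\ot_h\ell_1^I$ and $\frak{R}_{2,cb}^I=\ell_1^I\otp\ell_1^I$, and then for $1<p<\infty$ simply invokes the reflexivity clause of Lemma \ref{lemmaVW} together with the (easy) w*-continuity of $M_A\mapsto M_A\ot Id_{S_p^I}$. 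Your Krein--Smulian route via entrywise convergence on bounded sets is correct and has the mild advantage of covering all $1\le p<\infty$ in one stroke, but the ``genuine obstacle'' you anticipate is not there for $1<p<\infty$. Your reindexing remark at the end is handled by noting that the sets $J\times J$ are cofinal in $\mathcal P_f(I\times I)$; the paper sidesteps this by first computing on $\Mat_I^{\fin}$ and then appealing to the approximation in Lemma \ref{lemma approx in Rp}.
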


\begin{proof}
We have already seen that it suffices to prove the theorem with $1<
p<\infty$. If $M_A \in \frak{M}_{p,cb}^I$, by Proposition \ref{the
diag commutes}, we have the following commutative diagram
$$
\xymatrix @R=2cm @C=3cm{
    S_{p}^I(S_{p}^I) \ar[r]^{M_{\Delta(A)}} \ar[d]_{W}  & S_{p}^I(S_{p}^I)\\
    S_{p}^I(S_{p}^I) \ar[r]_{M_{A}\ot Id_{S_{p}^I}} & S_{p}^I(S_{p}^I).\ar[u]_{V}\\
  }
$$
By Proposition \ref{theo tensor By IdSp}, the map $M_A\mapsto M_A
\ot Id_{S_p^I}$ is completely contractive from $\frak{M}_{p,cb}^I$
into $\frak{M}_{p,cb}^{I\times I}$. Moreover it is easy to see that
this map is w*-continuous. Since $S_p^I\big(S_p^I\big)$ is
reflexive, by Lemma \ref{lemmaVW} and by composition, the map $M_A
\mapsto M_{\Delta(A)}$ from $\frak{M}_{p,cb}^I$ into
$\frak{M}_{p,cb}^{I\times I}$ is a complete contraction and is
w*-continuous. We denote by $\Delta_{*}\colon
\frak{R}_{p,cb}^{I\times I} \xra{} \frak{R}_{p,cb}^I$ its
preadjoint. Now, by Lemma \ref{lemma approx in Rp}, we have for all
$i,j\in I$ and for all matrices $A,B$ of $\Mat_I^{\fin}$
\begin{align*}
   \big[\Delta_{*}(A\ot B)\big]_{ij}
   &=  \Big\langle M_{e_{ij}},\Delta_{*}(A \ot B)\Big\rangle_{\frak{M}_{p,cb}^I,\frak{R}_{p,cb}^I} \\
   &=  \Big\langle M_{\Delta(e_{ij})},A \ot B\Big\rangle_{\frak{M}_{p,cb}^{I\times I},\frak{R}_{p,cb}^{I\times I}} \\
   &=  \Big\langle M_{[\delta_{it}\delta_{js}\delta_{ur}]_{(t,r),(u,s) \in I\times I}}, [a_{tu}b_{rs}]_{(t,r),(u,s) \in I\times I}\Big\rangle_{\frak{M}_{p,cb}^{I\times I},\frak{R}_{p,cb}^{I\times I}}\\
   &=  \lim_{J}\sum_{r \in J} a_{ir}b_{rj}\\
   &=  [A.B]_{ij}.
\end{align*}
Thus we conclude that, if $A,B \in \Mat_I^{\fin}$, we have
$\Delta_{*}(A\ot B)=AB$. By Proposition \ref{link RpI and RpI2} and
by density of $\Mat_I^{\fin} \ot \Mat_I^{\fin}$ in
$\frak{R}_{p,cb}^I \otp \frak{R}_{p,cb}^I$, we deduce that the map
$$
\begin{array}{cccccc}
 \Mat_I^{\fin} \ot \Mat_I^{\fin}     &     \longrightarrow     &  \frak{R}_{p,cb}^{I\times I}   &  \xra{\Delta_{*}}& \frak{R}_{p,cb}^I \\
  A \ot B                        &       \longmapsto       &         A \ot B                   &     \longmapsto  &       AB       \\
\end{array}
$$
admits a unique bounded extension from $\frak{R}_{p,cb}^I \otp
\frak{R}_{p,cb}^I$ into $\frak{R}_{p,cb}^I$. Moreover, this map is
completely contractive. Finally, we complete the proof by a
straightforward approximation argument using Lemma \ref{lemma approx
in Rp}.
\end{proof}

\begin{remark}
We do not know if the space $\frak{R}_{p}^I$ equipped with the usual
matricial product is a Banach algebra. The Banach space analogue of
Proposition \ref{theo tensor By IdSp} is false. It is the reason
which explains that the method does not work for $\frak{R}_{p}^I$.
However, note that if $\frak{M}_{p}^I=\frak{M}_{p,cb}^I$
isometrically we have $\frak{R}_{p}^I=\frak{R}_{p,cb}^I$
isometrically. For $1<p<\infty$, $p\not=2$ the equality
$\frak{M}_{p}^I=\frak{M}_{p,cb}^I$ is a classical open question.

\end{remark}


\section{Schur product}

In this section, we replace the matricial product by the Schur
product. First, it is easy to show the following proposition.

\begin{prop}
Suppose $1 \leq p < \infty$. The Banach space $\frak{R}_{p}^I$
equipped with the Schur product is a commutative Banach algebra.
\end{prop}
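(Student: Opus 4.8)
The plan is to show that the Schur product on $\frak{R}_p^I$ is a jointly continuous, associative, commutative bilinear map for which $\frak{R}_p^I$ is closed, and that the norm is submultiplicative. The key observation is that the Schur product interacts very cleanly with the projective tensor representation $\frak{R}_p^I = \Im \varphi_p^I$, where elements are represented as $C = \sum_n A_n * B_n$ with $A_n \in S_p^I$, $B_n \in S_{p^*}^I$, and the norm is the infimum of $\sum_n \norm{A_n}_{S_p^I}\norm{B_n}_{S_{p^*}^I}$ (see (\ref{expression norme})).

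\textbf{First I would} reduce everything to the bilinear map at the level of the tensor products. Given $C = \sum_n A_n * B_n$ and $D = \sum_m C_m * D_m$ in $\frak{R}_p^I$, the Schur product satisfies the elementary pointwise identity $(A*B)*(C*D) = (A*C)*(B*D)$, so that formally $C * D = \sum_{n,m} (A_n * C_m) * (B_n * D_m)$. The point is then to bound this as an element of $\frak{R}_p^I$. Here I would invoke the fact, recalled early in \S2, that the Schur product $S_p^I \times S_p^I \to S_p^I$ is contractive when \emph{one} factor lies in $S_\infty^I$; but to stay inside the $(S_p^I, S_{p^*}^I)$ pairing I would instead use that $A_n * C_m \in S_p^I$ and $B_n * D_m \in S_{p^*}^I$ with the bounds $\norm{A_n * C_m}_{S_p^I} \le \norm{A_n}_{S_p^I}\norm{C_m}_{S_\infty^I}$ and similar estimates, passing through the contractivity of $*$ from $S_p^I \times S_\infty^I \to S_p^I$. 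The cleanest route is to observe directly that $\varphi_p^I$ intertwines the Schur product on $S_p^I \otpb S_{p^*}^I$ (defined componentwise through a diagonal embedding) with the Schur product on $S_1^I$, exactly as the matricial product was handled via the map $\Delta$ in Proposition \ref{the diag commutes}, but here the relevant combinatorial map is far simpler since Schur product is entrywise.

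\textbf{The hard part will be} obtaining submultiplicativity of the norm, i.e. $\norm{C*D}_{\frak{R}_p^I} \le \norm{C}_{\frak{R}_p^I}\norm{D}_{\frak{R}_p^I}$, with the correct infimum bookkeeping. I expect to use that for fixed representations realizing the infima within $\e$, the double sum $\sum_{n,m}(A_n*C_m)\ot(B_n*D_m)$ is a valid representation of $C*D$, and that $\sum_{n,m}\norm{A_n*C_m}_{S_p^I}\norm{B_n*D_m}_{S_{p^*}^I} \le \big(\sum_n \norm{A_n}\,\norm{B_n}\big)\big(\sum_m \norm{C_m}\,\norm{D_m}\big)$ provided one of the Schur factors in each term can be estimated in the $S_\infty$ norm. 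Absolute convergence in $S_p^I \otpb S_{p^*}^I$ guarantees the rearrangement is legitimate and that $C*D$ genuinely lies in $\frak{R}_p^I$.

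\textbf{Finally I would} dispatch the remaining algebra axioms: commutativity and associativity of $*$ on $\frak{R}_p^I$ follow from the corresponding entrywise identities on $\Mat_I$, which hold on the dense subspace $\Mat_I^{\fin}$ (dense in $\frak{R}_p^I$ by the remarks following (\ref{expression norme})) and extend by the continuity of the product just established. Bilinearity is immediate. Thus $\frak{R}_p^I$ with the Schur product is a commutative Banach algebra, and the proposition follows.
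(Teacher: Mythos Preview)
Your approach is correct and is essentially the same as the paper's, which dispatches the result in a single sentence: use the norm formula (\ref{expression norme}) together with the fact (cited from \cite{BLM}) that $(S_p^I,*)$ is itself a Banach algebra, i.e.\ $\norm{X*Y}_{S_p^I}\le \norm{X}_{S_p^I}\norm{Y}_{S_p^I}$. Your detour through the estimate $\norm{A_n*C_m}_{S_p^I}\le \norm{A_n}_{S_p^I}\norm{C_m}_{S_\infty^I}\le \norm{A_n}_{S_p^I}\norm{C_m}_{S_p^I}$ simply reproves this Banach-algebra inequality, so there is no need to hedge with ``provided one of the Schur factors can be estimated in the $S_\infty$ norm''; the factorisation of the double sum goes through cleanly and gives submultiplicativity directly.
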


\begin{proof}
It suffices to use the equality (\ref{expression norme}) and the
fact that $S_p^I$ equipped with the Schur product is a Banach
algebra (see \cite{BLM} page 225).
\end{proof}

Now we will show the completely bounded analogue of this
proposition. We define the pointwise product
$$
\begin{array}{cccc}
  P:  &  \ell_{1}^I \otp \ell_{1}^I   &  \longrightarrow   &  \ell_{1}^I  \\
    &  e_i \ot e_j   &  \longmapsto       &   \delta_{ij}e_i. \\
\end{array}
$$
This map is well-defined and is completely contractive (see
\cite{BLM} page 211). Then, by tensoring, we obtain a completely
contractive map
\begin{equation}\label{first map}
P \ot P\colon \big(\ell_1^I \otp \ell_1^I\big) \ot_h \big(\ell_1^I
\otp \ell_1^I\big) \to \ell_1^I \ot_h \ell_1^I.
\end{equation}
By \cite[Theorem 6.1]{ER2}, the map
$$
\begin{array}{cccccc}
  \big(\ell_\infty^I\overline{\ot} \ell_\infty^I\big)\ot_{\sigma h}\big(\ell_\infty^I \overline{\ot}\ell_\infty^I \big)    &     \longrightarrow     &    \big(\ell_\infty^I\ot_{\sigma h} \ell_\infty^I\big)\overline{\ot}\big(\ell_\infty^I \ot_{\sigma h}\ell_\infty^I \big)    \\
                 a \ot b \ot c \ot d                    &       \longmapsto       &             a \ot c \ot b \ot d                             \\
\end{array}
$$
is completely contractive. Moreover, by \cite[(5.23)]{ER2}, we have
the following commutative diagram
$$
\xymatrix @R=2cm @C=3cm{
  \big(\ell_\infty^I\overline{\ot} \ell_\infty^I\big)\ot_{\sigma h}\big(\ell_\infty^I \overline{\ot}\ell_\infty^I \big) \ar[r]^{}   & \big(\ell_\infty^I\ot_{\sigma h} \ell_\infty^I\big)\overline{\ot}\big(\ell_\infty^I \ot_{\sigma h}\ell_\infty^I \big)\\
    \big(\ell_\infty^I\overline{\ot} \ell_\infty^I\big)\ot_{eh}\big(\ell_\infty^I \overline{\ot}\ell_\infty^I \big) \ar[r]_{} \ar@{^{(}->}[u]_{}& \ar@{^{(}->}[u]_{}\big(\ell_\infty^I\ot_{e h} \ell_\infty^I\big)\overline{\ot}\big(\ell_\infty^I \ot_{e h}\ell_\infty^I \big).\\
  }
$$
By \cite[Theorem 4.2]{ER2}, \cite[Theorem 5.3]{ER2} and by duality,
we deduce that the map
$$
\begin{array}{cccccc}
  \big(\ell_1^I \ot_h  \ell_1^I\big)   \otp  \big(\ell_1^I   \ot_h  \ell_1^I\big)     &     \longrightarrow     &    \big(\ell_1^I \otp  \ell_1^I\big)   \ot_h  \big(\ell_1^I   \otp  \ell_1^I\big)    \\
                 a \ot b \ot c \ot d                    &       \longmapsto       &             a \ot c \ot b \ot d                             \\
\end{array}
$$
is well-defined and completely contractive. Composing this map and
(\ref{first map}), we deduce a completely contractive map
$$
\begin{array}{cccccc}
  \big(\ell_1^I \ot_h  \ell_1^I\big)   \otp  \big(\ell_1^I   \ot_h  \ell_1^I\big)     &     \longrightarrow     &       \ell_1^I \ot_h   \ell_1^I       \\
                 a \ot b \ot c \ot d                    &       \longmapsto       &     P(a \ot c)\ot P(b \ot d).                           \\
\end{array}
$$
With the identification $\frak{R}_{1,cb}^I=\ell_1^I \ot_{h}
\ell_1^I$, we obtain a completely contractive map
$$
\begin{array}{cccc}
    & \frak{R}_{1,cb}^I \otp \frak{R}_{1,cb}^I  &  \longrightarrow   & \frak{R}_{1,cb}^I  \\
    &  A \ot B   &  \longmapsto       &  A*B.  \\
\end{array}
$$
This means that $\frak{R}_{1,cb}^I$ equipped with the Schur product
is a completely contractive Banach algebra. Now, recall that we have
$\frak{R}_{2,cb}^I=\ell_1^I \otp \ell_1^I$ completely isometrically.
Then, by a similar argument, $\frak{R}_{2,cb}^I$ equipped with the
Schur product is also a completely contractive Banach algebra. We
will use a strategy similar to that used in the proof of Theorem
\ref{th principal} for other values of $p$.

We start by defining the Schur multiplier $M_{E}\colon S_p^I(S_p^I)
\to S_p^I(S_p^I)$ associated with the matrix
$E=[\delta_{rt}\delta_{su}]_{(t,r),(u,s) \in I\times I}$ of $
\Mat_{I\times I}$. It is not difficult to see that $M_E$ is a
completely positive contraction. Note that, for all $i,j,k,l\in I$,
we have
\begin{align*}
 M_{E}(e_{ij}\ot e_{kl})
   &= \Big([\delta_{rt}\delta_{su}]_{(t,r),(u,s) \in I\times I}\Big)*\Big([\delta_{it}\delta_{ju}\delta_{kr}\delta_{ls}]_{(t,r),(u,s) \in I\times I}\Big) \\
   &=  \delta_{ik}\delta_{jl}[\delta_{it}\delta_{ju}\delta_{kr}\delta_{ls}]_{(t,r),(u,s) \in I\times I}\\
   &= \delta_{ik}\delta_{jl}e_{ij}\ot e_{kl}.
\end{align*}
Now, we define the linear map
$$
\begin{array}{cccc}
  \eta:  &   \Mat_I  &  \longrightarrow   &  \Mat_{I\times I}  \\
         &   A  &  \longmapsto       & [a_{rs} \delta_{rt} \delta_{su}]_{(t,r),(u,s) \in I\times I}.   \\
\end{array}
$$

\begin{prop}
\label{the diag commutes2} Let $1 \leq p \leq \infty$. Suppose that
$M_A\colon S_p^I \to S_p^I$ is a completely bounded Schur multiplier
on $S_p^I$ associated with a matrix $A$. Then the map $M_E(M_{A} \ot
Id_{S_p^I})M_E$ is a bounded Schur multiplier on $S_p^I(S_p^I)$. Its
associated matrix is $\eta(A)$.
\end{prop}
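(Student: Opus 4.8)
The plan is to imitate the direct computation used to establish Proposition~\ref{the diag commutes}: I would evaluate both $M_{\eta(A)}$ and the composition $M_E(M_A\ot Id_{S_p^I})M_E$ on the matrix units $e_{ij}\ot e_{kl}$, which span the dense subspace $\Mat_I^{\fin}\ot\Mat_I^{\fin}$, and verify that the two maps agree there. The text already records $M_E(e_{ij}\ot e_{kl})=\delta_{ik}\delta_{jl}\,e_{ij}\ot e_{kl}$, so the main work is purely a matter of tracking Kronecker symbols through the index convention $(t,r),(u,s)\in I\times I$.

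First I would compute the candidate symbol side. Forming the Schur product of $\eta(A)$ against the matrix of $e_{ij}\ot e_{kl}$ gives
$$\eta(A)*(e_{ij}\ot e_{kl})=\big[a_{rs}\,\delta_{rt}\delta_{su}\,\delta_{it}\delta_{ju}\delta_{kr}\delta_{ls}\big]_{(t,r),(u,s)\in I\times I},$$
and the product of the deltas forces $t=r=i=k$ and $u=s=j=l$, so the only surviving entry requires $i=k$ and $j=l$, in which case $a_{rs}=a_{ij}$. Hence $M_{\eta(A)}(e_{ij}\ot e_{kl})=\delta_{ik}\delta_{jl}\,a_{ij}\,e_{ij}\ot e_{kl}$.

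Next I would compute the operator side by applying the three maps in turn: the inner $M_E$ produces the factor $\delta_{ik}\delta_{jl}$, then $M_A\ot Id_{S_p^I}$ multiplies $e_{ij}$ by $a_{ij}$, and the outer $M_E$ contributes a second factor $\delta_{ik}\delta_{jl}$, which is idempotent. The result is again $\delta_{ik}\delta_{jl}\,a_{ij}\,e_{ij}\ot e_{kl}$, matching the symbol side. Thus $M_E(M_A\ot Id_{S_p^I})M_E$ and $M_{\eta(A)}$ coincide on $\Mat_I^{\fin}\ot\Mat_I^{\fin}$.

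Finally I would settle boundedness and conclude. The multiplier $M_E$ is a completely positive contraction on $S_p^I(S_p^I)$, and $M_A\ot Id_{S_p^I}$ is bounded on $S_p^I(S_p^I)$ because $M_A$ is completely bounded (this is Proposition~\ref{theo tensor By IdSp}); hence the composition $M_E(M_A\ot Id_{S_p^I})M_E$ is a bounded map on $S_p^I(S_p^I)$. Since it acts diagonally on matrix units, sending each $e_{ij}\ot e_{kl}$ to $[\eta(A)]_{(i,k),(j,l)}\,e_{ij}\ot e_{kl}$, it is exactly the Schur multiplier with symbol $\eta(A)$, which proves the claim. I expect no genuine obstacle here: the statement is the Schur-product analogue of Proposition~\ref{the diag commutes}, and the only care required is the bookkeeping of the fourfold tensor indices.
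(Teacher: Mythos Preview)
Your proposal is correct and follows essentially the same approach as the paper: both compute $M_{\eta(A)}$ and $M_E(M_A\ot Id_{S_p^I})M_E$ on the matrix units $e_{ij}\ot e_{kl}$ and verify they each give $\delta_{ik}\delta_{jl}\,a_{ij}\,e_{ij}\ot e_{kl}$. Your discussion of boundedness is slightly more explicit than the paper's, which leaves that point implicit, but the argument is the same.
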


\begin{proof}
If $i,j,k,l\in I$ and $M_A\in \frak{M}_{p,cb}^I$, we have
\begin{align}
\label{calcul Meta}
 M_{\eta(A)}(e_{ij}\ot e_{kl})
   &= \Big([a_{rs}\delta_{rt} \delta_{su}]_{(t,r),(u,s)  \in I\times I}\Big)*\Big([\delta_{it}\delta_{ju}\delta_{kr}\delta_{ls}]_{(t,r),(u,s)  \in I\times I}\Big)\nonumber\\
   &= \delta_{ik} \delta_{jl}a_{ij}[\delta_{it}\delta_{ju}\delta_{kr}\delta_{ls}]_{(t,r),(u,s)  \in I\times I}\nonumber\\
   &= \delta_{ik} \delta_{jl} a_{ij} e_{ij} \ot e_{kl}
\end{align}
and
\begin{align*}
   M_E(M_{A} \ot Id_{S_p^I})M_E(e_{ij}\ot e_{kl})
   &=  \delta_{ik} \delta_{jl}  M_{E}\big(M_{A} \ot Id_{S_p^I}\big)(e_{ij}\ot e_{kl})\\
   &=  \delta_{ik} \delta_{jl} a_{ij} e_{ij} \ot e_{kl}.\qedhere
\end{align*}
\end{proof}

\begin{thm}
\label{Rp is an algebra for Schur product} Suppose $1 \leq p <
\infty$. The space $\frak{R}_{p,cb}^I$ equipped with the Schur
product is a commutative completely contractive Banach algebra.
\end{thm}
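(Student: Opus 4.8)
The plan is to mirror exactly the strategy used for the matricial product in Theorem \ref{th principal}, replacing the maps $V,W$ there by the completely positive contraction $M_E$ and the diagonal embedding $\Delta$ by the map $\eta$. The cases $p=1$ and $p=2$ have already been disposed of directly via the identifications $\frak{R}_{1,cb}^I=\ell_1^I \ot_h \ell_1^I$ and $\frak{R}_{2,cb}^I=\ell_1^I \otp \ell_1^I$, so it suffices to treat $1<p<\infty$. For such $p$, the key structural input is Proposition \ref{the diag commutes2}, which furnishes the commutative diagram
$$
\xymatrix @R=1.5cm @C=3cm{
    S_{p}^I(S_{p}^I) \ar[r]^{M_{\eta(A)}} \ar[d]_{M_E}  & S_{p}^I(S_{p}^I)\\
    S_{p}^I(S_{p}^I) \ar[r]_{M_{A}\ot Id_{S_{p}^I}} & S_{p}^I(S_{p}^I).\ar[u]_{M_E}\\
  }
$$

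\emph{First} I would argue that the map $M_A \mapsto M_{\eta(A)}$ from $\frak{M}_{p,cb}^I$ into $\frak{M}_{p,cb}^{I\times I}$ is a completely contractive and w*-continuous isometry onto its range. Indeed, by Proposition \ref{theo tensor By IdSp} the map $M_A \mapsto M_A \ot Id_{S_p^I}$ is completely contractive from $\frak{M}_{p,cb}^I$ into $\frak{M}_{p,cb}^{I\times I}$, and it is easily seen to be w*-continuous; composing with $\Theta_{M_E,M_E}$ (in the notation of Lemma \ref{lemmaVW}, with $M_E\colon S_p^I(S_p^I)\to S_p^I(S_p^I)$ completely contractive and $S_p^I(S_p^I)$ reflexive) yields that $M_A \mapsto M_E(M_A \ot Id)M_E = M_{\eta(A)}$ is a complete contraction that is also w*-continuous. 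I then take its preadjoint $\eta_{*}\colon \frak{R}_{p,cb}^{I\times I}\to \frak{R}_{p,cb}^I$.

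\emph{Next} I would compute $\eta_{*}$ on the generators, exactly as in the proof of Theorem \ref{th principal}. Using Lemma \ref{lemma approx in Rp} (the duality formula \eqref{duality Rp-Mp}) together with the matrix $\eta(e_{ij})=[\delta_{it}\delta_{js}\delta_{rt}\delta_{su}]_{(t,r),(u,s)}$, one finds for $A,B\in \Mat_I^{\fin}$ that
$$
\big[\eta_{*}(A\ot B)\big]_{ij}
=\Big\langle M_{\eta(e_{ij})},\,[a_{tu}b_{rs}]\Big\rangle_{\frak{M}_{p,cb}^{I\times I},\frak{R}_{p,cb}^{I\times I}}
= a_{ij}b_{ij}=[A*B]_{ij},
$$
so that $\eta_{*}(A\ot B)=A*B$ on the dense subspace $\Mat_I^{\fin}\ot \Mat_I^{\fin}$. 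By Proposition \ref{link RpI and RpI2} there is a canonical complete contraction $\frak{R}_{p,cb}^I \otp \frak{R}_{p,cb}^I \to \frak{R}_{p,cb}^{I\times I}$ sending $A\ot B$ to $A\ot B$; composing it with $\eta_{*}$ produces the desired completely contractive Schur-product map $\frak{R}_{p,cb}^I \otp \frak{R}_{p,cb}^I \to \frak{R}_{p,cb}^I$. Commutativity follows since $A*B=B*A$ entrywise on $\Mat_I^{\fin}$ and this dense subspace determines the product.

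\emph{The main obstacle} I anticipate is purely bookkeeping: verifying that $M_E(M_A\ot Id)M_E=M_{\eta(A)}$ is genuinely a Schur multiplier with the claimed symbol (supplied by Proposition \ref{the diag commutes2}) and confirming the w*-continuity needed to extract the preadjoint $\eta_{*}$. Everything else is a routine transcription of the matricial-product argument, with the single essential difference that $M_E$ is itself a (completely positive) Schur multiplier rather than the partial-isometry-type maps $V,W$; consequently the formula $M_E(M_A\ot Id)M_E$ lands correctly on the Schur-diagonal pattern $\eta(A)$ that encodes the pointwise product, and the proof concludes by the same density-and-approximation step invoking Lemma \ref{lemma approx in Rp}.
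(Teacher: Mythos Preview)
Your proposal is correct and follows essentially the same approach as the paper's own proof: reduce to $1<p<\infty$, use Proposition~\ref{the diag commutes2} together with Lemma~\ref{lemmaVW} and Proposition~\ref{theo tensor By IdSp} to see that $M_A\mapsto M_{\eta(A)}$ is completely contractive and w*-continuous, take the preadjoint $\eta_*$, identify $\eta_*(A\ot B)=A*B$ on $\Mat_I^{\fin}$, and conclude via Proposition~\ref{link RpI and RpI2} and Lemma~\ref{lemma approx in Rp}. The only superfluous remark is your claim that $M_A\mapsto M_{\eta(A)}$ is an ``isometry onto its range''; this is neither proved nor needed, so you should simply drop it.
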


\begin{proof}
We have already seen that it suffices to prove the theorem with $1<
p< \infty$. If $M_A \in \frak{M}_{p,cb}^I$, by Proposition \ref{the
diag commutes2}, we have the following commutative diagram
$$
\xymatrix @R=2cm @C=3cm{
    S_{p}^I\big(S_{p}^I\big) \ar[r]^{M_{\eta(A)}} \ar[d]_{M_{E}}  & S_{p}^I\big(S_{p}^I\big)  \\
    S_{p}^I\big(S_{p}^I\big) \ar[r]_{M_{A}\ot Id_{S_{p}^I}} & S_{p}^I\big(S_{p}^I\big).\ar[u]_{M_{E}}
}
$$
We have already seen that the map $M_A \mapsto M_A \ot Id_{S_p^I}$
is completely contractive from $\frak{M}_{p,cb}^I $ into
$\frak{M}_{p,cb}^{I \times I} $ and w*-continuous. Since
$S_p^I\big(S_p^I\big)$ is reflexive, by Lemma \ref{lemmaVW} and by
composition, the map $M_A \mapsto M_{\eta(A)}$ from
$\frak{M}_{p,cb}^I$ into $\frak{M}_{p,cb}^{I\times I}$ is a complete
contraction and is w*-continuous.

We denote by $\eta_{*}\colon \frak{R}_{p,cb}^{I\times I} \to
\frak{R}_{p,cb}^I$ its preadjoint. Now, by Lemma \ref{lemma approx
in Rp}, we have for all $i,j\in I$ and for all matrices $A,B$ of
$\Mat_I^{\fin}$
\begin{align*}
   \big[\eta_{*}(A\ot B)\big]_{ij}
   &=  \Big\langle M_{e_{ij}},\eta_{*}(A \ot B) \Big\rangle_{\frak{M}_{p,cb}^I,\frak{R}_{p,cb}^I} \\
   &=  \Big\langle M_{\eta(e_{ij})},A \ot B \Big\rangle_{\frak{M}_{p,cb}^{I\times I},\frak{R}_{p,cb}^{I\times I}} \\
   &= \Big\langle M_{[\delta_{ir}\delta_{js}\delta_{rt}\delta_{su}]_{(t,r),(u,s) \in I \times I}},[a_{tu}b_{rs}]_{(t,r),(u,s) \in I \times I} \Big\rangle_{\frak{M}_{p,cb}^{I \times I},\frak{R}_{p,cb}^{I \times I}}\\
   &= a_{ij}b_{ij}\\
   &= [A*B]_{ij}.
\end{align*}
Thus we conclude that if $A,B \in \Mat_I^{\fin}$ we have
$\eta_{*}(A\ot B)=A*B$. By Proposition 3.1 and by density of
$\Mat_I^{\fin} \ot \Mat_I^{\fin}$ in $\frak{R}_{p,cb}^I \otp
\frak{R}_{p,cb}^I$, we deduce that the map
$$
\begin{array}{cccccc}
 \Mat_I^{\fin} \ot \Mat_I^{\fin}     &     \longrightarrow     &  \frak{R}_{p,cb}^{I \times I}   &  \xra{\eta_{*}}& \frak{R}_{p,cb}^I \\
  A \ot B                        &       \longmapsto       &         A \ot B                   &     \longmapsto  &       A*B       \\
\end{array}
$$
admits a unique bounded extension from $ \frak{R}_{p,cb}^I \otp
\frak{R}_{p,cb}^I$ into  $\frak{R}_{p,cb}^I$. Moreover, this map is
completely contractive. Finally, we complete the proof by a
straightforward approximation argument with Lemma \ref{lemma approx
in Rp}.
\end{proof}

Now, we will give a more simple proof of this theorem. It is easy to
see that $\eta$ induces a completely isometric map $\eta\colon
S_{p}^I \to S_{p}^I\big(S_{p}^I\big)$. Moreover, by the computation
(\ref{calcul Meta}), its range is clearly 1-completely complemented
by $M_E\colon S_{p}^I\big(S_{p}^I\big) \to
S_{p}^I\big(S_{p}^I\big)$. We denote by $\eta^{-1}\colon
\eta\big(S_{p}^I\big(S_{p}^I\big)\big)\to S_{p}^I$ the inverse map
of $\eta$. For all $B\in \eta\big(S_{p}^I(S_{p}^I)\big)$, we have
$\eta^{-1}(B)=\big[b_{(r,r),(s,s)}\big]_{r,s \in I}$. Finally, for
all $i,j,k,l \in I$ we have
\begin{align*}
  \eta M_A\eta^{-1}M_E(e_{ij}\ot e_{kl})
   &= \delta_{ik}\delta_{jl}\eta M_A\eta^{-1}(e_{ij}\ot e_{kl}) \\
   &=  \delta_{ik}\delta_{jl}\eta M_A \eta^{-1}\Big([\delta_{it}\delta_{ju}\delta_{kr}\delta_{ls}]_{(t,r),(u,s) \in I \times I}\Big)\\
   &=  \delta_{ik}\delta_{jl}\eta M_A\Big([\delta_{ir}\delta_{js}\delta_{kr}\delta_{ls}]_{r,s \in I}\Big)\\
   &=  \delta_{ik}\delta_{jl}a_{ij}\eta \Big([\delta_{ir}\delta_{js}\delta_{kr}\delta_{ls}]_{r,s \in I}\Big)\\
   &= \delta_{ik} \delta_{jl} a_{ij} e_{ij} \ot e_{kl}\\
   &= M_{\eta(A)}(e_{ij}\ot e_{kl})
\end{align*}
where we have used the computation (\ref{calcul Meta}) in the last
equality.

Hence we have the following commutative diagram
$$
\xymatrix @R=2cm @C=3cm{
    S_{p}^I\big(S_{p}^I\big) \ar[r]^{M_{\eta(A)}} \ar[d]_{M_E}  & S_{p}^I\big(S_{p}^I\big)  \\
     \eta\big(S_{p}^I(S_{p}^I)\big)\ar[d]_{\eta^{-1}}&\\
    S_{p}^I \ar[r]_{M_{A}} & S_{p}^I. \ar[uu]_{\eta}}
$$
We conclude with an argument similar to that used in the proof of
Theorem \ref{Rp is an algebra for Schur product}.

\section{Isometric multipliers}
The next result is the noncommutative version of a theorem of
Parrott \cite{Par} and Strichartz \cite{Str} which states that every
isometric Fourier multiplier on $L_p(G)$ for $ 1\leq p\leq\infty$,
$p\not=2$, is a scalar multiple of an operator induced by a
translation.

\begin{thm}
Suppose $1 \leq p \leq\infty$, $p\not=2$. An isometric Schur
multiplier on $S_p^I$ is defined by a matrix $[a_ib_j]$ with
$a_i,b_j \in \mathbb{T}$.
\end{thm}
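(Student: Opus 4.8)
The plan is to recover the multiplicative rank-one shape $a_{ij}=a_ib_j$ from isometry constraints on the $2\times 2$ compressions of $A$, the only place where the hypothesis $p\neq 2$ plays a role. As a preliminary I would note that every entry is unimodular: since $M_A(e_{ij})=a_{ij}e_{ij}$ and $\norm{e_{ij}}_{S_p^I}=1$, the isometry of $M_A$ forces $|a_{ij}|=1$ for all $i,j\in I$.

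Next I would localise to combinatorial rectangles. Fix distinct rows $i,k$ and distinct columns $j,l$. Any matrix $B$ supported on the four positions $(i,j),(i,l),(k,j),(k,l)$ satisfies $B=PBQ$ for the rank-two coordinate projections $P$ onto $\vect(e_i,e_k)$ and $Q$ onto $\vect(e_j,e_l)$, and its nonzero singular values coincide with those of the associated $2\times 2$ matrix; hence $\norm{B}_{S_p^I}$ equals the $S_p^{\{1,2\}}$-norm of that matrix. Since Schur multiplication preserves supports, $M_A$ sends such a $B$ to a matrix with the same support, and the induced map is exactly the $2\times 2$ Schur multiplier with symbol $\begin{pmatrix}a_{ij}&a_{il}\\a_{kj}&a_{kl}\end{pmatrix}$. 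It is therefore an isometry of $S_p^{\{1,2\}}$.

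The heart of the matter is then the $2\times 2$ statement: a Schur multiplier with unimodular symbol $\begin{pmatrix}\a&\b\\\g&\d\end{pmatrix}$ that is isometric on $S_p^2$, $p\neq 2$, must satisfy $\a\d=\b\g$. Since Schur multiplication by a rank-one unimodular matrix $[r_is_j]$ acts as $B\mapsto \mathrm{diag}(r_i)\,B\,\mathrm{diag}(s_j)$, it is an isometric bijection of $S_p^2$; composing with it preserves isometry and normalises three entries of the symbol to $1$, the remaining one becoming $\omega=\a\d(\b\g)^{-1}\in\T$. Thus it suffices to show that $M\colon\begin{pmatrix}a&b\\c&d\end{pmatrix}\mapsto\begin{pmatrix}a&b\\c&\omega d\end{pmatrix}$ is isometric only when $\omega=1$. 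Testing on $B=\begin{pmatrix}1&1\\1&1\end{pmatrix}$, with $\norm{B}_{S_p^2}=2$, I would compute the singular values of $M(B)=\begin{pmatrix}1&1\\1&\omega\end{pmatrix}$ to be $\sqrt{2\pm 2c}$, where $c=|\cos(\theta/2)|$ and $\omega=e^{i\theta}$, so that $\norm{M(B)}_{S_p^2}^{\,p}=2^{p/2}\big((1+c)^{p/2}+(1-c)^{p/2}\big)$. The function $g(c)=(1+c)^{p/2}+(1-c)^{p/2}$ is strictly monotone on $[0,1]$ when $p\neq 2$ (increasing for $p>2$, decreasing for $1\le p<2$; the case $p=\infty$ is handled directly through the top singular value $\sqrt{2+2c}$), so the equality $g(c)=g(1)=2^{p/2}$ forces $c=1$, i.e. $\omega=1$, i.e. $\a\d=\b\g$. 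This strict monotonicity, which degenerates precisely at $p=2$, is the one genuinely analytic ingredient and the step I expect to be the main obstacle.

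Finally I would assemble the global factorisation. The rectangle conditions give $a_{ij}a_{kl}=a_{il}a_{kj}$ for all $i,k,j,l\in I$. Fixing a base point $(i_0,j_0)$ and setting $a_i:=a_{ij_0}$ and $b_j:=a_{i_0j}\,a_{i_0j_0}^{-1}$, the identity $a_{ij}a_{i_0j_0}=a_{ij_0}a_{i_0j}$ (the rectangle condition on rows $i,i_0$ and columns $j,j_0$, trivial when $i=i_0$ or $j=j_0$) yields $a_{ij}=a_ib_j$, with $a_i,b_j\in\T$ because $\T$ is a group. This gives the asserted form and completes the proof.
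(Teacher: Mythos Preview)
Your argument is correct and takes a genuinely different route from the paper. The paper first observes that an isometric Schur multiplier is onto (each $c_{ij}\neq 0$, hence every $e_{ij}$ lies in the range) and then invokes Arazy's classification of surjective isometries of $S_p^I$, which says any such map is $A\mapsto UAV$ or $A\mapsto UA^TV$ for unitaries $U,V$; a direct matrix-unit calculation then forces $U,V$ to be diagonal and rules out the transpose case. Your proof is entirely elementary and self-contained: you localise to $2\times 2$ blocks, normalise by rank-one unimodular multipliers, and isolate the role of $p\neq 2$ in the strict monotonicity of $c\mapsto (1+c)^{p/2}+(1-c)^{p/2}$ on $[0,1]$, which degenerates exactly at $p=2$. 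The paper's approach is shorter once Arazy is granted and situates the result within the general isometry theory of Schatten classes; your approach avoids that black box, makes the $p=2$ obstruction completely transparent, and would adapt readily to settings where an Arazy-type theorem is unavailable.
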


\begin{proof}
Suppose that $M_C$ is an isometric Schur multiplier on  the Banach
space $S_p^I$ defined by a matrix $C$. First, we observe that $M_C$
is onto. Indeed, for all $i,j\in I$, we have
$M_C(e_{ij})=c_{ij}e_{ij}$. Then $c_{ij}\not=0$ since $M_C$ is
one-to-one. Consequently $e_{ij}$ belongs to the range of $M_{C}$.
By density, we conclude that $M_C$ is onto.

Now we use the theorem of Arazy \cite{Ara} which describes the onto
isometries on $S_p^I$. Then there exists two unitaries $U=[u_{ij}]$
and $V=[v_{ij}]$ of $B\big(\ell_2^I\big)$ satisfying for all $A \in
S_p^I$
$$
C*A=UAV \ \ \ \text{or} \ \ \ C*A=UA^{T}V.
$$

Examine the first case, we have for all $k,l\in I$
$$
Ue_{kl}V=C*e_{kl}.
$$
Hence, for all $i,j\in I$, we have the equality
$$
[Ue_{kl}V]_{ij}=[C*e_{kl}]_{ij}.
$$
Since
$$
[Ue_{kl}V]_{ij}=u_{ik}v_{lj}
$$
we have
$$
u_{ik}v_{lj}=\left\{
\begin{array}{cl}
  c_{kl}  & \text{if}\ i=k \ \text{and}\  j=l        \\
      0   &  \text{if}   \ i\not=k\ \text{or if}\ j\not=l.    \\
\end{array}\right.
$$
Then $u_{kk}v_{ll}=c_{kl}$. Each $c_{kl}$ is non null since the
image of each $e_{kl}$ by the map $M_C$ cannot be null. Then, for
all $k$ and all $l$, we have $u_{kk}\not=0$ and $v_{ll}\not=0$. And
for $i\not=k$, we have $u_{ik}v_{ll}=0$. Then if $i\not=k$, we have
$u_{ik}=0$. Now if $j\not=l$, we have $u_{kk}v_{lj}=0$. Then if
$j\not=l$, we have $v_{lj}=0$. Finally, for all $i,j\in I$, we
define the complex numbers $a_i=u_{ii}$ and $b_j=v_{jj}$. Since the
diagonal matrices $U$ and $V$ are unitaries, we have $a_{i},b_{j}\in
\mathbb{T}$. Thus we have the required form.

Examine the second case. We have for all $k,l\in I$
$$
Ue_{lk}V=C*e_{kl}.
$$
We deduce that, for all $i,j,k,l \in I$, we have
$$
[Ue_{lk}V]_{ij}=[C*e_{kl}]_{ij}.
$$
Since
$$
[Ue_{lk}V]_{ij}= u_{il}v_{kj}
$$
we obtain $u_{kl}v_{kl}=c_{kl}$ and $u_{il}v_{kj}=0$ if $i\not=k$ or
if $j\not=l$. Each $c_{kl}$ is non null since the image of each
$e_{kl}$ by the map $M_C$ cannot be null. Then for all $k,l$ we have
$u_{kl}\not=0$ and $v_{kl}\not=0$. Thus the second case is absurd
$\big($if $\card(I)>1\big)$.

The converse is straightforward.
\end{proof}

\begin{remark}
It is easy to see that an isometric Schur multiplier on $S_2^I$ is
defined by a matrix $[a_{ij}]$ with $a_{ij}\in \mathbb{T}$.
\end{remark}

The next result is the noncommutative version of a theorem of
Fig\`a-Talamanca \cite{Fig} which states that the space of bounded
Fourier multipliers is the closure in the weak operator topology of
the span of translation operators.

\pagebreak[2]
\begin{thm}
Suppose $1 \leq p < \infty $.
\begin{enumerate}
  \item The space $\frak{M}_{p,cb}^I$ of completely bounded Schur multipliers on $S_p^I$ is the closure of the
span of isometric Schur multipliers in the weak* topology and in the
weak operator topology.
  \item The space $\frak{M}_p^I$ of bounded Schur multipliers on $S_p^I$ is the closure of the
span of isometric Schur multipliers in the weak* topology and in the
weak operator topology.
\end{enumerate}
\end{thm}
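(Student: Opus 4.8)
The plan is to show that the linear span $\mathcal S$ of the isometric Schur multipliers is dense in $\frak{M}_{p,cb}^I$ (resp. $\frak{M}_p^I$) for both topologies, the reverse inclusions being the routine part. The crux is the density, which I would reduce to showing that each elementary multiplier $M_{e_{ij}}$ already lies in $\mathcal S$. First I would record that for unimodular sequences $\lambda=(\lambda_i)$ and $\mu=(\mu_j)$ the matrix $[\lambda_i\mu_j]$ yields an isometric multiplier: writing $D_\lambda,D_\mu$ for the associated diagonal unitaries of $B(\ell_2^I)$ one has $[\lambda_i\mu_j]*A=D_\lambda AD_\mu$, and conjugation by unitaries is completely isometric on $S_p^I$ for every $p$, so $M_{[\lambda_i\mu_j]}$ is completely isometric and in particular lies in $\frak{M}_{p,cb}^I$. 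By the isometric-multiplier theorem above (and the remark for $p=2$) these are, for $p\ne2$, exactly the isometric multipliers and, for $p=2$, among them; in all cases they belong to $\mathcal S$.

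Next I would decompose $M_{e_{ij}}$ using the elementary fact that a coordinate vector is an average of two unimodular ones. With $\mathbf 1=(1,1,\dots)$ and $\varepsilon^{(i)}$ equal to $1$ at the index $i$ and to $-1$ elsewhere, we have $e_i=\tfrac12(\mathbf 1+\varepsilon^{(i)})$, both sequences being unimodular even for infinite $I$. Applying this to rows and columns expresses $e_{ij}=e_ie_j^{T}$ as a combination of four unimodular rank-one matrices $[\lambda_k\mu_l]$, and by linearity of $A\mapsto M_A$ the multiplier $M_{e_{ij}}$ becomes a finite linear combination of isometric multipliers. Hence $\mathcal S$ contains $M_F$ for every $F\in\Mat_I^{\fin}$.

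To finish the density, fix $M_A\in\frak{M}_{p,cb}^I$. By the truncation estimates of \S2 the net $M_{\mathcal T_J(A)}$ satisfies $\bnorm{M_{\mathcal T_J(A)}}_{\frak{M}_{p,cb}^I}\le\norm{M_A}_{\frak{M}_{p,cb}^I}$, converges strongly (hence in the weak operator topology) to $M_A$, and, thanks to the uniform bound, converges weak* as well. Each $M_{\mathcal T_J(A)}$ is finitely supported and so lies in $\mathcal S$, so $M_A$ belongs to both closures; the bounded case is identical with the corresponding truncation estimate. For the reverse inclusions note that $\mathcal S\subseteq\frak{M}_{p,cb}^I$, which is weak*-closed in $CB(S_p^I)$, and that a weak-operator limit of Schur multipliers is again a Schur multiplier, since the pairing with $e_{ij}\ot e_{kl}$ vanishes for $(i,j)\ne(k,l)$ at every stage and hence in the limit.

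The genuinely new input is the decomposition of $M_{e_{ij}}$ into isometric multipliers; the rest is assembled from the duality and truncation results of \S2. I expect the only delicate point to be the weak-operator reverse inclusion in the completely bounded case: a weak-operator limit of the approximants is a priori only a \emph{bounded} multiplier, so one must take the closure inside $CB(S_p^I)$, or equivalently exploit that the truncations are uniformly cb-bounded so that the limit is already controlled by the weak* topology, in which $\frak{M}_{p,cb}^I$ is closed.
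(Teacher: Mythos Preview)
Your proof is correct, and the computational core is identical to the paper's: both rest on the identity
\[
4e_{i_0j_0}=[\mathbf 1_i\mathbf 1_j]+[\mathbf 1_i\varepsilon^{(j_0)}_j]+[\varepsilon^{(i_0)}_i\mathbf 1_j]+[\varepsilon^{(i_0)}_i\varepsilon^{(j_0)}_j],
\]
i.e.\ $e_i=\tfrac12(\mathbf 1+\varepsilon^{(i)})$ applied in each factor. The packaging differs. The paper argues by duality: it takes $C\in\frak{R}_{p,cb}^I$ in the annihilator of the isometric multipliers, evaluates the pairing against the four multipliers $M_{[a_ib_j]}$, $M_{[a_ib'_j]}$, $M_{[a'_ib_j]}$, $M_{[a'_ib'_j]}$, and sums to get $4c_{i_0j_0}=0$; weak*-density follows by Hahn--Banach, and weak-operator density because the weak* topology is finer. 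You instead read the same identity forwards, concluding that $M_{e_{ij}}$ (hence every $M_F$ with $F\in\Mat_I^{\fin}$) already lies in the \emph{linear span} $\mathcal S$, and then invoke the truncation approximation from \S2 to reach an arbitrary $M_A$.

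Your route is slightly more elementary---it avoids the predual description and yields the mildly sharper statement that finitely supported multipliers sit in $\mathcal S$ itself, not merely in its closure---while the paper's route is shorter once the duality $\big(\frak{R}_{p,cb}^I\big)^*=\frak{M}_{p,cb}^I$ is in hand. Your caution about the weak-operator reverse inclusion in the cb case is well placed; the clean resolution is the one you indicate: take the closure inside $CB(S_p^I)$, where any limit is automatically completely bounded and, by the $e_{ij}\ot e_{kl}$ test, a Schur multiplier.
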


\begin{proof}
We will only prove the part 1. The proof of the part 2 is similar.

It is easy to see that an isometric Schur multiplier on $S_{p}^I$ is
completely isometric. This fact allows us to consider the span of
isometric Schur multipliers in $\frak{M}_{p,cb}^I$. Let $C$ be a
matrix of $\frak{R}_{p,cb}^I$. Suppose that $C$ belongs to the
orthogonal of the set of isometric Schur multipliers. Thus, we have
for any isometric multiplier $M_{[a_ib_j]}$ (with $a_i, b_j \in
\mathbb{T}$)
\begin{align*}
0 &=  \Big\langle
M_{[a_ib_j]},C\Big\rangle_{\frak{M}_{p,cb}^I,\frak{R}_{p,cb}^I}\\
  &=  \lim_{J}\sum_{i,j\in J} a_i b_j c_{ij}.
\end{align*}
Let $i_0,j_0$ be elements of $I$. Now, we choose the $a_i$'s,
$b_j$'s, $a'_i$'s and $b'_j$'s such that $a_i=b_j=1$ for all $i,j\in
I$, $a'_i=-1$ if $i \not = i_0$, $a'_{i_0}=1$, $b'_j=-1$ if $j \not
= j_0$ and $b'_{j_0}=1$. Then, we have
\begin{align*}
0  &=  \lim_{J}\sum_{i,j\in J}a_{i}b_{j}c_{ij}+\lim_{J}\sum_{i,j\in J}a_{i}b'_{j}c_{ij}+\lim_{J}\sum_{i,j\in J}a'_{i}b_{j}c_{ij}+\lim_{J}\sum_{i,j\in J}a'_{i}b'_{j}c_{ij} \\
   &= \lim_{J}\sum_{i,j\in J} (a_i+a'_i)(b_j+b'_j)c_{ij} \\
   &=  4c_{i_0j_0}.
\end{align*}
Hence $c_{i_0j_0}=0$. It follows that $C=0$. Then, we deduce that
the space $\frak{M}_{p,cb}^I$ of completely bounded Schur
multipliers is the closure of the span of isometric Schur
multipliers in the weak* topology. Moreover, this topology is more
finer that the weak operator topology. Thus, we have proved the
theorem.
\end{proof}


\subsection*{Acknowledgment}
The author is grateful to his thesis adviser Christian Le Merdy for
support and advice. He also thanks Jean-Christophe Bourin, Pierre
Fima and Eric Ricard for fruitful discussions and the anonymous
referee for useful comments.

\end{document}